\pdfoutput=1
\documentclass[twoside,leqno,twocolumn]{article}
\usepackage{ltexpprt}

\def\final{1}
\usepackage{pdfsync}
\usepackage{color}
\usepackage{xspace,wrapfig,subfigure}
\usepackage{url}
\usepackage{graphicx}
\usepackage{amsmath, amsthm, amssymb}

\usepackage{amsfonts}

\usepackage{paralist}

\usepackage{enumerate}

\ifnum\final=0
\newcommand{\mynote}[1]{\marginpar{\tiny\sf #1}}
\else
\newcommand{\mynote}[1]{}
\fi

\usepackage{theomac}
\newtheoremstyle{mytheoremstyle} 
{\topsep}                    
{\topsep}                    
{\itshape}                   
{}                           
{\scshape}                   
{.}                          
{.5em}                       
{}  
\theoremstyle{mytheoremstyle}
\newtheoremWithMacro{theorem}{Theorem}[section]

\newtheorem{lemma}{Lemma}[section]

\newtheorem{proposition}{Proposition}[section]

\newtheorem{Conjecture}{Conjecture}[section]

\newcommand{\alg}[2]{\begin{center}\fbox{\begin{minipage}{0.99\columnwidth}{\begin{center}\underline{\textsc{#1}}\end{center}{#2}}\end{minipage}}\end{center}}

\def \E {\mathbb{E}}

\newcommand{\e}{\mathrm{e}}
\newcommand{\da}{\mathrm{d}a}

\newcommand{\ds}{\mathrm{d}s}
\newcommand{\dt}{\mathrm{d}t}

\newcommand{\ddelta}{\mathrm{d}\delta}

\newcommand{\figref}[1]{Figure \ref{fig:#1}}
\newcommand{\lemref}[1]{Lemma \ref{lem:#1}}
\newcommand{\propref}[1]{Proposition \ref{prop:#1}}
\newcommand{\theoref}[1]{Theorem \ref{theo:#1}}
\newcommand{\secref}[1]{Section \ref{sec:#1}}

\renewcommand{\paragraph}{\subsection}
\begin{document}

\title{\Large The Rigidity Transition in Random Graphs}
\author{Shiva Prasad Kasiviswanathan\thanks{IBM T.\ J.\ Watson Research Center, Yorktown Heights. Work done while the author was as a postdoc at Los Alamos National Laboratory, kasivisw@gmail.com} \and Cristopher Moore\thanks{Santa Fe Institute and University of New Mexico, moore@santafe.edu} \and Louis Theran\thanks{Temple University, theran@temple.edu.  Supported by CDI-I grant DMR 0835586 to I. Rivin and M. M. J. Treacy.}}

\date{}

\maketitle

\begin{abstract} \small\baselineskip=9pt
As we add rigid bars between points in the plane, at what point is there a giant (linear-sized) rigid component,
which can be rotated and translated, but which has no internal flexibility?  If the points are generic, this depends only on the combinatorics
of the graph formed by the bars.  We show that if this graph is an Erd\H{o}s-R\'enyi random graph $G(n,c/n)$, then there exists a sharp
threshold for a giant rigid component to emerge. For $c < c_2$, w.h.p. all rigid components span one, two, or three vertices,
and when $c > c_2$, w.h.p. there is a giant rigid component.  The constant $c_2 \approx 3.588$ is the threshold for $2$-orientability,
discovered independently by Fernholz and Ramachandran and Cain, Sanders, and Wormald in SODA'07.  We also give quantitative bounds on
the size of the giant rigid component when it emerges, proving that it spans a $(1-o(1))$-fraction of the vertices in the $(3+2)$-core. Informally, the $(3+2)$-core is maximal induced subgraph obtained by starting from the $3$-core and then inductively adding vertices with $2$ neighbors in the graph obtained so far.

\end{abstract}

\section{Introduction} \label{sec:intro}

Imagine we start with a set of $n$ points allowed to move freely in the Euclidean plane and add fixed-length bars between pairs of the points, one at a time.  Each bar fixes the distance between its endpoints, but otherwise does not constrain the motion of the points.

Informally, a maximal subset of the points which can rotate and translate, but otherwise has no internal flexibility is called a \emph{rigid component}. As bars are added, the set of rigid components may change, and this change can be very large: the addition of a single bar may cause $\Omega(n)$ many rigid components spanning $O(1)$ points to merge into a single component spanning $\Omega(n)$ points.

We are interested in the following question: \emph{If we add bars uniformly at random at what point does a giant (linear-sized) rigid component emerge and what is its size?}  Our answers are: (1) there is a phase transition from all components having at most three points to a unique giant rigid component when about $1.794n$ random bars are added; (2) when the linear-sized rigid component emerges, it contains at least nearly all of the $3$-core of the graph induced by these bars.

One of the major motivations for studying this problem comes from physics, where these planar bar-joint frameworks (formally described below)  are used to understand the physical properties of systems such as bipolymers and glass networks (see, e.g., the book by Thorpe~\emph{et al.}\ \cite{ThJaChRa99}).

A sequence of papers~\cite{JaTh95,jacobs:hendrickson:PebbleGame:1997a,thorpe:rigidity:glasses:2002,chubynsky2002rigid,ThJaChRa99} studied the emergence of large rigid components in glass networks generated by various stochastic processes, with the edge probabilities and underlying topologies used to model the temperature and chemical composition of the system.  An important observation that comes from these results is that {\em very large} rigid substructures emerge {\em very rapidly}. Of particular relevance to this paper are the results of~\cite{rivoire2006exactly,moukarzel2003rigidity,ThJaChRa99}. Through numerical simulations they show that that there is a sudden emergence of a giant rigid component in the $3$-core of a $G(n,p)$ random graph. The simulations of Rivoire and Barr{\'e} (see Figure 1 in~\cite{rivoire2006exactly}) also show that this phase transition occurs when there are about $1.794n$ edges in the $3$-core. Our results confirm these observations {\em theoretically}.

\paragraph{The Planar Bar-Joint Rigidity Problem.}  The formal setting for the problem described above is the well-studied \emph{planar bar-joint framework} model from rigidity theory (see, e.g., \cite{graver:servatius:rigidityBook:1993} for an overview and complete definitions).  A \emph{bar-joint framework} is a structure made of \emph{fixed-length bars} connected by \emph{universal joints} with full rotational freedom at their endpoints.  The allowed continuous motions preserve the lengths and connectivity of the bars.  A framework is \emph{rigid} if the only allowed motions are rotations or translations (i.e., Euclidean motions); it is \emph{minimally rigid} if it is rigid but ceases to be so if any bar is removed. If the framework is not rigid, it decomposes uniquely into \emph{rigid components}, which are the inclusion-wise maximal rigid sub-frameworks.  Figure \ref{fig:component-examples} shows examples of rigid components.

\begin{figure}[htbp]
\centering

\subfigure[]{\includegraphics[width=0.35\textwidth]{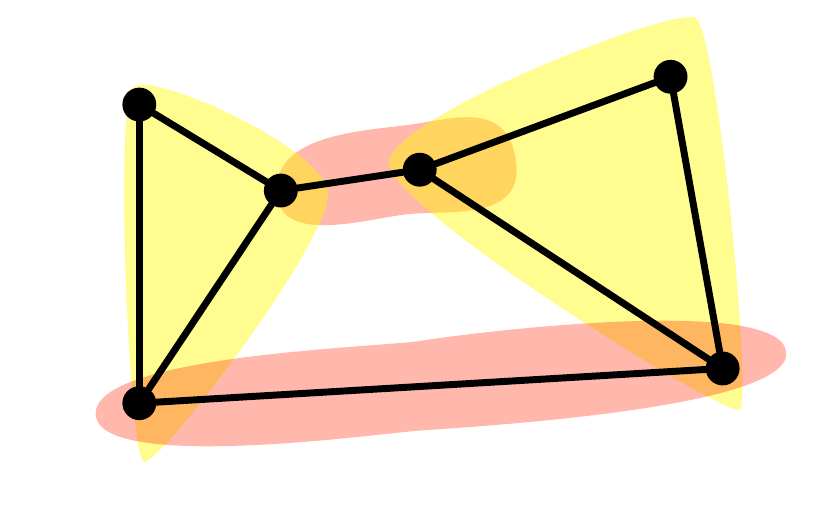}}\vspace{0.1 in}
\subfigure[]{\includegraphics[width=0.35\textwidth]{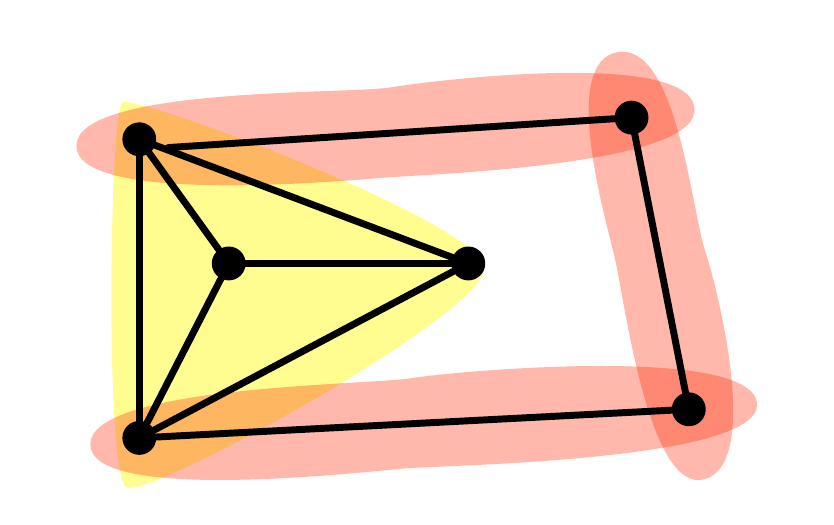}}
\caption{Examples of rigid components: (a) all the rigid components have size $2$ or $3$; (b) the indicated induced $K_4$ is rigid, but has one more edge than is required for minimal rigidity.}
\label{fig:component-examples}
\vspace{-0.1 in}
\end{figure}

The combinatorial model for a bar-joint framework is a simple graph $G=(V,E)$ with $n$ vertices representing the joints,
and $m$ edges representing the bars.
A remarkable theorem of Maxwell-Laman \cite{L70,M64} says that rigidity for a \emph{generic}\footnote{Genericity
is a subtle concept that is different than the standard assumption of \emph{general position} that appears in the
computational geometry literature.  See \cite{sliders} for a more detailed discussion.} framework (and {\em almost all}
frameworks are generic) is determined by the underlying graph alone. The graph-theoretic condition characterizing minimal
rigidity is a \emph{hereditary sparsity count}; for minimal rigidity the graph $G$ should have $m=2n-3$ edges, and every
subgraph induced by $n'$ vertices in $G$ should have at most $2n'-3$ edges.
Therefore, by the Maxwell-Laman Theorem generic rigidity in the plane becomes a \emph{combinatorial} concept,
and from now on we will consider it as such. Full definitions are given in \secref{rigid-prelim}.

\paragraph{Contributions.} With this background, we can restate our main question as follows:
\emph{What is the behavior of rigid components in a generic framework with its combinatorics given by an
Erd\H{o}s-R\'enyi random graph $G(n,p)$}?  Our main result is the following:
\begin{theorem}[\maintheorem][Main Theorem]\label{theo:main}
For any constant $c>0$ the following holds,
\begin{itemize}
\item If $c < c_2$, then, w.h.p., all rigid components in $G(n,c/n)$ span at most three vertices and
\item If $c > c_2$, then, w.h.p., there is a unique giant rigid component in
$G(n,c/n)$ spanning a $(1-o(1))$ fraction of the $(3+2)$-core.
\end{itemize}
\end{theorem}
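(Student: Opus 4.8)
The plan is to reduce everything to combinatorics via Maxwell--Laman and then combine first-moment estimates with the structural picture of $G(n,c/n)$ near the $2$-orientability threshold. The facts I will use: a vertex set $C$ spans a rigid subframework iff $G[C]$ has a spanning Laman subgraph; a rigid graph on $k \geq 2$ vertices has at least $2k-3$ edges, with equality iff it is minimally rigid; rigid components are the inclusion-maximal rigid vertex sets and pairwise share at most one vertex; and rigidity is preserved both by adding an edge and by a Henneberg $0$-extension (attaching a new vertex of degree exactly $2$). In particular, any graph built from a triangle by repeatedly ``adding a vertex that has at least two already-present neighbours'' --- using two of its edges for a $0$-extension and the rest as redundant chords --- is rigid.

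For the first item ($c < c_2$) it suffices to prove that w.h.p.\ $G(n,c/n)$ has no subgraph on $k \geq 4$ vertices with at least $2k-3$ edges, since by the edge-count necessity this forces every rigid component to span at most three vertices. For each fixed $k \geq 4$ the expected number of $k$-subsets inducing $\geq 2k-3$ edges is $O(n^{3-k}) = o(1)$, which handles all bounded $k$. For large $k$ I will use that below $c_2$ the maximum subgraph density of $G(n,c/n)$ is w.h.p.\ bounded away from $2$, which follows from the precise characterization of $c_2$ by Fernholz--Ramachandran and by Cain--Sanders--Wormald: for fixed $c < c_2$ it is w.h.p.\ at most $2-\delta$ for some $\delta=\delta(c)>0$, so every subgraph on more than $3/\delta$ vertices has fewer than $(2-\delta)k < 2k-3$ edges. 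The two ranges together exclude all $k \geq 4$.

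For the second item ($c > c_2$) the same threshold, read from above, says the $3$-core of $G(n,c/n)$ is non-empty with density strictly above $2$ w.h.p. The heart of the argument is to show such a $3$-core is essentially rigid: it contains a rigid subgraph spanning a $(1-o(1))$ fraction of its vertices. Since density $2$ is exactly the edge count needed for a spanning Laman subgraph, and the $3$-core is distributed as a random graph with its (concentrated, all-$\geq 3$) degree sequence, I would establish this by running a rigidity pebble game on the configuration model of the $3$-core in uniformly random edge order and following its state --- the numbers of accepted and redundant edges and the sizes of the emerging rigid components --- with Wormald's differential-equation method, in close analogy with the orientability analyses; the transition is precisely the $3$-core density crossing $2$, i.e.\ $c = c_2$, and the $o(1)$ loss is the vanishing fraction of vertices that end up in small rigid components. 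Given a linear-size rigid subgraph $R$ of the $3$-core, I then enlarge it over the rest of the $(3{+}2)$-core by iterated $0$-extensions: every vertex with two already-absorbed neighbours gets absorbed, so the absorbed set is the two-neighbour bootstrap closure of $R$; since $R$ omits only $o(n)$ vertices of the $3$-core and, for $c > c_2$ (hence bounded away from the $3$-core threshold), such an omission propagates to only $o(n)$ further vertices, this closure covers a $(1-o(1))$ fraction of $V((3{+}2)\text{-core})$ and lies in one rigid component. Finally, uniqueness follows from the first-moment bound above, valid for every $c$: w.h.p.\ every rigid subgraph on $\geq 4$ vertices has size at least $n^{1-\epsilon}$, and at most one of these is linear-sized; one checks moreover that this giant component agrees with the $(3{+}2)$-core up to $o(n)$ vertices, since it lies in the $2$-core and, being a large rigid component of $G(n,c/n)$, its vertices are reachable from the $3$-core by $0$-extensions.

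The main obstacle is the rigidity analysis of the $3$-core. Pinning the rigidity transition to the $2$-orientability threshold $c_2$ --- rather than to some other constant --- amounts to showing that the generic rigidity matroid of a random min-degree-$3$ graph attains rank $2n - o(n)$ exactly when its edge density passes $2$, and I expect this to require the full differential-equation tracking of a pebble game (or of an equivalent matroid-union / tree-packing process) to control both the location of the threshold and the $(1-o(1))$ bound. The first item and the $0$-extension absorption step are comparatively routine.
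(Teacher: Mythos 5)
Your high-level picture is right --- pin the transition to the $2$-orientability threshold $c_2$, get a rigid seed in the $3$-core, bootstrap it over the $(3{+}2)$-core by $0$-extensions --- but several of the steps that carry the real weight are either gapped or would not work as stated.

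For $c < c_2$, your two-regime argument needs w.h.p.\ \emph{strict} density control: you invoke ``max subgraph density $\le 2-\delta$'' for a fixed $\delta>0$ to handle large $k$, but Fernholz--Ramachandran and Cain--Sanders--Wormald give $2$-orientability, i.e.\ density $\le 2$, not a uniform gap below $2$. Extracting $\delta>0$ from their proofs is plausible but is an extra argument you have not supplied. The paper sidesteps this entirely: conditional on a giant rigid component, sprinkling $\Theta(1)$ further random edges lands $\ge 4$ of them inside the component's span with constant probability, violating $(2,0)$-sparsity and hence $2$-orientability of a graph still distributed as $G(n,(c+o(1))/n)$, which is a contradiction. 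Relatedly, your uniqueness claim does not follow from the first-moment bound: ruling out rigid components of intermediate size says nothing about whether there are two disjoint linear-sized ones. The paper needs a separate argument (edge-ordering process plus the ``three edges merge two blocks'' lemma) to show that two giants cannot coexist at the random time $m$.

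For $c>c_2$, the step you flag as the ``main obstacle'' --- tracking a rigidity pebble game on the $3$-core by Wormald's DE method --- is precisely the step I would expect to fail as proposed. The state of the pebble game is the whole rigid-component partition, which is not a bounded-dimensional vector, and rigid components can merge explosively (the paper's Figure 3 gives a family where one edge rigidifies everything), so the Lipschitz hypotheses of the DE method are not available. The actual proof never runs a rigidity algorithm: it runs a $2$-orientation heuristic (FR moves / Henneberg moves on the configuration model), whose relevant state is just the evolving degree sequence of the remaining core, analyzes the induced rounds as subcritical branching processes via a scalar ODE for the truncated-Poisson parameter, and only at the very end converts ``almost $(2,0)$-spanning'' into ``almost Laman-spanning'' through a purely combinatorial lemma plus a light sprinkling step. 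That reduction from rigidity to $2$-orientability is the key idea your proposal is missing; without it the DE analysis does not have a tractable state to track.
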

The $(3+2)$-core of $G(n,c/n)$ is the maximal induced subgraph obtained by starting from the $3$-core
and then inductively adding vertices with $2$ neighbors in the graph obtained so far (see \secref{random-prelim} for the full definition). The constant $c_2\approx 3.588$ is the threshold for $2$-orientability discovered independently by Fernholz and Ramachandran~\cite{FR07} and Cain,
Sanders, and Wormald~\cite{CSW07}.  A graph $G$ is $2$-orientable if all its edges can be oriented so that each vertex has
out-degree\footnote{We use out-degree for consistency with the pebble game \cite{streinu:lee:pebbleGames:2008}.
In \cite{FR07,CSW07},
$2$-orientability is defined in terms of \emph{in-degree} exactly two orientations.} at most two.
There is a natural connection between the notions of $2$-orientability and minimal rigidity:
$2$-orientable graphs can be characterized using a counting condition that closely resembles the counting condition of minimal
rigidity (see \secref{rigid-prelim}).
This connection explains intuitively why the threshold for the emergence of giant rigid component should be at least $c_2$.
For example, if a giant rigid component emerges with $c < c_2$, then addition of another $o(n)$ random edges would create, with high probability,
a ``locally dense'' induced subgraph with more than twice the number of edges than vertices.
This prevents the graph from being $2$-orientable, contradicting the $2$-orientability threshold theorems of~\cite{FR07,CSW07}.

We prove the bound on the size of the giant rigid component by showing this following result.

\begin{theorem}[\almosttheorem] \label{theo:almost}
Let $c > c_2$ be a constant.  Then, w.h.p., there is a subgraph of the $(3+2)$-core such that the edges of
this subgraph can be oriented to give all but $O(\log^3 n\sqrt{n})$ of the vertices in the $(3+2)$-core an out-degree of two.
\end{theorem}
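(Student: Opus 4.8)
The plan is in three steps: strip the $(3+2)$-core down to its $3$-core, recast the orientation problem on the $3$-core as the absence of a ``sparse-boundary'' vertex set, and then verify that condition by first- and second-moment estimates on the configuration model describing the $3$-core. First I would reduce to the $3$-core. Write the $(3+2)$-core as the $3$-core together with vertices $v_1,\dots,v_k$ added in order, each $v_i$ having exactly two neighbours (its \emph{back-neighbours}) among the $3$-core and $v_1,\dots,v_{i-1}$. Orient, for every $i$, the two edges from $v_i$ to its back-neighbours away from $v_i$. Every edge of the $(3+2)$-core incident to an added vertex is either a back-edge of that vertex or a back-edge of a later vertex for which it is a back-neighbour, so this procedure orients \emph{all} edges touching added vertices, gives every added vertex out-degree exactly two, and leaves unoriented only edges inside the $3$-core. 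Taking these back-edges together with a suitable subset of the $3$-core's edges as our subgraph, it remains to orient a subgraph of the $3$-core so that all but $O(\log^3 n\sqrt n)$ of its $n_3\le n$ vertices have out-degree two.

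The next step is a reformulation. A spanning subgraph $F$ of a graph on $N$ vertices admits an orientation with all out-degrees at most two precisely when no subgraph $H$ of $F$ has more than $2|V(H)|$ edges, and any such orientation leaves at most $2N-e(F)$ vertices with out-degree below two; hence it suffices to find such an ``everywhere-sparse'' subgraph of the $3$-core with at least $2n_3-O(\log^3 n\sqrt n)$ edges. By the rank formula for the corresponding count matroid (equivalently, a defect form of Hall's theorem applied to the vertex--edge incidence structure with two tokens per vertex), this is possible if and only if
\[
 e^{\mathrm{inc}}_3(T)\ \ge\ 2|T|-O(\log^3 n\sqrt n)\qquad\text{for every }T\subseteq V_3,
\]
where $e^{\mathrm{inc}}_3(T)$ counts the $3$-core edges with an endpoint in $T$ and $V_3$ is the $3$-core's vertex set. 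The instance $T=V_3$ reads $m_3\ge 2n_3-O(\log^3 n\sqrt n)$, which is where $c>c_2$ enters, since then the $3$-core has density at least two (indeed $m_3-2n_3=\Theta(n)$); the substance is the intermediate range of $|T|$.

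Finally I would verify the displayed inequality, using that the $3$-core is distributed (given its degree sequence) as a uniform graph with that sequence, whose maximum degree is $O(\log n)$ w.h.p.\ and whose degree distribution is a truncated Poisson. Writing $2|T|-e^{\mathrm{inc}}_3(T)=e_3(T)-\sum_{v\in T}(d_3(v)-2)$, where $e_3(T)$ is the number of $3$-core edges inside $T$, one sees that a violating $T$ must induce more edges than a forest and must be dominated by degree-$3$ vertices (each degree-$d$ vertex it contains costs $d-2$ against the bound), so the extremal $T$ is essentially a set of degree-$3$ vertices whose induced subgraph has excess (edges minus vertices) about equal to the deficiency. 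For $|T|$ up to $n^{1/2}$ times a polylogarithmic factor, a first-moment computation in the configuration model shows the expected number of sets of size $\ell$ inducing $\ell+r$ edges among degree-$3$ vertices is $o(1)$ once $r$ exceeds a constant times $\log^3 n\,\sqrt n/\ell$, so no small set forces a larger deficiency. For $|T|$ between $\Omega(n^{1/2}\log^3 n)$ and $\Omega(n)$, I would use concentration: $\sum_{v\in T}d_3(v)$ and $e_3(T)$ are sums of contributions bounded by the maximum degree, their expectations satisfy the inequality with slack $\Omega(|T|^2/n)$ because the $3$-core has density $2+\Omega(1)$, and a union bound---taken over the much smaller family of plausible violators, namely sets concentrated on degree-$3$ vertices and cut off from the rest of the $3$-core by an atypically sparse edge boundary---together with Chernoff and Azuma estimates closes the argument. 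Summing the two ranges bounds the deficiency, hence the number of $3$-core, and therefore $(3+2)$-core, vertices that fail to receive out-degree two.

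\textbf{Main obstacle.}
The delicate point is the intermediate regime: a union bound over all $2^{n_3}=e^{\Omega(n)}$ subsets is far too wasteful against the deviation probabilities actually available, which are only of order $e^{-\Omega(n/\log^2 n)}$, so the argument must first confine a potential violator to a highly structured family and then show that the subgraph the $3$-core induces on its degree-$3$ vertices is, near $c_2$, only barely subcritical, so that its largest possible excess---and with it the worst deficiency---is $O(\log^3 n\sqrt n)$. Pinning down this threshold scale is exactly what puts the $\sqrt n$ into the statement.
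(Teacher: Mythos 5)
Your proposal takes a genuinely different route from the paper. The paper proves \theoref{almost} \emph{algorithmically}: it runs a pebble-game-style $2$-orientation algorithm driven by the Fernholz--Ramachandran configuration-model moves, models each ``round'' (a cascade triggered by peeling a degree-$3$ vertex) as a branching process, and proves subcriticality of that process via a system of differential equations tracking the degree distribution. The $O(\log^3 n\sqrt{n})$ arises from an algorithmic cutoff --- the algorithm is stopped once $\sqrt n$ vertices remain, and with $O(n)$ rounds each contributing $O(\log^2 n/\sqrt n)$ expected loose vertices, Markov's inequality yields the stated bound. Your proposal instead recasts the problem \emph{statically}: it reduces to the $3$-core (correctly, and in the same way as the paper), then uses a deficiency form of the orientability criterion and attempts to bound $\max_T\bigl(2|T|-e^{\mathrm{inc}}_3(T)\bigr)$ directly by moment computations and union bounds in the configuration model. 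These are two different proof strategies for the same theorem, and yours would, if completed, be more informative: it would characterize \emph{which} vertices fail, not just how many.

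However, the proposal has a real gap exactly where you flag ``the delicate point,'' and the flag does not resolve it. You propose to union bound over a ``much smaller family of plausible violators, namely sets concentrated on degree-$3$ vertices and cut off from the rest of the $3$-core by an atypically sparse edge boundary,'' but you never explain how to \emph{prove} that a maximal-deficiency set must take this form, nor how to enumerate or count this family tightly enough to beat $e^{-\Omega(n/\log^2 n)}$ tail probabilities in the intermediate range $|T|\in[\sqrt n\,\mathrm{polylog}\,n,\ \epsilon n]$. This confinement step is the whole difficulty, and it is precisely what the paper's algorithmic approach is designed to avoid: the branching process \emph{automatically} localizes exploration into $O(\log n)$-size pieces, so no union bound over subsets is ever needed. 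Without a worked-out confinement argument, the intermediate-$|T|$ case is unproved. Relatedly, the claim that ``pinning down this threshold scale is exactly what puts the $\sqrt n$ into the statement'' is not right: the $\sqrt n$ in the paper is an artifact of the algorithm's stopping rule and a lossy Markov step, not an intrinsic feature of the deficiency landscape. Indeed, a heuristic version of your own argument --- the degree-$3$ induced subgraph is a configuration model with per-vertex degree distributed $\mathrm{Bin}(3,q)$ and branching ratio $\lambda=6a^{(3)}/\mu<1$, hence subcritical with total excess $O(\log n)$ w.h.p.\ --- would suggest a polylogarithmic deficiency, which would be a \emph{stronger} result than \theoref{almost}. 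If you want to pursue the static route, the correct target is probably a polylog bound, and you would still need to import the paper's key analytic fact, namely that $\lambda=6a^{(3)}/\mu<1$ for $c>c_2$, which the paper establishes by solving the ODE system and which does not appear anywhere in your outline.
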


The results of~\cite{FR07,CSW07} show that for $c>c_2$, with high probability $G(n,c/n)$ is not $2$-orientable,
they don't give quantitative bounds on the size of the set of vertices in $G(n,c/n)$ that can be guaranteed an out-degree~$2$.
Theorem~\ref{theo:almost}, achieves this goal. Our proof for Theorem~\ref{theo:almost} is constructive and uses an extension of
the $2$-orientability algorithm of Fernholz and Ramachandran \cite{FR07}.  Our analysis is quite different from~\cite{FR07} and is
based on proving subcriticality of the various branching processes generated by our algorithm. We use differential equations to
model the branching process, and show subcriticality by analyzing these differential equations.

\paragraph{Other Related Work.} Jackson \emph{et al.}\ \cite{servatius:random:2008} studied the space of random
$4$-regular graphs and showed that with high probability they are {\em globally rigid} (a stronger notion of rigidity \cite{C05}).
In the $G(n,p)$ model
they prove that when $p=n^{-1}(\log n+2\log\log n+\omega(1))$, then with high probability $G(n,p)$ is rigid, but they have no results for
$G(n,p)$ when the expected number of edges is $O(n)$. In a recent result, Theran~\cite{LT09} showed using a simple counting argument that for
a constant $c$ w.h.p.\ all rigid components in $G(n,c/n)$ are either {\em tiny} or {\em giant}. Since we use this result as a technical tool,
it is introduced in more detail in \secref{random-prelim}.

\paragraph{Organization.} This paper is organized as follows.  We introduce the required background in
combinatorial rigidity in \secref{rigid-prelim} (rigidity experts may skip this section),
and the technical tools from random graphs we use to
prove \theoref{main} in \secref{random-prelim} (random graphs experts may skip this section).
With the background in place, we prove some graph theoretic
lemmas in \secref{graphs}. The proof that \theoref{almost} implies \theoref{main} is in
\secref{thresh}.

The remainder of the paper is devoted to the proof of \theoref{almost}.  \secref{conf} introduces the
facts about the random configuration model we need, and then we present our
$2$-orientation algorithm in \secref{algo}.  \secref{almost} proves \theoref{almost}.

\paragraph{Notations.} Throughout this paper $G$ is a graph $(V,E)$ with $|V|=n$ and $|E|=m$. All our graphs are simple unless explicitly
stated otherwise.  Subgraphs are typically denoted by $G'$ with $n'$ vertices and $m'$ edges.  Whether a subgraph is
edge-induced or vertex-induced is always made clear.  A \emph{spanning subgraph} is one that includes the entire vertex set $V$.

Erd\H{o}s-R\'enyi random graphs on $n$ vertices with edge probability $p$ are denoted $G(n,p)$.  Since we are interested in random graphs with constant
average degree, we use the parameterization $G(n,c/n)$, where $c>0$ is a fixed constant.

\paragraph{Asymptotics.} We are concerned with the asymptotic behavior of $G(n,c/n)$ as $n\to \infty$.
The constants implicit in the $O(\cdot)$, $\Omega(\cdot)$, $\Theta(\cdot)$; and the convergence implicit in $o(\cdot)$ are all
taken to be uniform.  A sequence of events
$\mathcal{E}_n=(E_n)_{n=1}^\infty$ holds \emph{with high probability} (shortly \emph{w.h.p.})
if $\Pr\left[E_n\right] = 1 - o(1)$.

\section{Rigidity preliminaries} \label{sec:rigid-prelim}

In this section, we introduce the notations of and a number of standard results on \emph{combinatorial rigidity}
that we use throughout.  All of the (standard) combinatorial lemmas presented here can be established by the methods of (and are cited to)
\cite{streinu:lee:pebbleGames:2008,maps}, but we give some proofs for completeness and to introduce non-experts to
style of combinatorial argument employed below.

\paragraph{Sparse and Spanning Graphs.} A graph $G$ with $n$ vertices and $m$ edges is $(k,\ell)$-sparse if, for all edge-induced
subgraphs on $n'$ vertices and $m'$ edges, $m'\le kn'-\ell$.  If, in addition $m=kn-\ell$, $G$ is $(k,\ell)$-tight.  If $G$
has a $(k,\ell)$-tight spanning subgraph it is $(k,\ell)$-spanning.  When $k$ and $\ell$ are non-negative integer parameters
with $\ell\in [0,2k)$ the $(k,\ell)$-sparse graphs form a matroidal family \cite[Theorem 2]{streinu:lee:pebbleGames:2008}
with rich structural properties,
some of which we review below. In the interest of brevity, we introduce only the parts of the theory required.

In particular, throughout, we are interested in only two settings of the parameters $k$ and $\ell$: $k=2$ and $\ell=3$; and $k=2$ and
$\ell=0$.  For economy, we establish some standard terminology following \cite{streinu:lee:pebbleGames:2008}.  A $(2,3)$-tight
graph is defined to be a \emph{Laman graph}; a $(2,3)$-sparse graph is
\emph{Laman-sparse}; a $(2,3)$-spanning graph is \emph{Laman-spanning}.

\paragraph{The Maxwell-Laman Theorem and Combinatorial Rigidity.}  The terminology of Laman graphs is motivated by the
following remarkable theorem of Maxwell-Laman.
\begin{proposition} (Maxwell-Laman~\cite{M64,L70}) \label{thm:ML}
A generic bar-joint framework in the plane is minimally rigid if and only if its graph is a Laman graph.
\end{proposition}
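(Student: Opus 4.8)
The plan is to pass from rigidity to \emph{infinitesimal rigidity} by way of the rigidity matrix. Recall that for a configuration $p\colon V\to\mathbb{R}^2$ the rigidity matrix $R(G,p)$ has one row per edge, with $p_i-p_j$ in the block of vertex $i$, $p_j-p_i$ in the block of vertex $j$, and zeros elsewhere. By the theorem of Asimow and Roth (see \cite{graver:servatius:rigidityBook:1993}), for a generic configuration infinitesimal rigidity and rigidity coincide, and $\operatorname{rank}R(G,p)$ attains its maximum over all $p$; call this the \emph{generic rank} of $G$. Since the three-dimensional space of infinitesimal translations and rotations is always in the kernel, the generic rank is at most $2n-3$, the framework is generically rigid iff the generic rank equals $2n-3$, and it is generically \emph{minimally} rigid iff in addition $m=2n-3$ and the rows of $R(G,p)$ are independent for generic $p$ (a dependent row could be deleted without lowering the rank, contradicting minimality). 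So the statement reduces to: the generic rank of $G$ equals $2n-3$ with all rows independent if and only if $G$ is Laman.

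For the ``only if'' direction (Maxwell's count) I would fix a generic $p$ for which the $m=2n-3$ rows of $R(G,p)$ are independent. For any vertex-induced subgraph $G'$ on $n'$ vertices with $m'$ edges, the $m'$ rows of $R(G,p)$ indexed by edges of $G'$ are supported on the $2n'$ coordinates of $V(G')$, where they agree with the rows of $R(G',p|_{G'})$; being a subfamily of an independent family they are independent, so $m'\le\operatorname{rank}R(G',p|_{G'})\le 2n'-3$. Hence $G$ is Laman-sparse, and with $m=2n-3$ it is Laman.

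For the ``if'' direction — the substantive one — I would use the inductive (Henneberg) structure of Laman graphs. The first ingredient is the purely combinatorial fact (cf.\ \cite{streinu:lee:pebbleGames:2008}) that every Laman graph on $n\ge 2$ vertices is built from a single edge by a sequence of \emph{$0$-extensions} (add a new vertex joined to two old vertices) and \emph{$1$-extensions} (delete an edge $uv$, add a new vertex $w$ joined to $u$, $v$, and some third vertex $z$): since $\sum_v\deg(v)=4n-6$, a Laman graph with $n\ge 3$ has a vertex of degree $2$ or $3$; a degree-$2$ vertex is removed by reversing a $0$-extension, and for a degree-$3$ vertex $w$ with neighbours $\{a,b,c\}$ a blocking-set/counting argument shows at least one of the three reverse $1$-extensions (delete $w$, then add one of $ab,bc,ca$) again gives a Laman graph, so we may induct. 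The second ingredient is that each move preserves ``generic rank $2n-3$ with independent rows'': for a $0$-extension this is a short rank computation on the two new rows; for a $1$-extension I would use the \emph{collinear specialization} trick, placing the new vertex $w$ on the line through $p_u$ and $p_v$ but off $p_z$, and checking that any self-stress $\omega$ of the enlarged framework forces $\omega_{wz}=0$ (perpendicular component at $w$) and then restricts to a self-stress of the original generic framework (transferring the weights $\omega_{wu},\omega_{wv}$ into a single weight $\omega_{uv}$ along the common line), which must vanish by induction. Since independence of rows is a Zariski-open condition, one such non-degenerate witness configuration suffices to conclude it for generic placements, and the edge count is $m'=m+2=2n'-3$.

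I expect the main obstacle to be concentrated in these two ingredients of the ``if'' direction: proving the combinatorial Henneberg theorem at degree-$3$ vertices (showing the three candidate $1$-extensions cannot all be blocked) and carrying out the self-stress bookkeeping at $w$ that makes the collinear specialization genuinely non-degenerate. The Maxwell direction, the $0$-extension step, and the edge-count arithmetic are routine. As an alternative to the Henneberg route one could substitute the Lov\'asz--Yemini matroid-union proof (a graph is Laman iff doubling any one edge yields the union of two spanning trees), but I expect the Henneberg argument to be the cleanest to present.
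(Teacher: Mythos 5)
The paper does not prove this proposition. It is the classical Maxwell--Laman theorem, stated here as a foundational result and cited to \cite{M64,L70}; the paper treats it as a black box that reduces the geometric rigidity question to the combinatorial one (Laman-spanning), and all of the paper's own arguments thereafter are purely graph-theoretic. So there is no in-paper proof to compare against.

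That said, your sketch is a sound outline of the standard Henneberg-style proof. The reduction to infinitesimal rigidity via Asimow--Roth, the Maxwell direction by restricting the independent row family to a vertex-induced subgraph, and the inductive construction via $0$- and $1$-extensions with the collinear-specialization argument for the $1$-extension step are all correct and are essentially the textbook route (as in \cite{graver:servatius:rigidityBook:1993}). You have also correctly isolated where the real work is: (i) the combinatorial Henneberg theorem, in particular the degree-$3$ case, which requires the submodularity of the function $U \mapsto 2|U|-3-|E(U)|$ to show that the three candidate edges $ab,bc,ca$ cannot all be blocked by tight sets; and (ii) the self-stress bookkeeping at the specialized vertex $w$, where one must check both that $\omega_{wz}=0$ is forced and that the collinear pair $\omega_{wu},\omega_{wv}$ transfers to a well-defined stress on the deleted edge $uv$, so that vanishing of that transferred stress forces $\omega_{wu}=\omega_{wv}=0$. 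The Zariski-openness observation correctly upgrades the witness configuration to a generic conclusion. If you were to write this out in full, those two items would carry essentially all the difficulty; the alternative Lov\'asz--Yemini route you mention is also standard and trades the Henneberg combinatorics for matroid-union machinery. None of this, however, is needed for the paper, which takes the theorem as given.
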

An immediate corollary is that a generic framework is rigid, but not necessarily minimally so, if and only if its
graph is Laman-spanning.  From now on, we will switch to the language of sparse graphs, since our setting is
\emph{entirely combinatorial}.

\paragraph{Rigid Blocks and Components.}
Let $G$ be a graph.  A \emph{rigid block} in $G$ is defined to be a vertex-induced Laman-spanning
subgraph.  We note that if a block is not an induced Laman graph, then there may be many different choices of edge sets certifying that it is
Laman spanning.  A \emph{rigid component} of $G$ is an inclusion-wise maximal block.\footnote{Readers familiar with \cite{streinu:lee:pebbleGames:2008}
will notice that our definition is slightly different, since we allow graphs that are not $(k,\ell)$-sparse.}
As a reminder to the reader, although we have retained the standard terminology of \emph{``rigid''} components,
these definitions are graph theoretic.

A \emph{Laman-basis} of a graph $G$ is a maximal subgraph of $G$ that is Laman-sparse.  All of these are the same size
by the matroidal property of Laman graphs \cite[Theorem 2]{streinu:lee:pebbleGames:2008}, and each rigid block in $G$
induces a Laman graph on its vertex set in any Laman basis of $G$.  Thus, we are free to pass through to a Laman
basis of $G$ or any of its rigid components without changing the rigidity behavior of $G$.

We now present some properties of rigid blocks and components that we use extensively.
\begin{lemma}[{\cite[Theorem 5]{streinu:lee:pebbleGames:2008}}]\label{lem:component-decomp}
Any graph $G$ decomposes uniquely into rigid components, with each edge in exactly one rigid component.
These components intersect pairwise on at most one vertex, and they are independent of the choice of Laman basis for $G$
\end{lemma}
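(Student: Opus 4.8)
The plan is to isolate a single \emph{gluing lemma} and derive everything from it: if $W_1$ and $W_2$ each span a rigid block of $G$ and $|W_1\cap W_2|\ge 2$, then $W_1\cup W_2$ spans a rigid block of $G$. This lemma needs no sparsity hypothesis on $G$, so the one point on which our setting departs from \cite[Theorem~5]{streinu:lee:pebbleGames:2008} -- that $G$ need not be Laman-sparse -- will not cause trouble. I would prove the gluing lemma inside the generic rigidity matroid $\mathcal R_2$, in which $W$ spans a block exactly when $\operatorname{rank}(E(G[W]))=2|W|-3$. Fix two common vertices $u,v\in W_1\cap W_2$. Adjoining an edge inside a rigid block leaves it rigid, so $uv$ already lies in the $\mathcal R_2$-closure of each of $E(G[W_1])$, $E(G[W_2])$, $E(G[W_1\cup W_2])$; hence adding $uv$ to $G$ if necessary changes none of those ranks nor whether $W_1\cup W_2$ spans a block, and we may assume $uv\in E(G)$. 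Now choose Laman subgraphs $H_1\subseteq G[W_1]$ and $H_2\subseteq G[W_2]$ each containing $uv$. Their union $H_1\cup H_2$ has exactly $2|W_1\cup W_2|-3$ edges, and a short $(2,3)$-sparsity check on an arbitrary vertex subset $X$ -- split $X$ into $X\cap W_1$ and $X\cap W_2$, bound the edges inside each part by the Laman count, and subtract the one copy of $uv$ that is counted twice precisely when $u,v\in X$ -- shows $H_1\cup H_2$ is $(2,3)$-sparse, hence Laman-spanning on $W_1\cup W_2$. (Alternatively, by the Maxwell--Laman Theorem (Proposition~\ref{thm:ML}): a flex of $G[W_1\cup W_2]$ restricts to a rigid motion on each $W_i$, and a planar rigid motion is determined by its effect on the two distinct points $u,v$, so the two restrictions agree.) Arranging this count to land on $2|X|-3$ rather than $2|X|-2$ -- which is exactly where the hypothesis $|W_1\cap W_2|\ge 2$ is spent -- is the one delicate point, and I expect it to be the main obstacle.

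Granting the gluing lemma, the remainder is essentially formal. Fix an edge $e=uv$; the set $\{u,v\}$ spans a block (a lone edge is $(2,3)$-tight), and $G$, being finite, has finitely many blocks. Let $\mathcal B_e$ be the nonempty finite family of block-spanning vertex sets containing $u$ and $v$, and put $C_e=\bigcup_{W\in\mathcal B_e}W$. Any two members of $\mathcal B_e$ share $\{u,v\}$, so a finite iteration of the gluing lemma shows $C_e$ spans a block; being a union it is the $\subseteq$-largest member of $\mathcal B_e$, and it is a rigid component, since any strictly larger block would still contain $u$ and $v$ and would therefore already belong to $\mathcal B_e$. Hence every edge $e$ lies in the rigid component $C_e$, and if a rigid component $C$ contains $e$ then $u,v\in V(C)$, so $V(C)\in\mathcal B_e$ and thus $V(C)=C_e$: each edge lies in exactly one rigid component, and the decomposition is well defined. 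For the intersection bound, suppose two distinct rigid components $C_1\ne C_2$ had $|V(C_1)\cap V(C_2)|\ge 2$; the gluing lemma would then make $V(C_1)\cup V(C_2)$ span a block, which -- a block being determined by its vertex set, and neither of two distinct components containing the other -- is strictly larger than one of $C_1$, $C_2$, contradicting maximality. So rigid components meet pairwise in at most one vertex. (Isolated vertices, by the standard convention, form trivial one-vertex rigid components; carrying no edge and being disjoint from everything, they interact with none of the above.)

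Finally, the rigid components were described entirely in terms of $G$, as the $\subseteq$-maximal vertex sets inducing a Laman-spanning subgraph, with no Laman basis ever chosen -- which is precisely the claimed independence. The equivalent statement that \emph{each rigid component induces a Laman graph on its vertex set in every Laman basis $L$ of $G$} also follows from the gluing lemma: it suffices to check that $E(G[V(C)])$ is a separator of the $(2,3)$-matroid on $E(G)$, that is, its rank adds with that of the complementary edge set; a circuit of $\mathcal R_2$ witnessing the failure of this would have to span at least two vertices of $V(C)$ together with a vertex outside $V(C)$, and gluing the (rigid) vertex span of that circuit onto $V(C)$ would enlarge $C$, contradicting its maximality. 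Since the rigid components partition $E(G)$, restricting $L$ to each of them then recovers the matroidal component decomposition of \cite[Theorem~5]{streinu:lee:pebbleGames:2008}.
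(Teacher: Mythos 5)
Your overall strategy---isolate a gluing lemma (two blocks sharing at least two vertices unite into a block), then derive the decomposition, uniqueness, intersection bound, and basis-independence as formal consequences---is the same strategy the paper uses; the paper merely asserts the gluing fact with ``it is easy to check'' after passing to a Laman basis. Your formal bookkeeping ($C_e$ as the union of all block-spanning sets through $e$, the maximality argument, the separator argument for basis-independence) is fine.

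The problem is the main combinatorial argument you give for the gluing lemma itself. You pick Laman bases $H_1\subseteq G[W_1]$ and $H_2\subseteq G[W_2]$, each containing $uv$, and claim $H_1\cup H_2$ has $2|W_1\cup W_2|-3$ edges and is $(2,3)$-sparse. Both claims fail once $|W_1\cap W_2|>2$. A concrete counterexample: $W_1=\{1,2,3,4\}$, $W_2=\{1,2,3,5\}$, $u=1$, $v=2$, $H_1=\{12,13,14,23,34\}$, $H_2=\{12,13,15,25,35\}$. Each $H_i$ is Laman on $W_i$, but $H_1\cup H_2$ has $8$ edges on $5$ vertices, exceeding $2\cdot 5-3=7$; taking $X=W_1\cup W_2$ in your sparsity check, the two Laman counts give $5+5=10$ and only one double-counted edge $uv$ is subtracted, leaving $9>7$. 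The source of the trouble is that you only control the overcount on the shared edge $uv$, not on however many edges $H_1$ and $H_2$ may share inside $W_1\cap W_2$, nor on the slack $2|X\cap W_1\cap W_2|$ that accumulates when the intersection is large. The paper avoids this entirely by first fixing a \emph{single} global Laman basis $L$ of $G$ and observing (as it states just before the lemma) that every rigid block of $G$ induces a Laman graph on its vertex set inside $L$. Then for $V_1,V_2$ spanning blocks with $|V_1\cap V_2|\ge 2$, inclusion-exclusion inside the one graph $L$ gives
\[
|E(L[V_1\cup V_2])| \ge (2|V_1|-3)+(2|V_2|-3)-(2|V_1\cap V_2|-3)=2|V_1\cup V_2|-3,
\]
where the upper bound $|E(L[V_1\cap V_2])|\le 2|V_1\cap V_2|-3$ holds because $L$ itself is $(2,3)$-sparse (this is where $|V_1\cap V_2|\ge 2$ is used), and the reverse inequality $|E(L[V_1\cup V_2])|\le 2|V_1\cup V_2|-3$ also holds because $L$ is $(2,3)$-sparse. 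This is the step you were wary of (``passing to a Laman basis might not be WLOG'') but it is exactly what makes the count clean, and it is justified by the matroid fact the paper cites. Your parenthetical alternative via the Maxwell--Laman theorem---a flex of $G[W_1\cup W_2]$ restricts to trivial motions on each $W_i$ which agree at $u,v$ and hence agree everywhere---is correct and does rescue the lemma, so your proof as a whole is salvageable; but as written the combinatorial route you lead with does not go through, and you should either replace it by the Maxwell--Laman argument outright or adopt the paper's device of a single global basis.
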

\begin{proof}
Since a single edge forms a rigid block, each edge must be in a maximal rigid block, which is the definition of a components.  By
picking a Laman basis of $G$, we may assume, w.l.o.g., that $G$ is Laman-sparse.  In that case, it is easy to check that two
rigid blocks intersecting on at least two vertices form a larger rigid block.  Since components are rigid blocks, we then
conclude that components intersect on at most one vertex.  The rest of the lemma then follows from edges having two endpoints.
\end{proof}

\begin{lemma}[{\cite[Theorem 2]{streinu:lee:pebbleGames:2008}}]\label{lem:monotone}
Adding an edge to a graph $G$ never decreases the size of any rigid component in $G$; i.e., rigidity is a monotone property
of graphs.
\end{lemma}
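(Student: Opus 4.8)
The plan is to reduce the statement to the fact that the property ``the vertex set $W$ induces a rigid block'' is monotone under edge addition, and then to package rigid blocks into components via Lemma~\ref{lem:component-decomp}. Write $G' = G + e$ for the graph obtained from $G$ by adding a single edge $e$, and fix a rigid component $C$ of $G$ with vertex set $W = V(C)$.

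First I would unwind the definitions: $C$ being a rigid component means that $G[W]$, the subgraph of $G$ induced on $W$, is Laman-spanning, so it contains some spanning subgraph $L$ on vertex set $W$ that is $(2,3)$-tight (a Laman graph). Since $G \subseteq G'$ and $W \subseteq V(G) \subseteq V(G')$, we have $L \subseteq G[W] \subseteq G'[W]$; hence $G'[W]$ is again Laman-spanning, i.e., $W$ induces a rigid block in $G'$.

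Next I would apply Lemma~\ref{lem:component-decomp} to $G'$: the rigid block induced on $W$ is contained in a unique rigid component $C'$ of $G'$, so $V(C) = W \subseteq V(C')$, and in particular $|V(C')| \ge |V(C)|$ (passing to Laman bases, the edge counts satisfy $2|W|-3 \le 2|V(C')|-3$, so the bound holds for edges too). Adding several edges one at a time then yields the general monotonicity claim.

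I do not anticipate a real obstacle. The only points needing care are bookkeeping: (i) a component of $G$ and a component of $G'$ are a priori different objects --- adding $e$ can merge several components of $G$ into one component of $G'$ --- so the correct formulation is ``every component of $G$ is contained in a component of $G'$'', not a bijection; and (ii) because ``Laman-spanning'' is defined through the existence of a tight spanning subgraph rather than by a count on $G[W]$ itself, the monotonicity is cleanest to verify at the level of the certifying Laman basis $L$, as above, rather than by manipulating sparsity inequalities directly.
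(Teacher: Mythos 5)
Your proof is correct and follows essentially the same route as the paper: show that a rigid component of $G$ remains a rigid block of $G'$, then invoke Lemma~\ref{lem:component-decomp} to place it inside a (possibly larger) component of $G'$. Your presentation is actually a bit cleaner than the paper's, which splits into two cases depending on whether the new edge lies within an existing component, whereas you observe directly that a certifying Laman basis $L \subseteq G[W]$ is automatically contained in $G'[W]$, making the case split unnecessary.
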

\begin{proof}
There are two cases: either the new edge has both endpoints in a rigid component of $G$ or it does not.  In the first
case, the component was already a Laman-spanning induced subgraph and remains that way.  In the second case, \lemref{component-decomp}
implies that the new edge is in exactly one component of the new graph; this may subsume other components of $G$ or be just the new
edge.  Either way, all of the components of $G$ remain rigid blocks in the new graph.
\end{proof}

The following lemma is quite well-known.
\begin{lemma}
\label{lem:blocks-combine}
Let $G$ be a graph, and let $G_1=(V_1,E_1)$ and $G_2=(V_2,E_2)$ be rigid blocks in $G$ and suppose that either:
\begin{itemize}
\item $V_1\cap V_2=\emptyset$ and there are at least three edges with one endpoint in $V_1$ and the other in $V_2$, and
these edges are incident on at least two vertices in $V_1$ and $V_2$
\item $V_1\cap V_2\neq\emptyset$ (and so by \lemref{component-decomp} the intersection is a single vertex $v$)
and there is one edge $ij$ with $i\in V_1$, $j\in V_2$ and $i$ and $j$ distinct from $v$
\end{itemize}
Then $V_1\cup V_2$ is a rigid block in $G$.
\end{lemma}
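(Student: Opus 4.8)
The plan is to reduce the claim to a hereditary sparsity count and build the required certificate by hand. Since a rigid block is by definition a vertex-induced Laman-spanning subgraph, it suffices to exhibit a single $(2,3)$-tight subgraph of $G$ whose vertex set is exactly $V_1\cup V_2$. As $G_1$ and $G_2$ are rigid blocks, $G[V_1]$ and $G[V_2]$ each contain a $(2,3)$-tight spanning subgraph; fix such subgraphs $H_1$ on $V_1$ (with $2n_1-3$ edges, writing $n_i=|V_i|$) and $H_2$ on $V_2$ (with $2n_2-3$ edges). In the first case I would pick exactly three connecting edges $F=\{e_1,e_2,e_3\}$ that are still collectively incident on at least two vertices of $V_1$ and at least two of $V_2$; such a triple always exists (choose $e_1=a_1b_1$, then $e_2=a_2b_2$ with $a_2\ne a_1$, which is possible since the full collection meets at least two vertices of $V_1$, and if $b_2=b_1$ take $e_3$ to be an edge whose $V_2$-endpoint differs from $b_1$, while otherwise $e_3$ can be any third edge). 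In the second case take $F=\{ij\}$. Now set $H:=H_1\cup H_2\cup F$. Since $V_1\cap V_2$ spans no edges of a simple graph and $F$ is disjoint from $E(H_1)\cup E(H_2)$, we get $|E(H)|=(2n_1-3)+(2n_2-3)+|F|$; this equals $2|V_1\cup V_2|-3$ in both cases, since $|V_1\cup V_2|=n_1+n_2$ with $|F|=3$ in the first case and $|V_1\cup V_2|=n_1+n_2-1$ with $|F|=1$ in the second.

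The remaining step --- and essentially the only nontrivial one --- is to show $H$ is $(2,3)$-sparse, for then $H$ is a Laman graph spanning $V_1\cup V_2$, so $G[V_1\cup V_2]$ is Laman-spanning and we are done. Let $H'\subseteq H$ have $n'$ vertices and $m'$ edges, and decompose $H'=H_1'\cup H_2'\cup F'$, where $H_i'=H'\cap H_i$ spans $n_i'$ vertices and $F'=F\cap E(H')$, so $m'=m_1'+m_2'+|F'|$. Laman-sparsity of $H_1$ and $H_2$ gives $m_i'\le 2n_i'-3$ when $n_i'\ge 2$ and $m_i'=0$ when $n_i'\le 1$. When $n_1',n_2'\ge 2$ one has $m'\le 2(n_1'+n_2')-6+|F|$, which is $2n'-3$ in the first case (there $n_1'+n_2'=n'$ and $|F|=3$) and at most $2(n'+1)-6+1=2n'-3$ in the second (there $|F|=1$ and $n_1'+n_2'\le n'+1$, with equality exactly when the shared vertex $v$ lies in $H'$). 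The remaining cases are those in which some $n_i'\le 1$; there the bound $2n_i'-3$ would over-credit, and one must bound $|F'|$ directly.

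This is exactly where the hypotheses are used. In the first case, if $n_1'=1$ then $V(H')\cap V_1$ is a single vertex, and since the three chosen connecting edges meet at least two vertices of $V_1$, at most two of them are incident to that vertex, so $|F'|\le 2$; symmetrically if $n_2'=1$, and $|F'|\le 1$ when $n_1'=n_2'=1$ by simplicity. In the second case, if $V(H')\cap V_1=\{v\}$ then $F'=\emptyset$, since the connecting edge $ij$ has $i\in V_1\setminus\{v\}$; symmetrically for $V_2$. Substituting these bounds into $m'=m_1'+m_2'+|F'|$ and checking the finitely many combinations of $n_1',n_2'\in\{0,1,\ge 2\}$ --- and, in the second case, whether $v\in V(H')$ --- confirms $m'\le 2n'-3$ in every case. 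Hence $H$ is $(2,3)$-tight, which proves the lemma.

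The main obstacle is precisely this boundary bookkeeping: away from it the sparsity bound for $H'$ is just the sum of the Laman-sparsity bounds for $H'\cap H_1$ and $H'\cap H_2$, but when $H'$ meets one block in only a single vertex (or, in the overlapping case, only in $v$) that sum is too generous, and one needs the number of connecting edges available to $H'$ to be small. The two bullet conditions --- three connecting edges touching two vertices on each side, and a single connecting edge avoiding the shared vertex at both ends --- are calibrated exactly so that this holds; the rest of the argument is routine.
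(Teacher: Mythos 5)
Your proof is correct and follows the same basic approach as the paper: pick Laman bases $H_1,H_2$ for the two blocks, adjoin a suitable set of connecting edges $F$, and verify $(2,3)$-tightness of $H_1\cup H_2\cup F$ by decomposing an arbitrary subgraph across the two sides. In fact your treatment is more careful than the paper's, which compresses the sparsity check into a one-line (in)equality and leaves implicit the boundary cases with $n_i'\le 1$ (or $V(H')\cap V_i=\{v\}$) --- exactly the cases where the hypotheses ``three connecting edges incident on at least two vertices in each side'' and ``$i,j$ distinct from $v$'' are actually used, and without which the lemma would be false; your explicit case analysis fills that gap.
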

\begin{proof}
There are two cases to check.  In either case, by \lemref{component-decomp} it is no loss of generality to assume that
$G_1$ and $G_2$ are Laman graphs on $n_1$ and $n_2$ vertices.  The stated result follows from picking bases.

If $V_1$ and $V_2$ are disjoint, we further assume that there are exactly three
edges going between them.  Call this set $E_3$. Since the $E_i$ are disjoint by \lemref{component-decomp},
we see that $V_1\cup V_2$ spans $2(n_1+n_2) - 3$ total edges.  Taking an arbitrary subset $V'\subset V_1\cup V_2$ of $n'$
vertices, we see that it spans at most $|E_1\cap E(V'\cap V_1)| + |E_2\cap E(V'\cap V_2)| + 3 = 2n'-3$ edges, proving that
$V_1\cup V_2$ spans an induced Laman graph.

The cases where $V_1$ and $V_2$ is similar, after accounting for a one-vertex overlap with inclusion-exclusion.
\end{proof}

\begin{lemma}[{\cite[Corollary 6]{streinu:lee:pebbleGames:2008}}]
\label{lem:circuits}
If $G$ is a simple graph on $n$ vertices and has $m>2n-3$ edges, then $G$ spans a rigid block that is not Laman-sparse
on at least four vertices.  This block has minimum vertex degree at least~$3$.
\end{lemma}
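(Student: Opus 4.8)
The plan is to exhibit a \emph{minimal} obstruction to Laman-sparsity inside $G$ and read off its structure; in matroidal language this amounts to showing that a circuit of the $(2,3)$-sparsity matroid of a \emph{simple} graph has at least four vertices and minimum degree three, and then observing that such a circuit sits inside a rigid block.

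First I would pass to a subgraph. Since $m > 2n - 3$, the graph $G$ is not Laman-sparse, so among all edge-induced subgraphs of $G$ that fail to be Laman-sparse there is one with the fewest edges; call it $H$ and set $n_H = |V(H)|$, $m_H = |E(H)|$. By definition $m_H > 2n_H - 3$, hence (integrality) $m_H \ge 2n_H - 2$. By minimality of $H$, deleting any single edge $e$ from $H$ produces a Laman-sparse graph, so $m_H - 1 \le 2n_H - 3$; combined with the previous inequality this pins down $m_H = 2n_H - 2$ and shows that $H - e$ has exactly $2n_H - 3$ edges for every $e \in E(H)$.

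Next I would establish the structural claims in turn. Because $H$ is simple and $m_H = 2n_H - 2$, the bound $m_H \le \binom{n_H}{2}$ rules out $n_H \le 3$, so $H$ spans at least four vertices. For the degree bound, suppose some $v \in V(H)$ had $\deg_H(v) \le 2$, and let $H'$ be the edge-induced subgraph of $H$ on the edges of $H$ not incident to $v$. Then $v \notin V(H')$, so $|V(H')| \le n_H - 1$, while $|E(H')| \ge m_H - 2 = 2n_H - 4$, and $E(H') \subsetneq E(H)$ since $v$ meets at least one edge of $H$; by minimality $H'$ is Laman-sparse, forcing $2n_H - 4 \le |E(H')| \le 2|V(H')| - 3 \le 2n_H - 5$, a contradiction. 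Hence every vertex of $H$ — and therefore of the vertex-induced graph $G[V(H)]$, which contains $H$ — has degree at least three. Finally, fixing any $e \in E(H)$, the graph $H - e$ is Laman-sparse with exactly $2n_H - 3$ edges, and since each vertex of $H$ has degree at least two in $H - e$ it has no isolated vertices, so $H - e$ is a Laman graph on the full vertex set $V(H)$. Thus $G[V(H)]$ has $H - e$ as a Laman spanning subgraph, i.e., it is a rigid block; it is not Laman-sparse because it contains $H$; and it spans $n_H \ge 4$ vertices with minimum degree at least three. This $G[V(H)]$ is the desired block.

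I do not expect a genuine obstacle here — the argument is a short counting-plus-minimality argument. The only points needing care are the bookkeeping distinction between edge-induced and vertex-induced subgraphs (invoked when transferring properties of $H$ to $G[V(H)]$), the use of simplicity to eliminate the cases $n_H \le 3$ and so obtain the ``at least four vertices'' conclusion, and verifying that $H - e$ is genuinely spanning before calling it a Laman graph.
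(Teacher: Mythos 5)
Your proof is correct and follows essentially the same plan as the paper's: pick an edge-wise minimal subgraph $H$ failing Laman-sparsity, pin down $m_H = 2n_H-2$, rule out $n_H \le 3$ via simplicity, show minimum degree $\ge 3$, and then observe that $H-e$ is a spanning Laman graph. The only real variation is in the degree bound, where the paper additionally minimizes the number of vertices and argues that deleting a degree-$\le 2$ vertex would leave a smaller non-Laman-sparse subgraph, whereas you stick with edge-minimality alone and get a numerical contradiction from the forced sparsity of the edge-induced subgraph obtained by deleting the (at most two) edges at such a vertex; both are valid, and your version is slightly more economical in its minimality hypotheses.
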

\begin{proof}
Since $G$ has more than $2n-3$ edges, it is not Laman-sparse.  Select an edge-wise minimal subgraph $G'$ on $n'$ vertices and $m'$
edges that is not Laman-sparse, and, additionally, make $n'$ minimum.  By minimality of $m'$, $m'=2n'-2$, and since $G$ is simple, $G'$
is not a doubled edge, and thus has at least four vertices.  Since dropping any edge from $G'$ results in a subgraph on the same
vertices with $2n'-3$ edges that is Laman-sparse, $G'$ is Laman-spanning, giving the desired rigid block.  Finally, removing a
degree one or two vertex from $G'$ would result in a smaller subgraph that is not Laman-sparse, so minimality of $n'$ implies that
$G'$ has minimum vertex degree $3$.
\end{proof}

\begin{lemma}[{\cite[Lemma 4]{streinu:lee:pebbleGames:2008}}]
\label{lem:laman-spanning-degrees}
If $G=(V,E)$ is Laman-spanning graph on $n$ vertices, then $G$ has minimum degree at least two.
\end{lemma}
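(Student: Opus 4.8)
The plan is to reduce to the case where $G$ is itself a Laman graph and then apply the hereditary sparsity count to a single vertex deletion. Throughout we treat $n\ge 3$, the small cases $n\le 2$ being degenerate (for $n=1$ no Laman-spanning graph exists, and for $n=2$ the only one is a single edge). Since $G$ is Laman-spanning, fix a spanning subgraph $H=(V,E_H)$ that is $(2,3)$-tight, so that $|E_H|=2n-3$ and $H$ is hereditarily $(2,3)$-sparse. Because $H$ has the same vertex set as $G$, we have $\deg_G(v)\ge\deg_H(v)$ for every $v\in V$, so it suffices to prove that $H$ has minimum degree at least two. Note that this reduction is the whole point: $G$ itself can be arbitrarily dense, so a vertex-deletion argument applied to $G$ directly would give nothing — we must first pass to the sparse certificate $H$.

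Next I would argue by contradiction: suppose some vertex $v$ has $\deg_H(v)=d\le 1$. Delete $v$ to obtain $H-v$, which has $n-1$ vertices and $2n-3-d\ge 2n-4$ edges. Since $n\ge 3$ we have $2n-4\ge 2>0$, so $H-v$ contains at least one edge; let $n'$ be the number of vertices incident to the edges of $H-v$, so that $2\le n'\le n-1$. Applying the $(2,3)$-sparsity of $H$ to this edge-induced subgraph gives $2n-4\le 2n'-3\le 2(n-1)-3=2n-5$, which is absurd. Hence $\delta(H)\ge 2$, and therefore $\delta(G)\ge 2$ as well.

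There is essentially no serious obstacle here; the only points requiring care are (i) remembering to pass to a $(2,3)$-tight spanning subgraph rather than arguing with $G$ directly, and (ii) being careful that sparsity is a statement about \emph{edge}-induced subgraphs, so that the isolated vertices possibly created by deleting $v$ are harmless — this is exactly why I count $n'$, the vertices actually touched by edges of $H-v$, and it is also where the hypothesis $n\ge 3$ is genuinely needed (to guarantee $H-v$ has an edge at all, so that the sparsity bound is non-vacuous).
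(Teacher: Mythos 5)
Your proof is correct and takes essentially the same approach as the paper: pass to a $(2,3)$-tight spanning subgraph (a Laman basis) and use the hereditary sparsity count on the vertex deletion $H-v$ to derive a contradiction. You are somewhat more careful than the paper's terse version—treating $\deg_H(v)=0$ as well as $1$, tracking the edge-induced-subgraph subtlety around possible isolated vertices, and flagging the degenerate $n\le 2$ cases (where, indeed, the lemma as literally stated fails for $n=2$)—but the core idea is identical.
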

\begin{proof}
Pick a Laman basis $G'$ of $G$.	If $G'$ has a degree one vertex $v$, then $V-v$ spans $2n-4>2(n-1)-3$
edges, contradicting the assumption that $G'$ was a Laman graph.  Thus no graph $G$ with a degree one
vertex can have a spanning subgraph that is a Laman graph.
\end{proof}

\begin{lemma}[{\cite[Lemma 17]{streinu:lee:pebbleGames:2008}}]
\label{lem:peel-degree-two}
If $G=(V,E)$ is a Laman-spanning graph on $n$ vertices,
removing a degree two vertex results in a smaller Laman-spanning graph.
\end{lemma}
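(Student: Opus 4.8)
The plan is to reduce everything to a fixed Laman basis and then invoke hereditary sparsity. First I would pass from $G$ to a spanning Laman subgraph $G'$; one exists because $G$ is Laman-spanning, and being $(2,3)$-tight it has exactly $2n-3$ edges. Let $v$ be the degree-two vertex of $G$ that we wish to delete. Since $G'$ is a subgraph of $G$, the degree of $v$ in $G'$ is at most two; and since a Laman graph is in particular Laman-spanning, \lemref{laman-spanning-degrees} tells us that $G'$ has minimum degree at least two. Hence $v$ has degree \emph{exactly} two in $G'$.

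Next I would delete $v$ from $G'$ and count. The graph $G'-v$ spans the vertex set of $G-v$, it has $n-1$ vertices, and it has $(2n-3)-2 = 2(n-1)-3$ edges. Moreover $(2,3)$-sparsity is hereditary: every edge-induced subgraph of $G'-v$ is also an edge-induced subgraph of $G'$, so $G'-v$ inherits Laman-sparsity from $G'$. A Laman-sparse graph on $n-1$ vertices with $2(n-1)-3$ edges is $(2,3)$-tight, i.e.\ a Laman graph, and since it is a spanning subgraph of $G-v$, this exhibits $G-v$ as Laman-spanning on $n-1<n$ vertices, as claimed.

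I do not expect any real obstacle here; the only points requiring care are (i) ensuring that the chosen basis does not assign $v$ degree zero or one — this is exactly what the minimum-degree lemma rules out, and without it the edge count of $G'-v$ would exceed $2(n-1)-3$ and the argument would break — and (ii) the otherwise routine edge bookkeeping. For $n\le 2$ the statement is vacuous, since a Laman graph on at most two vertices has no degree-two vertex, so I would tacitly assume $n\ge 3$ throughout.
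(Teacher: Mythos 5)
Your proof is correct and follows essentially the same route as the paper's: pick a Laman basis $G'$, use \lemref{laman-spanning-degrees} to conclude that $v$ has degree exactly two in $G'$, delete $v$ and count $2(n-1)-3$ edges, and invoke hereditary $(2,3)$-sparsity to conclude $G'-v$ is Laman. You are slightly more explicit than the paper about the hereditariness step and the $n\le 2$ edge case, but the argument is the same.
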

\begin{proof}
Let $v$ be a degree two vertex in $G$.  Pick a Laman basis $G'$ of $G$.  By \lemref{laman-spanning-degrees},
both edges incident on $v$ are in $G'$.  In $G'$, $V-v$ spans $2n-3-2=2(n-1)-3$ edges, implying that $V-v$
induces a smaller Laman graph in $G'$, from which it follows that $V-v$ is Laman-spanning.
\end{proof}

\paragraph{$2$-orientatbility and $(2,0)$-sparsity.}  We now consider the structure properties of $(2,0)$-sparse graphs.  The
properties we review here can be obtained from \cite{maps}.  A graph $G$ is defined to be $2$-orientable if its
edges can be oriented such that each vertex has out-degree at most $2$.  There is a close connection between
$2$-orientability and $(2,0)$-sparsity expressed in the following lemma.

\begin{lemma}[{\cite[Lemma 6]{maps} or \cite[Theorem 8 and Lemma 10]{streinu:lee:pebbleGames:2008}}]
\label{lem:mapsparse}
A graph $G$ is $(2,0)$-tight if and only if it is a maximal $2$-orientable graph.
\end{lemma}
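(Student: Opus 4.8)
The plan is to derive the lemma from the classical equivalence between $2$-orientability and $(2,0)$-sparsity, isolated as a standalone claim, plus one line of edge counting. Write $G=(V,E)$ with $|V|=n$ and $|E|=m$, and fix an ambient class of (multi)graphs on $V$ so that ``maximal $2$-orientable'' is unambiguous (an edge can always be added). The key step is:

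\textbf{Claim.} $G$ is $2$-orientable if and only if $G$ is $(2,0)$-sparse.

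I would prove the easy direction of the Claim by counting out-degrees: given a $2$-orientation of $G$ and a set $V'$ of $n'$ vertices, every edge induced by $V'$ has its tail in $V'$, so the number of such edges is at most $\sum_{v\in V'}(\text{out-degree of }v)\le 2n'$, i.e.\ $G$ is $(2,0)$-sparse. For the converse I would appeal to Hakimi's orientation theorem --- an orientation with out-degrees bounded by $f$ exists iff $\sum_{v\in U}f(v)$ is at least the number of edges induced by $U$, for every $U\subseteq V$; take $f\equiv 2$ --- or, to stay self-contained, run the $(2,0)$-pebble game of \cite{streinu:lee:pebbleGames:2008}: start two pebbles per vertex, insert the edges of $G$ one at a time while reversing pebble-carrying paths as needed, and use $(2,0)$-sparsity to certify that each insertion succeeds; in the final configuration the out-degree of $v$ is $2$ minus the number of pebbles left at $v$, hence at most $2$.

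Granting the Claim, the two implications of the lemma are short. If $G$ is $(2,0)$-tight then it is $(2,0)$-sparse, so $2$-orientable; and adding any edge yields $2n+1$ edges on $n$ vertices, violating the sparsity count on $V$, so not $2$-orientable by the Claim --- hence $G$ is maximal $2$-orientable. Conversely, if $G$ is maximal $2$-orientable, the Claim gives $(2,0)$-sparsity and so $m\le 2n$; and if $m<2n$, then in a $2$-orientation the out-degrees sum to $m<2n$, so some vertex $v$ has out-degree at most $1$, and adding a fresh edge directed out of $v$ keeps the orientation valid, contradicting maximality. Thus $m=2n$ and $G$ is $(2,0)$-tight. (In a simple-graph-only reading, replace the last step by the matroid exchange property: a $(2,0)$-tight simple graph on $n$ vertices exists for $n\ge 5$, e.g.\ two edge-disjoint Hamilton cycles, so a maximal $(2,0)$-sparse subgraph of $K_n$ is a basis and therefore has $2n$ edges.)

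The genuinely non-trivial ingredient is the converse half of the Claim --- building a bounded-out-degree orientation from the hereditary count --- which is where Hakimi's theorem or the pebble-game/matroid machinery of \cite{streinu:lee:pebbleGames:2008,maps} does the work, and the step I would expect to spell out carefully. Everything else is bookkeeping, modulo pinning down the ambient class of graphs in which ``maximal'' is taken.
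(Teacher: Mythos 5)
Your proof is correct, and it achieves the same result by a lightly reorganized route. You isolate the equivalence ``$2$-orientable $\iff$ $(2,0)$-sparse'' as a standalone claim, prove it, and then derive the lemma as bookkeeping; the paper instead proves the lemma first and obtains that equivalence afterwards as a corollary (Lemma~\ref{lem:sparseorientable}), using the lemma plus an ``extend to $(2,0)$-tight'' step. For the substantive direction of the claim --- building an out-degree-$\le 2$ orientation from the hereditary count --- you invoke Hakimi's orientation theorem or the $(2,0)$-pebble game, whereas the paper invokes Hall's marriage theorem on the bipartite graph with one side indexed by $E$ and the other by two copies of $V$. These are interchangeable: the Hall condition on that bipartite graph is exactly the $(2,0)$-sparsity count, Hakimi's theorem is a repackaging of the same inequality, and the pebble game is the constructive augmenting-path version; so this is a difference of citation rather than of mathematics. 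One place your write-up is actually tighter than the paper's sketch: the paper asserts outright that a maximal $2$-orientable graph has $2n$ edges, while you supply the argument (if $m<2n$ some vertex has out-degree $\le 1$ in any $2$-orientation, so a fresh edge can be oriented out of it, contradicting maximality), and you flag the dependence on the ambient class, which the paper leaves implicit. Your parenthetical handling of the simple-graph reading via matroid exchange is also fine. Overall: correct, same core ingredients, slightly different organization and named tools.
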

\begin{proof}[Proof sketch]
If $G=(V,E)$ is maximal and $2$-orientable, is has $n$ vertices and $2n$ edges.  Counting edges by their tails in an out-degree at most
two orientation, any subset
of $n'$ vertices is incident on, and therefore induces, at most $2n'$ edges.  On the other hand, the sparsity
counts and Hall's Matching Theorem implies that the bipartite graph with vertex classes indexed by $E$ and two copies of $V$ with
edges between ``edge vertices'' and the copies of their endpoints has a perfect matching.  The matching yields the desired
orientation by orienting edges into the vertex they are matched to, as there are two copies of every vertex in the bipartite graph.
\end{proof}
As a corollary, we obtain,
\begin{lemma}[{\cite[Theorem 8 and Lemma 10]{streinu:lee:pebbleGames:2008}}]
\label{lem:sparseorientable}
A graph $G$ is $2$-orientable if and only if it is $(2,0)$-sparse.
\end{lemma}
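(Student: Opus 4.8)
The plan is to read off both implications from \lemref{mapsparse}, using nothing more than the matroidal structure of $(2,0)$-sparse graphs recalled above.

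For the ``only if'' direction I would argue directly, without even invoking \lemref{mapsparse}: suppose $G$ is $2$-orientable and fix an orientation in which every vertex is the tail of at most two edges. Given any edge-induced subgraph $G'$ on $n'$ vertices and $m'$ edges, restrict this orientation to $G'$. Each of the $m'$ edges can be charged to its tail, which is one of the $n'$ vertices of $G'$, and each vertex receives at most two charges, so $m' \le 2n'$. Hence $G$ is $(2,0)$-sparse.

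For the ``if'' direction I would use that, since $\ell = 0 \in [0,2k) = [0,4)$, the $(2,0)$-sparse edge sets on the fixed vertex set $V$ form a matroid (as recalled before \lemref{mapsparse}). Thus the edge set of a $(2,0)$-sparse graph $G$ is independent, so it extends to a basis of that matroid, i.e. a $(2,0)$-tight graph $\hat G$ on the same vertex set $V$ (one may have to add parallel edges to reach $2n$ edges when $n$ is small, which does not affect orientability). By \lemref{mapsparse}, $\hat G$ is a maximal $2$-orientable graph, and in particular $2$-orientable; restricting an out-degree-at-most-two orientation of $\hat G$ to the edges of $G$ yields the required orientation of $G$.

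The only place that needs a moment's care is the extension step in the second direction — one must explicitly invoke that $(2,0)$-sparsity defines a matroid so that independent edge sets extend to bases, and note that the extension lives among multigraphs (parallel edges are allowed), which is harmless since orientability depends only on the edge set. Beyond that the argument is pure bookkeeping; alternatively, one could bypass \lemref{mapsparse} and prove the ``if'' direction directly from Hall's Matching Theorem exactly as in the proof sketch of \lemref{mapsparse}, but quoting \lemref{mapsparse} as a black box is the cleaner route here.
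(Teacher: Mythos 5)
Your proof is correct and follows essentially the same route as the paper's: the forward direction is the tail-charging count, and the reverse direction extends $G$ inside the $(2,0)$-sparsity matroid to a $(2,0)$-tight graph and then restricts the orientation obtained from \lemref{mapsparse}. Your added care about multi-edges and self-loops in the extension step is a reasonable explicit observation that the paper's one-line proof leaves implicit.
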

\begin{proof}
If $G$ is $2$-orientable, than any subset $V'$ of $n'$ vertices is incident on, and thus induces, at most $2n'$ edges.  On
the other hand, if $G$ is $(2,0)$-sparse, extend it to being $(2,0)$-tight and then apply \lemref{mapsparse} to get the
required orientation.
\end{proof}

\paragraph{Henneberg Moves and $2$-orientability.}
\begin{figure}[htbp]
\centering
\subfigure[]{\includegraphics[width=0.45\textwidth]{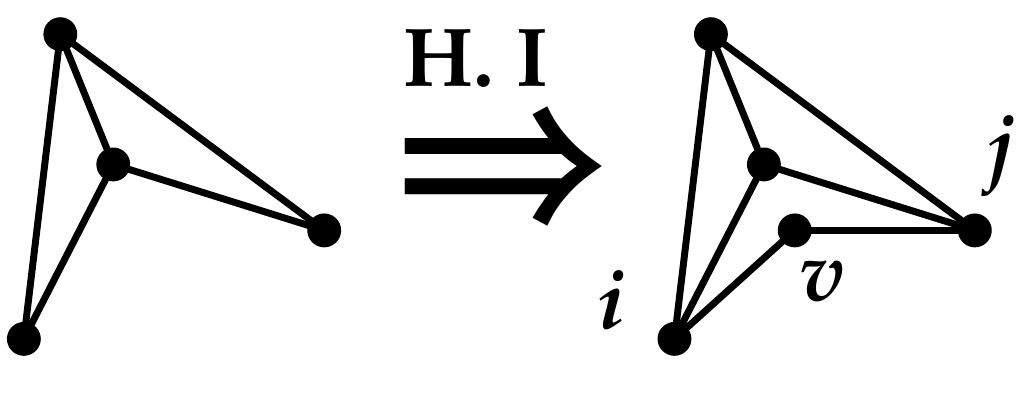}}
\subfigure[]{\includegraphics[width=0.45\textwidth]{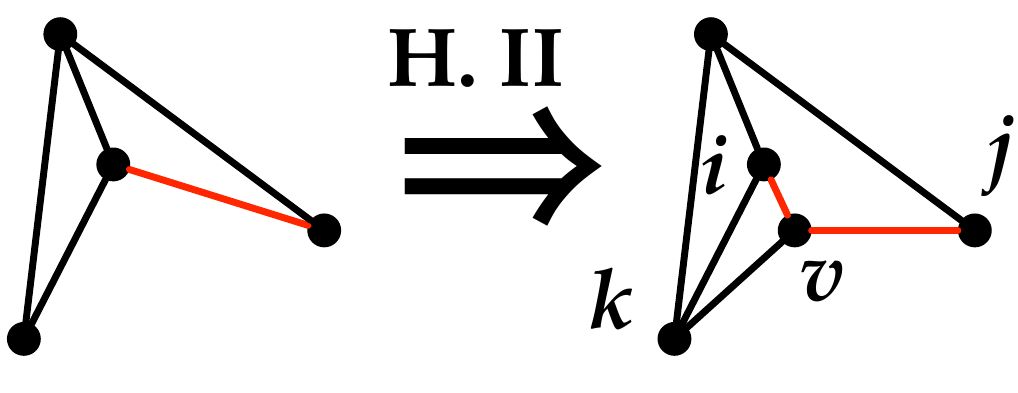}}
\caption{Examples of the Henneberg moves: (a) Henneberg I; (b) Henneberg II.  In (b), the
edge that is ``split,'' and the two new edges that replace it, are shown in red.}
\label{fig:henneberg}
\end{figure}
In our analysis of the $2$-orientation heuristic, we will make use
of so-called Henneberg moves, which give inductive characterizations of all $(k,\ell)$-sparse graphs.
Henneberg moves originate from \cite{hen} and are generalized to the entire family of $(k,\ell)$-sparse
graphs in \cite{streinu:lee:pebbleGames:2008}.  The moves are defined as follows:
\begin{description}
\item[\textbf{Henneberg I:}] Let $G$ be a graph on $n-1$ vertices.  Add a new vertex $v$ to $G$ and two
new edges to neighbors $i$ and $j$.
\item[\textbf{Henneberg II:}] Let $G$ be a graph on $n-1$ vertices, and let $ij$ be an edge in $G$.
Add a new vertex $v$ to $G$, select a neighbor $k$, remove the edge $ij$, and add edges between $v$ and $i$, $j$, and $k$.
\end{description}
\figref{henneberg} shows examples of the two moves.
Since we are concerned with $(2,0)$-sparsity, while $v$ must be new, the neighbors $i$, $j$, and $k$ may be the same as each other or $v$.  When
this happens we get self-loops or multiple copies of the same edge (i.e., a multigraph).  The
fact we need later is,
\begin{lemma}[{\cite[Lemma 10 and Lemma 17]{streinu:lee:pebbleGames:2008} or \cite{FR07}\footnote{Under the name ``excess degree reduction.''}}]
\label{lem:henneberg}
The Henneberg moves preserve $2$-orientability.
\end{lemma}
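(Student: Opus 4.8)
The plan is to argue directly with orientations, maintaining a witnessing orientation across each move; this is the most transparent route and sidesteps subtleties with the multigraphs the moves may create. So suppose $G$ is a graph on $n-1$ vertices equipped with an orientation in which every vertex has out-degree at most two, and let $G'$ be the graph obtained by a single Henneberg move.

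For Henneberg~I, the new vertex $v$ is incident in $G'$ only to the two newly added edges (those to $i$ and $j$), so orienting both of them out of $v$ leaves $v$ with out-degree exactly two while changing no other vertex's out-degree; hence $G'$ inherits a valid orientation. For Henneberg~II, recall the move deletes an edge $ij$ and adds $v$ together with three edges, to $i$, $j$, and $k$. In the given orientation of $G$, one of $i,j$ is the tail of $ij$; relabelling if necessary, say it is $i$. Deleting $ij$ drops the out-degree of $i$ to at most one. Now orient exactly the edge joining $v$ to $i$ into $v$, and the other two new edges out of $v$: then $v$ has out-degree at most two, $i$'s out-degree returns to at most two, and $j$ and $k$ only acquire incoming edges. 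Iterating along any sequence of Henneberg moves started from a $2$-orientable graph preserves the property, which is exactly what the heuristic uses.

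The only step requiring care — and the one I would treat as the main obstacle — is the bookkeeping in Henneberg~II when the targets $i,j,k$ (or $v$) coincide, so that the three new edges include parallel edges or a self-loop at $v$. One must check that, for each parallel class, the number of copies oriented into $v$ can be chosen so that $v$ still ends with out-degree at most two while $i$ still absorbs at most one extra out-edge; this is a short finite case check. An alternative proof, equally workable, replaces orientations by the characterization of \lemref{sparseorientable} and instead verifies that Henneberg moves preserve $(2,0)$-sparsity; there the one nonroutine case is a subgraph of $G'$ that contains $v$ together with all three of the edges $vi$, $vj$, $vk$, which one handles by adding the deleted edge $ij$ back to the subgraph it induces on the old vertices and invoking $(2,0)$-sparsity of $G$.
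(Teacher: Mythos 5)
Your argument is correct and is essentially the paper's own proof: for Henneberg~I orient both new edges out of $v$, and for Henneberg~II reverse along the split edge, i.e.\ $i\to v$, $v\to j$, $v\to k$, which preserves $i$'s out-degree and gives $v$ out-degree two. The paper's proof is equally terse and does not spell out the degenerate (self-loop/parallel-edge) cases you flag, but the same orientation rule handles them, so your extra caution is harmless rather than a divergence.
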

We give a proof for completeness, since we will use the proof idea later.
\begin{proof}
Assume $G$ is $2$-orientable.  For the Henneberg I move, orient the two new edges out of the new vertex $v$.  For the Henneberg II move,
suppose that $ij$ is oriented $i\to j$ in $G$.  Orient the new edges $i\to v$, $v\to j$, $v\to k$.
\end{proof}
We remark that although the development here follows along the lines
of the rigidity-inspired \cite{streinu:lee:pebbleGames:2008}, this idea was developed (to our knowledge)
independently by Fernholz and Ramachandran \cite{FR07}.

\paragraph{Almost Spanning Subgraphs.}  We introduce a final piece of notation, which is the
concept of an \emph{almost spanning graph}.  A graph $G$ on $n$ vertices
is defined to be \emph{almost $(2,0)$-spanning} if it contains a $(2,0)$-spanning subgraph on $n-o(n)$
vertices.

\paragraph{The Explosive Growth of Rigid Components.}
\begin{figure}[htbp]
\centering
\subfigure[]{\includegraphics[width=0.3\textwidth]{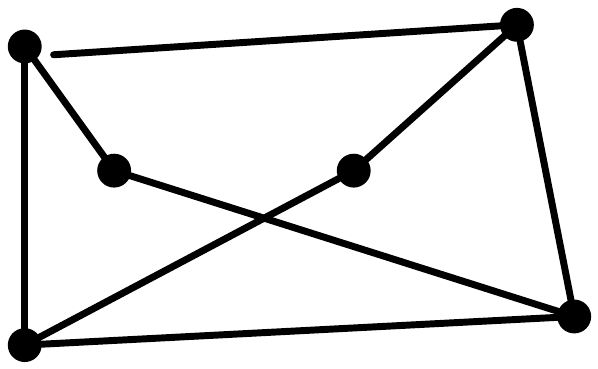}}
\subfigure[]{\includegraphics[width=0.3\textwidth]{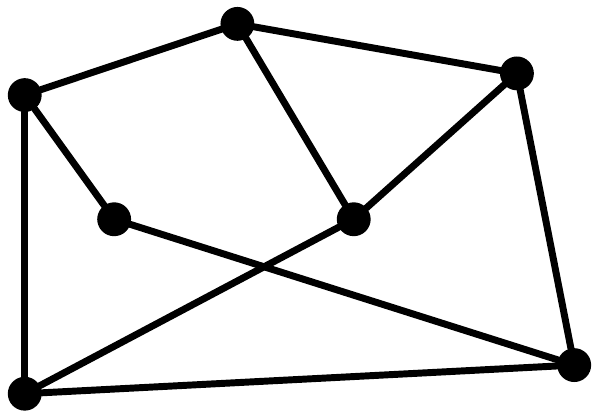}}
\subfigure[]{\includegraphics[width=0.3\textwidth]{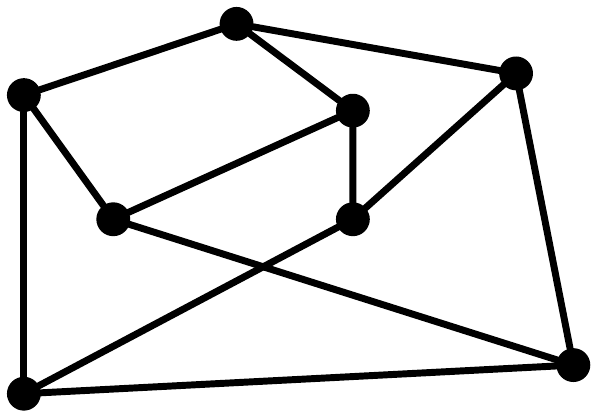}}
\caption{A family of graphs in which adding any edge rigidifies the entire graph.}
\label{fig:no-components-examples}
\end{figure}
We conclude this section with an example that shows how the behavior of rigid components can be \emph{very} different than
that of connectivity.  Unlike connectivity, the size of the largest rigid component in a graph may increase from $O(1)$ to $\Omega(n)$
after adding only one edge.  A dramatic family of examples is due to Ileana Streinu \cite{IS}.
We begin with the graph obtained by dropping an edge from $K_{3,3}$; this graph has $2 \cdot 6-4=8$ edges, and its
rigid components are simply the edges.  We then repeatedly apply the Henneberg II move, avoiding triangles whenever possible.
As we increase the number of vertices $n$, for even $n$, we obtain graphs with $2n-4$ edges and no rigid components spanning more than
two vertices (see Figure~\ref{fig:no-components-examples}(c)); but adding \emph{any} edge to these graphs results in a Laman-spanning graph.

This example can be interpreted as saying that rigidity is an inherently non-local phenomenon.

\section{Random graphs preliminaries} \label{sec:random-prelim}

Let $G(n,p)$ be a random graph on $n$ vertices where each edge appears independently of all others with probability $p$.
In this paper, we are interested in sparse random graphs, which are generated by $p=c/n$ for some constant $c$. This section
introduces the results from random graphs that we need as technical tools, along with our \theoref{almost} to prove the
main Theorem~\ref{theo:main}.

\paragraph{Size of the $3$-core.}
The\emph{ $k$-core} of a graph is defined as the maximal induced subgraph of minimum degree at least $k$.
The $k$-core thresholds for random graphs have been studied in~\cite{PSW,C04,M04,JL07}.
For $\mu >0$, let $Po(\mu)$ denote a Poisson random variable with mean $\mu$. Let us denote the Poisson probabilities by $\pi_j(\mu) = \Pr[Po(\mu)=j]$. Let $\psi_j(\mu) =  \Pr[Po(\mu) \geq j]$. Also, let $\lambda_k = \min_{\mu > 0}\mu/\psi_{k-1}(\mu).$
For $\lambda > \lambda_k$, let $\mu_k(\lambda) > 0$ denote the largest solution to
$\mu/\psi_{k-1}(\mu)=\lambda$. In~\cite{PSW}, Pittel, Spencer and Wormald discovered that for
$k \geq 3$, $\lambda=\lambda_k$ is the threshold for the appearance of a nonempty $k$-core in the
random graph $G(n,\lambda/n)$.

\begin{proposition}[Pittel, Spencer, and Wormald~\cite{PSW}] \label{prop:PSW}
Consider the random graph $G(n,\lambda/n)$, where $\lambda > 0$ is fixed. Let $k \geq 2$ be fixed. If $\lambda < \lambda_k$, then w.h.p.
the $k$-core is empty in $G(n,\lambda/n)$.
If  $\lambda > \lambda_k$, then w.h.p.
there exists a $k$-core in $G(n,\lambda/n)$ whose size is about $\psi_k(\mu_k(\lambda))n$.
\end{proposition}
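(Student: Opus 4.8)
The plan is to analyze the \emph{peeling} process that extracts the $k$-core of a graph: repeatedly delete a vertex of degree less than $k$ until none remains; the leftover graph is the $k$-core, independently of the deletion order. I would run this on $G(n,\lambda/n)$ --- equivalently, by a standard contiguity argument, on the configuration model with an asymptotically $Po(\lambda)$ degree sequence --- and track the degree sequence of the surviving graph. The whole calculation is organized around a fixed-point equation arising from the local Galton--Watson limit of $G(n,\lambda/n)$: a vertex lies in the core iff at least $k$ of its neighbours do, where a neighbour reached along an edge lies in the core iff at least $k-1$ of its \emph{other} neighbours do; since the number of those other neighbours is asymptotically $Po(\lambda)$ and each is ``good'' independently with probability $q$, the probability $q$ that a random edge points into a surviving subtree must satisfy $q=\psi_{k-1}(\lambda q)$. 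The substitution $\mu=\lambda q$ rewrites this as $\mu/\psi_{k-1}(\mu)=\lambda$. Since $\psi_{k-1}(0)=0$ for $k\ge 2$, $\mu=0$ is always a solution, and a positive solution exists exactly when $\lambda\ge\lambda_k=\min_{\mu>0}\mu/\psi_{k-1}(\mu)$; for $\lambda>\lambda_k$ I take the largest positive solution $\mu_k(\lambda)$. A vertex of the limit tree then lies in the core with probability $\Pr[Po(\lambda q)\ge k]=\psi_k(\mu_k(\lambda))$, which is the asserted density.

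For $\lambda<\lambda_k$ the only nonnegative root of $q=\psi_{k-1}(\lambda q)$ is $q=0$, so the relevant exploration is subcritical. I would bound the expected number of vertices surviving $t$ rounds of simultaneous peeling by a quantity tending to $0$ as $t\to\infty$, uniformly in $n$, by dominating it with a subcritical branching process on the local tree; this shows the $k$-core has $o(n)$ vertices in expectation. To upgrade this to ``w.h.p.\ empty'', note that a nonempty $k$-core must either span $\Omega(n)$ vertices or be contained in a subgraph on $n'\le\epsilon n$ vertices with at least $kn'/2$ edges, and a first-moment union bound over subgraphs rules out the latter w.h.p.

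For $\lambda>\lambda_k$ I would invoke the differential-equation method of Wormald. Running the stripping process on the configuration model and removing half-edges in matched pairs, the rescaled counts of remaining half-edges incident to vertices of each current degree concentrate around the solution of a system of ODEs, with natural time parameter the fraction of half-edges exposed; the surviving vertices have current degree approximately $Po(\mu)$ with an effective parameter $\mu$ that decreases from $\lambda$, and the trajectory integrates in closed form through the substitution above. The process halts precisely when the coordinate measuring the density of remaining removable (degree-below-$k$) half-edges first returns to $0$, which via the closed form happens at $\mu=\mu_k(\lambda)$; at that moment the surviving graph has $(1+o(1))\psi_k(\mu_k(\lambda))n$ vertices, all of degree at least $k$, and nothing more can be peeled, so it is the $k$-core. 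Concentration around the ODE solution follows from the bounded one-step changes of the process together with Wormald's theorem (or Azuma's inequality on the exposure martingale), and remains valid as long as the trajectory stays a positive distance from the halting boundary, which holds for every fixed $\lambda\ne\lambda_k$; the statement then transfers back to $G(n,\lambda/n)$ by contiguity and the concentration of its degree sequence.

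The main obstacle is the behaviour \emph{near} the threshold and the discontinuity it produces: for $k\ge 3$ the curve $\mu\mapsto\mu-\lambda\psi_{k-1}(\mu)$ has a double root at $\lambda=\lambda_k$, so $\mu_k(\lambda)$ --- and hence the core density $\psi_k(\mu_k(\lambda))$ --- jumps from $0$ to a positive constant as $\lambda$ crosses $\lambda_k$. Making the error terms in the ODE method uniform enough to follow this jump cleanly, and carrying out the first-moment exclusion of stray dense small subgraphs in the subcritical regime, are the delicate steps; away from the threshold every relevant quantity is a fixed distance from degeneracy and the analysis is routine.
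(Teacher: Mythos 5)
This Proposition is quoted verbatim from Pittel, Spencer, and Wormald~\cite{PSW}; the paper supplies no proof, so there is nothing internal to compare your sketch against. On its own terms, your outline is a correct and recognizable summary of the standard argument (it closely resembles not only PSW but the later streamlined treatments of Molloy, Cooper, and Janson--Luczak): the local Galton--Watson heuristic giving the fixed point $q=\psi_{k-1}(\lambda q)$, the substitution $\mu=\lambda q$ yielding $\mu/\psi_{k-1}(\mu)=\lambda$ and thereby $\lambda_k$ and $\mu_k(\lambda)$, the core density $\psi_k(\mu_k(\lambda))$, and the use of the configuration model plus Wormald's differential-equation method to track the peeling process are all exactly the right moving parts. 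The two places you flag as requiring care --- the first-moment union bound ruling out small dense subgraphs in the subcritical regime, and concentration of the stripping trajectory near the halting boundary (especially because for $k\ge 3$ the curve $\mu-\lambda\psi_{k-1}(\mu)$ has a double root at $\lambda=\lambda_k$, producing a first-order jump) --- are indeed where essentially all of the technical work in PSW lives, and a complete proof would need to carry them out. As a blind reconstruction of a cited black-box result, this is a faithful and accurate roadmap rather than a full proof, which is the appropriate level of detail here.
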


We will use the $k=3$ instance of \propref{PSW}. Substituting $k=3$ in the above equation gives
$\lambda_3 \approx 3.351$, and the size of the 3-core at $\lambda=\lambda_3$ is about $0.27n$.

\paragraph{The $(3+2)$-core.}
Extending the $3$-core, we define the $3+2$-core of a graph as the maximal subgraph that can be
constructed starting from the $3$-core and inductively adding vertices with at least two neighbors in the
subgraph built so far.

From the definition, it is easy to see that the $(3+2)$-core emerges when the $3$-core does, and,
since it contains the $3$-core, it it, w.h.p., empty or giant in a $G(n,p)$.  A branching
process heuristic indicates that, after it emerges, the fraction of the vertices in the $(3+2)$-core
is the root $q$ of the equation $q = 1-\e^{-qc} (1+qc)$ (where $\e^{-qc} (1+qc)$ comes from
$\Pr[Po(qc) < 2]$).  However, we do not know how to make the argument rigorous, and so we leave it as a conjecture.
At $c=3.588$ the conjectured number of vertices in the $(3+2)$-core is about $0.749n$.

\paragraph{The $2$-orientability Threshold.} Define the constant $c_2$ to be the
supremum of $c$ such that the $3$-core of  $G(n,c/n)$ has average degree at most $4$.

Fernholz and Ramachandran and Cain, Sanders, and Wormald independently proved that $c_2$ is the threshold
for $2$-orientability of $G(n,c/n)$.

\begin{proposition}[\cite{FR07,CSW07}] \label{prop:FR}
With high probability, for any fixed constant $c>0$:
\begin{itemize}
\item If $c < c_2$, $G(n,c/n)$ is $2$-orientable.
\item If $c > c_2$, $G(n,c/n)$ is not $2$-orientable.
\end{itemize}
\end{proposition}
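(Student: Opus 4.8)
The plan is to derive both directions from the sparsity criterion of \lemref{sparseorientable}: a graph is $2$-orientable if and only if it is $(2,0)$-sparse, i.e.\ every set of $n'$ vertices spans at most $2n'$ edges. The direction $c>c_2$ is then short. By the definition of $c_2$ together with the concentration of the order \emph{and} size of the $3$-core (both of which follow from the analysis behind \propref{PSW}), w.h.p.\ the $3$-core of $G(n,c/n)$ has some $n_3$ vertices but strictly more than $2n_3$ edges; its vertex set is a witness that $(2,0)$-sparsity fails, so by \lemref{sparseorientable} $G(n,c/n)$ is w.h.p.\ not $2$-orientable. (Equivalently, via the Hall-type criterion appearing in the proof of \lemref{mapsparse}, the bipartite ``edges versus two copies of each vertex'' graph then has no perfect matching.)

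For $c<c_2$ we must show $G(n,c/n)$ is w.h.p.\ $2$-orientable, and the first move is to reduce to the $3$-core. One implication is free: $(2,0)$-sparsity is hereditary, so if $G$ is $2$-orientable then so is its $3$-core. For the converse, recall that the $3$-core is obtained from $G$ by iteratively deleting vertices of degree at most $2$; reversing this process, $G$ is rebuilt from its $3$-core (or from the empty graph, if the $3$-core is empty) by a sequence of vertex additions in which each new vertex is joined by at most two edges to the current graph. Orienting those edges out of the new vertex preserves the property ``all out-degrees $\le 2$'' --- this is exactly the Henneberg~I case of \lemref{henneberg}, together with the trivial pendant and isolated-vertex cases. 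Hence it suffices to $2$-orient the $3$-core; if it is empty (which occurs for $c<\lambda_3$) there is nothing to prove, and for $\lambda_3<c<c_2$ the definition of $c_2$ and the concentration above give that, w.h.p., it has at most $(2-\delta)n_3$ edges for some constant $\delta=\delta(c)>0$.

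To $2$-orient the $3$-core we run (a version of) the excess-degree-reduction heuristic of Fernholz and Ramachandran. Conditioned on its degree sequence, the $3$-core is distributed essentially as a uniform random graph with that sequence, which we analyze in the configuration model. The heuristic exposes edges one at a time, maintaining a partial orientation with all out-degrees $\le 2$; when exposing an edge $uv$ would force out-degree $3$ at $u$ (say), it reverses the orientations along a directed path from $u$ to a vertex of current out-degree $\le 1$ --- a path reversal changes no out-degree except lowering $u$'s and raising the far endpoint's, the orientation-theoretic analogue of a Henneberg~II edge split --- and it stalls only when the edges exposed so far are already not $(2,0)$-sparse (a short reachability argument, using \lemref{sparseorientable}). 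The run is tracked by the differential-equation (fluid-limit) method: the state records how many already-processed vertices have out-degree $0,1,2$ and how many configuration half-edges remain unexposed, and the search for spare capacity is a branching process whose offspring mean is controlled by the current density of out-degree-$2$ vertices among neighbors. The heart of the proof is that this branching process stays \emph{subcritical} throughout the run exactly because the residual edge-to-vertex ratio never reaches $2$ --- which is where $c<c_2$ (equivalently, $3$-core average degree $<4$) enters; subcriticality forces each augmenting search to touch $O(\log n)$ vertices w.h.p., so no linear-sized obstruction ever forms. (Alternatively one may follow Cain--Sanders--Wormald and analyze an optimal orientation directly through a differential-equation-governed load/peeling process on the $3$-core; the conclusion is the same.)

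The main obstacle is the linear-scale part of this last step. A first-moment union bound over vertex subsets cheaply rules out any ``dense'' $S$ with $e(S)>2|S|$ of size $o(n)$, but it cannot be pushed to linear-sized $S$: the density of the $3$-core itself tends to $2$ as $c\uparrow c_2$, so the bound degenerates exactly at the threshold, which is why the algorithmic argument is needed. Making it rigorous requires (i) choosing state variables for which the process is asymptotically Markov in the configuration model, (ii) deriving and solving the limiting ODEs, (iii) verifying the drift/subcriticality inequalities at every time at which the residual density is below $2$ --- the step that pins the threshold at $c_2$ --- (iv) transferring the ODE trajectory to the random process via a concentration theorem (Wormald's differential-equation method, or Azuma/Freedman bounds), and (v) closing the gap in the regime where only $o(n)$ edges remain and the fluid limit is uninformative, using the first-moment bound on small dense subsets, which are exactly what would stall the heuristic there by the reachability characterization above. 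Step (iii) is the crux and is what makes the transition sharp.
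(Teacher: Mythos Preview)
The paper does not prove \propref{FR} at all: it is stated as a citation to \cite{FR07,CSW07} and used as a black box, with no accompanying proof or sketch. So there is nothing to compare your proposal against within this paper.

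That said, your outline is a reasonable summary of how those references actually establish the result. The $c>c_2$ direction is exactly as you say: by the definition of $c_2$ and concentration of the $3$-core's order and size, the $3$-core has average degree exceeding $4$ w.h.p., which immediately violates $(2,0)$-sparsity and hence $2$-orientability via \lemref{sparseorientable}. For $c<c_2$, your reduction to the $3$-core by reversing the peeling process (orienting the two new edges out of each re-added vertex) is correct and is the standard first step. Your description of the remaining argument --- an edge-exposure heuristic tracked by the differential-equation method, with the key subcriticality check tied to the residual density staying below $2$ --- is broadly faithful to \cite{FR07}. One minor inaccuracy of phrasing: the Fernholz--Ramachandran ``excess degree reduction'' is not really an augmenting-path/reversal scheme in the matching sense; it is closer to the Henneberg-move viewpoint used later in this very paper (Sections~\ref{sec:conf}--\ref{sec:algo}), where degree-$2$ and degree-$3$ vertices are processed via the FR~I and FR~II configuration moves. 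The analytic content (subcritical branching controlled by the evolving degree profile, ODEs, Wormald's method, and a separate first-moment argument for the $o(n)$ tail) is as you describe.
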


\paragraph{Rigid Components are Small or Giant.} An edge counting argument ruling out small
dense subgraphs in $G(n,c/n)$ shows the following fact about rigid components in random
graphs.
\begin{proposition}[Theran~\cite{LT09}] \label{prop:LT}
Let $c > 0$ be a constant. Then, w.h.p., all rigid components in a random graph $G(n,c/n)$ have size $1$, $2$, $3$,
or $\Omega(n)$.
\end{proposition}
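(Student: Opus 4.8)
The plan is to combine the hereditary sparsity structure of rigid components recalled in \secref{rigid-prelim} with a first-moment bound that rules out small ``locally dense'' subgraphs of $G(n,c/n)$. The starting observation is that a rigid component on $n'$ vertices is, by definition, a vertex-induced Laman-spanning subgraph, hence contains a $(2,3)$-tight spanning subgraph, and therefore $G$ induces at least $2n'-3$ edges on that vertex set. For $n'\le 3$ this is no restriction at all --- a triangle already has exactly $2\cdot 3-3$ edges --- which is precisely why the sizes $1,2,3$ must be carved out of the conclusion. So the first step is to reduce the proposition to the statement: there is a constant $\epsilon=\epsilon(c)>0$ such that, w.h.p., $G(n,c/n)$ contains no vertex set $S$ with $4\le|S|\le\epsilon n$ inducing at least $2|S|-3$ edges. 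Granting this, a rigid component of size $n'\ge 4$ would violate it unless $n'>\epsilon n$, so every rigid component has size at most $3$ or size at least $\epsilon n=\Omega(n)$.

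For the reduced statement I would run a union bound over vertex sets and edge sets. For fixed $|S|=n'$, the expected number of offending sets is at most $\binom{n}{n'}\binom{\binom{n'}{2}}{2n'-3}(c/n)^{2n'-3}$: choose the $n'$ vertices, choose which $2n'-3$ of the $\binom{n'}{2}$ internal pairs are present, and pay $(c/n)^{2n'-3}$ for those edges. Using $\binom{n}{n'}\le(\e n/n')^{n'}$ and, since $2n'-3\ge n'$ for $n'\ge 4$, $\binom{\binom{n'}{2}}{2n'-3}\le(\e n'/2)^{2n'-3}$, and then collecting the powers of $n$, this bound simplifies to $T_{n'}:=\e^{n'}(\e c/2)^{2n'-3}(n'/n)^{n'-3}$. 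The task then becomes to check $\sum_{n'=4}^{\lfloor\epsilon n\rfloor}T_{n'}=o(1)$.

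This last sum is the heart of the argument. For $n'\le\epsilon n$, the ratio $T_{n'+1}/T_{n'}$ is bounded --- after using $(1+1/n')^{n'}\le\e$ --- by $(\e^4c^2/4)(n'/n)\le(\e^4c^2/4)\epsilon$. Choosing $\epsilon=\epsilon(c)$ small enough that this is at most $1/2$ makes the terms $T_{n'}$ decay geometrically from $n'=4$ onward, so $\sum_{n'=4}^{\lfloor\epsilon n\rfloor}T_{n'}\le 2T_4$. Since $T_4=\Theta(1/n)$ (the expected number of $4$-subsets inducing at least $5$ edges is $\Theta(1/n)$), the full sum is $O(1/n)=o(1)$, and Markov's inequality yields the reduced statement w.h.p.

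The step I expect to require the most care is the choice of $\epsilon$: it has to simultaneously push the geometric base strictly below $1$ in the regime $n'=\Theta(n)$ --- which is exactly what forces $\epsilon$ to shrink as $c$ grows, so that for larger $c$ one can only exclude smaller dense subgraphs --- and, through the $T_4=\Theta(1/n)$ estimate, dominate the polynomial-in-$n$ prefactor in the small-$n'$ regime. By contrast, the rigidity input --- that a Laman-spanning graph on $n'$ vertices has at least $2n'-3$ edges --- is immediate from the matroidal facts recalled in \secref{rigid-prelim}, so it poses no genuine difficulty.
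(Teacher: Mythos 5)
The paper does not reprove this proposition; it cites Theran~\cite{LT09} and characterizes that proof as ``an edge counting argument ruling out small dense subgraphs.'' Your first-moment/union-bound argument---reducing to the nonexistence of $S$ with $4\le|S|\le\epsilon n$ inducing $\ge 2|S|-3$ edges, then summing $\binom{n}{n'}\binom{\binom{n'}{2}}{2n'-3}(c/n)^{2n'-3}$---is exactly that edge-counting argument, and the calculation checks out, so this is essentially the same approach.
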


\section{Graph-theoretic lemmas} \label{sec:graphs}
This short section gives the combinatorial lemmas we need to derive \theoref{main} from \theoref{almost}.
The first is a simple observation relating adding edges to the span of a rigid component and $2$-orientability.
\begin{lemma}\label{lem:adding-edges-to-laman-blocks}
Let $G=(V,E)$ be a graph and let $G'=(V',E')$ be a rigid component of $G$.  Then after adding any $4$ edges
to the span of $V'$, the resulting graph is not $2$-orientable.
\end{lemma}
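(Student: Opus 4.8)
The plan is to show that a rigid component $G'=(V',E')$ on $n'$ vertices, once we add $4$ edges inside $V'$, contains an induced subgraph that violates the $(2,0)$-sparsity count of Lemma~\ref{lem:sparseorientable}, which is equivalent to $2$-orientability. First I would pass to a Laman basis of $G$; by Lemma~\ref{lem:component-decomp} the rigid component $G'$ is independent of this choice, and on $V'$ the basis induces a Laman graph, so $V'$ spans exactly $2n'-3$ edges in the basis. Now $G$ itself may contain more edges on $V'$ than the basis does, but since $G'$ is a rigid \emph{component} of $G$ (a maximal block), and a block on $V'$ is exactly a vertex-induced Laman-spanning subgraph, every edge of $G$ with both endpoints in $V'$ already lies in $G'$ — so in fact $G$ restricted to $V'$ has at least $2n'-3$ edges. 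Adding $4$ more edges to the span of $V'$ therefore produces an induced subgraph on $V'$ with at least $2n'-3+4 = 2n'+1 > 2n'$ edges.

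The second step is to invoke Lemma~\ref{lem:sparseorientable}: a graph is $2$-orientable if and only if it is $(2,0)$-sparse, meaning every subset of $n''$ vertices spans at most $2n''$ edges. Since the resulting graph has a vertex-induced subgraph (on $V'$) with strictly more than $2n'$ edges, it is not $(2,0)$-sparse, hence not $2$-orientable. This completes the argument.

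The one point requiring a little care — and the main (mild) obstacle — is the claim that all $G$-edges on $V'$ already belong to the component $G'$, so that adding $4$ \emph{new} edges to the span of $V'$ really pushes the induced count past $2n'$. This is where maximality of the rigid component is used: if some edge $ij$ of $G$ had $i,j \in V'$ but $ij \notin E'$, then $E' \cup \{ij\}$ would still be Laman-spanning on $V'$ (adding an edge to a Laman-spanning subgraph keeps it Laman-spanning), so $G'$ together with that edge is still a block on the same vertex set, contradicting nothing by itself — but the real content is simply that the edge set of the component is, by definition, \emph{all} edges of $G$ induced on $V'$ when we regard components as vertex sets carrying all their induced edges. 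With the conventions set up before Lemma~\ref{lem:component-decomp} (components partition the edge set, each edge in exactly one component, components on $V'$ carry the induced edge set), this is immediate, so the whole proof is short: peel to a Laman basis, count $2n'-3$ induced edges, add $4$, exceed $2n'$, and apply Lemma~\ref{lem:sparseorientable}.
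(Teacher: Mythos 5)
Your proof is correct and follows essentially the same route as the paper: the rigid component $G'$ has a Laman basis, so $V'$ spans at least $2n'-3$ edges; adding $4$ more yields at least $2n'+1 > 2n'$ edges induced on $V'$, which violates $(2,0)$-sparsity and hence $2$-orientability by \lemref{sparseorientable}. The worry you raise at the end about whether all $G$-edges on $V'$ lie in $G'$ is not actually needed for the count (the Laman basis alone gives $2n'-3$ edges and the $4$ added edges are new), and in any case it holds immediately because a rigid block is by definition \emph{vertex-induced}.
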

\begin{proof}
Let $V'$ have $n'$ vertices.  Since $G'$ is a rigid component, it has a Laman basis by definition and thus spans at least $2n'-3$ edges.   After
the addition of $4$ edges to the span of $V'$, it spans at least $2n'+1$ edges, blocking $2$-orientability by \lemref{sparseorientable}.
\end{proof}

Another simple property we will need is that if $G$ is not Laman-sparse it spans a rigid component with non-empty $(3+2)$-core.
\begin{lemma}\label{lem:component-3plus2core}
Let $G$ be a simple graph that is not Laman-sparse.  Then $G$ spans a rigid component on at least four vertices
that is contained in the $(3+2)$-core.
\end{lemma}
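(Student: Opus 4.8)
The plan is to manufacture the required component with Lemma~\ref{lem:circuits} and then certify that it sits inside the $(3+2)$-core by running the degree-two peeling of Lemma~\ref{lem:peel-degree-two} in reverse; that reversal is exactly the process that defines the $(3+2)$-core.

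First, since $G$ is not Laman-sparse, some vertex-induced subgraph of $G$, on $n'$ vertices say, spans more than $2n'-3$ edges, so applying Lemma~\ref{lem:circuits} to it produces a vertex set $B$ with $|B|\ge 4$ for which $G[B]$ is Laman-spanning and has minimum degree at least three. Since the $3$-core of $G$ is the maximal induced subgraph of minimum degree at least three, $B$ is contained in it. I then take $C$ to be a rigid block of $G$ that contains $B$ and is maximal, under vertex-set inclusion, among rigid blocks containing $B$; such a $C$ is an inclusion-wise maximal rigid block, i.e.\ a rigid component, and $|C|\ge|B|\ge 4$, which already yields the first conclusion.

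Next I would show $C\subseteq(3+2)$-core by peeling. Set $C_0=C$, and while $G[C_j]$ has a vertex $v_j$ of degree exactly two, put $C_{j+1}=C_j\setminus\{v_j\}$. By Lemma~\ref{lem:peel-degree-two} every $G[C_j]$ stays Laman-spanning, and by Lemma~\ref{lem:laman-spanning-degrees} no Laman-spanning graph has a vertex of degree less than two, so the process halts only at a set $C_\ell$ for which $G[C_\ell]$ has minimum degree at least three, i.e.\ $C_\ell$ lies in the $3$-core. The crucial observation is that the peeling never deletes a vertex of $B$: throughout we have $B\subseteq C_j$, hence $G[C_j]\supseteq G[B]$, so every vertex of $B$ keeps degree at least three in $G[C_j]$ and is never eligible to be $v_j$. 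Consequently $\emptyset\ne B\subseteq C_\ell$ (which also keeps us clear of the degenerate small cases of the sparsity counts), and one has $C_{j+1}=C_\ell\cup\{v_{j+1},\dots,v_{\ell-1}\}$ while $C=C_\ell\cup\{v_0,\dots,v_{\ell-1}\}$.

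Finally I would reverse the peeling. Each $v_j$, having degree exactly two in $G[C_j]$, has both of its neighbors in $C_{j+1}=C_j\setminus\{v_j\}$; so, starting from the $3$-core (which contains $C_\ell$) and adjoining $v_{\ell-1},v_{\ell-2},\dots,v_0$ in that order is a legal run of the $(3+2)$-core construction, because when $v_j$ is added its two neighbors already lie in the current set, namely the $3$-core together with $\{v_{j+1},\dots,v_{\ell-1}\}$, which contains $C_{j+1}$. Hence the union of $C$ with the $3$-core is one of the subgraphs produced by that construction and therefore lies in the $(3+2)$-core, giving $C\subseteq(3+2)$-core. The step that needs the most care — and where a careless version of the argument collapses — is precisely the bookkeeping of the last two paragraphs: that the peeling never touches $B$, and that its reversal is a construction from the entire $3$-core rather than merely from $C_\ell$. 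Both rest on Lemma~\ref{lem:circuits} supplying minimum degree three, not just two.
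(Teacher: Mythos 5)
Your proposal is correct and follows essentially the same route as the paper: invoke Lemma~\ref{lem:circuits} to obtain a block of minimum degree three (hence inside the $3$-core), pass to the rigid component containing it, and use Lemmas~\ref{lem:laman-spanning-degrees} and~\ref{lem:peel-degree-two} to peel degree-two vertices down into the $3$-core, which certifies membership in the $(3+2)$-core. The paper compresses the peeling-and-reversal bookkeeping into one sentence, whereas you spell it out carefully (in particular, that the peeling never reaches $B$ and that the reversal is legitimately a run of the $(3+2)$-core construction starting from the full $3$-core rather than just from $C_\ell$) — but the underlying argument is the same.
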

\begin{proof}
Since $G$ is simple and fails to be Laman sparse, the hypothesis of \lemref{circuits} is met, so there is a
rigid block in $G$ with a non-empty $3$-core.  The component containing this block has minimum degree
two by \lemref{laman-spanning-degrees}, and peeling off degree two vertices will never result
in a degree one vertex by \lemref{peel-degree-two}, so it is in the $(3+2)$-core.
\end{proof}

We conclude with the main graph-theoretic lemma we need.
\begin{lemma}\label{lem:mainlemma}
Let $G=(V,E)$ be a simple graph that:
\begin{itemize}
\item coincides with its $(3+2)$-core;
\item spans a rigid component $G'=(V',E')$ on  $n'\ge 4$ vertices;
\item and the set of $n''$ vertices $V'' = V\setminus V'$ is incident on at least $2n''$ edges.
\end{itemize}
Then at least one of the following is true
\begin{itemize}
\item $G$ is Laman-spanning
\item $G$ spans a rigid component other than $G'$ on at least $4$ vertices
\end{itemize}
\end{lemma}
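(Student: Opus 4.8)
The plan is to exploit the maximality of $G'$ as a rigid block, together with \lemref{circuits}, after contracting $G'$ to a single vertex. If $V''=\emptyset$ the first alternative holds trivially: then $V=V'$, and $G'$, being a rigid component, is a Laman-spanning subgraph on all of $V$. So assume $n'':=|V''|\ge 1$; I will produce a rigid component of $G$ other than $G'$ on at least four vertices.

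First I would observe that every vertex $v\in V''$ has at most one neighbor in $V'$: if it had two distinct neighbors $u_1,u_2\in V'$, then adjoining $vu_1,vu_2$ to a Laman basis of $G[V']$ is a Henneberg~I move, which preserves Laman-sparsity by a routine sparsity count (cf.\ \lemref{henneberg}), so $V'\cup\{v\}$ would be a rigid block properly containing $V'$, contradicting that $G'$ is a maximal rigid block (\lemref{component-decomp}). Hence contracting $V'$ to a single new vertex $w$ yields a \emph{simple} graph $\hat G$ on $n''+1$ vertices whose edge set corresponds bijectively to the set of edges of $G$ incident to $V''$. By hypothesis $|E(\hat G)|\ge 2n''=2(n''+1)-2>2(n''+1)-3$, so $\hat G$ is not Laman-sparse; choosing, as in the proof of \lemref{circuits}, an edge-minimal non-Laman-sparse subgraph $\hat B\subseteq\hat G$ on as few vertices as possible, we obtain $\nu:=|V(\hat B)|\ge 4$ with $\hat B$ Laman-spanning on its vertex set, $|E(\hat B)|=2\nu-2$, and every vertex of $\hat B$ of degree at least three.

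Now split on whether $w\in V(\hat B)$. If $w\notin V(\hat B)$ then $\hat B$ is a subgraph of $G[V'']$, so $V(\hat B)\subseteq V''$ is a rigid block of $G$ on $\nu\ge 4$ vertices disjoint from $V'$; the (unique maximal) rigid component of $G$ containing it then has at least four vertices and, having a vertex outside $V'$, differs from $G'$. If instead $w\in V(\hat B)$, set $S:=V(\hat B)\setminus\{w\}\subseteq V''$, so $|S|=\nu-1\ge 3$ and $\deg_{\hat B}(w)\ge 3$; fix a Laman basis $B_0$ of $G[V']$, and for each edge $ws\in E(\hat B)$ let $su_s$ (with $u_s\in V'$) be the edge of $G$ from which it was contracted. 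Form the subgraph
\[
G^{*}\;:=\;B_0\;\cup\;\hat B[S]\;\cup\;\bigl\{\,su_s:ws\in E(\hat B)\,\bigr\}
\]
of $G$, on vertex set $V'\cup S$; its three parts are pairwise edge-disjoint, so
\[
|E(G^{*})|\;=\;(2n'-3)+\bigl(|E(\hat B)|-\deg_{\hat B}(w)\bigr)+\deg_{\hat B}(w)\;=\;(2n'-3)+(2\nu-2)\;=\;2\,|V'\cup S|-3 .
\]
If $G^{*}$ is Laman-sparse it is Laman-tight, hence a Laman-spanning subgraph of $G[V'\cup S]$, making $V'\cup S$ a rigid block of $G$ properly containing $V'$ --- impossible, since $G'$ is a maximal rigid block. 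So $G^{*}$ is not Laman-sparse, and \lemref{circuits} gives a rigid block $B'$ of $G^{*}$ (hence of $G$) on at least four vertices that is not Laman-sparse. As $G^{*}$ induces exactly $B_0$ on $V'$ and $B_0$ is Laman-sparse, $V(B')$ is not contained in $V'$; thus $B'$ uses a vertex of $S\subseteq V''$, and the rigid component of $G$ containing $B'$ has at least four vertices and differs from $G'$, as required.

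The step I expect to be the main obstacle is the edge count in the case $w\in V(\hat B)$: replacing the contracted vertex $w$ by the rigid body $G'$ naively leaves one surplus degree of freedom (a graph vertex carries two degrees of freedom, a rigid planar body carries three), which would make $V'\cup S$ one edge short of rigid. This is cancelled exactly by the fact that $\hat B$ is a \emph{circuit}, carrying $2\nu-2$ edges rather than the $2\nu-3$ of a Laman graph, so that routing all of $\hat B$'s edges through the substitution lands $G^{*}$ precisely on the Laman edge count --- which is what lets the Laman-sparse dichotomy run without any hereditary-sparsity bookkeeping. (Beyond $G'$ being a maximal rigid block and the stated edge bound on $V''$, the argument does not seem to require the hypotheses that $G$ coincides with its $(3+2)$-core or that $n'\ge 4$; these match the way the lemma is invoked in \secref{thresh}.)
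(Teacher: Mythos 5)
Your proof is correct, but it takes a genuinely different route from the paper's. The paper's argument is to prune the edges inside $V'$ down to a Laman basis $B_0$, forming a subgraph $H$ of $G$ with the same rigid components as $G$ and at least $2n'-3+2n''=2n-3$ edges; if $H$ is Laman-spanning we are done, and otherwise $H$ is not Laman-sparse, so \lemref{circuits} directly produces a non-Laman-sparse block of $H$, which lies in a rigid component distinct from $G'$ because $H[V']=B_0$ is Laman-sparse. You instead contract $V'$ to a single vertex $w$, first establishing that the contracted graph $\hat G$ is simple via the Henneberg~I maximality observation that every $V''$-vertex has at most one $V'$-neighbor, then invoke \lemref{circuits} in $\hat G$ and do a case split on whether $w$ lies in the resulting block, with an explicit circuit-for-vertex substitution to lift the result back into $G$. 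Both arguments are sound. The paper's approach is shorter and requires no lifting bookkeeping, since it stays inside $G$ the whole time; yours costs a case analysis and a degree-of-freedom count (replacing the contracted vertex by a rigid body, cancelled exactly by $\hat B$ being a circuit rather than a Laman graph), but makes the structural fact about $V''$-to-$V'$ adjacency explicit, which is a nice by-product. You are also correct that neither proof makes essential use of the hypothesis that $G$ coincides with its $(3+2)$-core; that hypothesis simply matches how the lemma is invoked in \secref{thresh}.
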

\begin{proof}
Pick a Laman basis for $G'$ and discard the rest of the edges spanned by $V'$.  Call the remaining
graph $H$.  Observe that $G$ and $H$ have the same rigid components.
By hypothesis, $H$ now has at least $2n'-3+2n''=2n-3$ edges.  If $H$ is Laman-spanning
we are done, so we suppose the contrary and show that this assumption implies the second conclusion.

Because $H$ is not Laman-spanning and has $2n-3$ edges, it must not be Laman-sparse.  By \lemref{circuits},
$H$ spans a rigid block that is not Laman-sparse, and this block must be contained in some
rigid component $H'$ of $H$.  Finally, since $V'$ induces a Laman-sparse rigid component of $H$ and $H'$ is
a rigid component that isn't Laman-sparse, $H'$ and $G'$ are different rigid components of $H$ and thus $G$.
\end{proof}

\section{Proof of the Main \theoref{main}} \label{sec:thresh}

In this section, we prove our main theorem:
\maintheorem

\paragraph{Roadmap.} The structure of this section is as follows.  We start by establishing that the
constant $c_2$ is the sharp threshold for giant rigid components to emerge.  This is done in two steps:
\begin{itemize}
\item That there is a giant rigid component, w.h.p., when $c>c_2$ is the easier direction, coming from
counting the number of edges in the $3$-core using \propref{FR}.  (\lemref{thresh-gec2})
\item The more difficult direction is that when $c < c_2$ all components are w.h.p. size two or
three is proved using the following idea: if there is a giant rigid component, adding $\Theta(1)$
more random edges will block $2$-orientability, contradicting \propref{FR}. (\lemref{thresh-lec2})
\end{itemize}

The idea of the proof of the size of the giant rigid component is to apply the main
combinatorial \lemref{mainlemma} to the $(3+2)$-core of $G(n,c/n)$ after adding a small
number of uniform edges.  This is possible as a consequence of the more technical
\theoref{almost}. Since only the first conclusion of \lemref{mainlemma} is compatible with \propref{LT}, w.h.p.,
the presence of a large enough giant rigid component follows.
Before we can do that we establish two important structural properties:
\begin{itemize}
\item There is a unique giant rigid component, w.h.p., (\lemref{giant-component-unique})
\item It is contained in the $(3+2)$-core (\lemref{in-3p2-core})
\end{itemize}
With these results, \lemref{giant-component-size} formalizes the plan described above, and \theoref{main} follows.

\paragraph{Sharp Threshold.} We first establish that $c_2$ is the sharp threshold for emergence of giant rigid components.
This is done in the next two lemmas, starting with the more difficult direction.
\begin{lemma}\label{lem:thresh-lec2}
Let $c < c_2$.  Then w.h.p, $G(n,c/n)$ has only rigid components of size at most three.
\end{lemma}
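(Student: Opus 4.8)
The plan is to argue by contradiction: assume that with probability bounded away from zero $G(n,c/n)$ has a rigid component spanning at least four vertices. By Proposition~\ref{prop:LT} (Theran's lemma), such a component must then in fact be giant, spanning $\Omega(n)$ vertices. By Lemma~\ref{lem:component-3plus2core} this component lies in the $(3+2)$-core, and in particular the $(3+2)$-core (hence the $3$-core) is non-empty, so $c > \lambda_3$. I would then add a small number of extra uniform random edges on top of $G(n,c/n)$ and show that this destroys $2$-orientability, contradicting Proposition~\ref{prop:FR} (since $c < c_2$ and the extra edges only change the edge probability to $c'/n$ for some $c' < c_2$ if we add $o(n)$ of them, or we may add $\Theta(1)$ edges chosen uniformly among non-edges).

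First I would fix a giant rigid component $G'$ on $n' = \Omega(n)$ vertices, guaranteed (on the bad event) by the assumption together with Proposition~\ref{prop:LT}. The key observation is Lemma~\ref{lem:adding-edges-to-laman-blocks}: adding any $4$ edges to the span of $V'$ produces a graph that is not $2$-orientable. So it suffices to show that, w.h.p.\ conditioned on the bad event, $4$ additional uniform random edges all land inside $V'$ — or, more robustly, that among a constant number $t$ of additional uniform random edges at least $4$ have both endpoints in $V'$. Since $|V'| = \Omega(n)$, each fresh uniform random edge (chosen among the $\binom{n}{2}$ pairs, or among current non-edges) has both endpoints in $V'$ with probability $\Omega(1)$, independently; so for $t$ a large enough constant, with probability $1 - o(1)$ at least $4$ of them do. Adding these $t = \Theta(1)$ edges to $G(n,c/n)$ yields a graph whose distribution is contiguous with $G(n,c'/n)$ for a constant $c' < c_2$ (or one can simply observe that $G(n,c/n)$ plus $\Theta(1)$ uniform edges is stochastically dominated, up to $o(1)$ total variation, by $G(n,c''/n)$ with $c < c'' < c_2$), which by Proposition~\ref{prop:FR} is $2$-orientable w.h.p. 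This contradicts non-$2$-orientability, so the bad event has probability $o(1)$.

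The main obstacle is the conditioning: $G'$ is itself a random object, so I need the statement "a fresh uniform edge lands in $V'$ with probability $\Omega(1)$" to hold conditioned on the (complicated, low-probability-looking but actually $\Omega(1)$) event that a giant rigid component exists and conditioned on which vertex set $V'$ it occupies. The clean way around this is to not condition at all: define the bad event $B_n$ = "$G(n,c/n)$ has a rigid component on $\ge 4$ vertices", suppose $\limsup \Pr[B_n] = \delta > 0$, and work on $B_n$. On $B_n$, by Proposition~\ref{prop:LT}, there is a rigid component with $n' \ge \gamma n$ for a constant $\gamma > 0$ depending only on $c$. Now reveal $t$ extra uniform edges $e_1,\dots,e_t$ independently of $G(n,c/n)$; since $|V'| \ge \gamma n$, each $e_i \in \binom{V'}{2}$ with probability $\ge \gamma^2/2$ say, independently across $i$, regardless of what $V'$ is. Hence $\Pr[\text{at least }4\text{ of the }e_i\text{ lie in }V' \mid B_n] \ge 1 - o(1)$ for $t$ large (a Chernoff/binomial tail bound, uniform in $V'$), so $\Pr[G(n,c/n) + \{e_i\}\text{ not }2\text{-orientable}] \ge \delta - o(1)$ by Lemma~\ref{lem:adding-edges-to-laman-blocks}. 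But $G(n,c/n)$ plus $t$ uniform edges is within $o(1)$ total variation distance of $G(n,c/n)$ (and, more to the point, is $2$-orientable w.h.p.\ by monotonicity of non-$2$-orientability together with Proposition~\ref{prop:FR} applied at a parameter $c''$ with $c < c'' < c_2$, which stochastically dominates it for $n$ large). This forces $\delta = 0$, proving the lemma. I would take care that the "stochastic domination" step is stated correctly — adding $\Theta(1)$ edges to $G(n, c/n)$ is dominated by $G(n,c''/n)$ for any $c'' > c$ once $n$ is large — and that Proposition~\ref{prop:FR} is invoked at a constant strictly below $c_2$.
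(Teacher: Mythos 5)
Your argument is correct and is essentially the same as the paper's proof: both invoke \propref{LT} to upgrade a component on $\ge 4$ vertices to a giant one, add $\Theta(1)$ fresh random edges (the paper adds each non-edge independently with probability $1/n^2$, you add a fixed constant $t$ of them), show that conditioned on the bad event at least four land in the span of the giant component with probability $\Omega(1)$, apply \lemref{adding-edges-to-laman-blocks} to destroy $2$-orientability, and contradict \propref{FR} since the augmented graph is still (asymptotically) a $G(n,c'/n)$ with $c' < c_2$.
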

\begin{proof}
\propref{LT} implies that all rigid components in $G(n,c/n)$ have size at most three or are giant.
We will show that, w.h.p., there are no giant rigid components.
Let $\Gamma$ be the event that $G$ spans a rigid component $G'=(V',E')$ on $n'>3$
vertices and $m'$ edges.

Define the graph $H$ to be the one obtained by adding edges sampled
with probability $1/n^2$, independently, from the complement of $G$ in $K_n$.  Since
$H$ is a random graph with edge probability $(c+1/n)/n$, by \propref{FR} $H$ is, w.h.p.,
$2$-orientable so:
\[
\begin{split}
\Pr\left[\text{$H$ is not $2$-orientable}|\Gamma\right]\Pr\left[\Gamma\right] +  \\
\Pr\left[\text{$H$ is not $2$-orientable}|\bar\Gamma\right]\Pr\left[\bar\Gamma\right] = o(1)
\end{split}
\]
We will show that $\Pr\left[\text{$H$ is not $(2,0)$-sparse}|\Gamma\right]$ is uniformly bounded away from
zero, which then forces the probability of a rigid component on more than three vertices to be $o(1)$.

If $\Gamma$ holds, \propref{LT} implies that, $n'=\Omega(n)$, w.h.p.
It follows that, conditioned on $\Gamma$, each of the added edges is
in the span of $V'$ with probability $(1-o(1))\frac{1}{n^2}\Omega(n^2)=\Theta(1)$, so the
probability that at least four of them end up in the span of $V'$ is $\Theta(1)$ as well.
This shows that with probability $\Theta(1)$, the combinatorial lemma
\lemref{adding-edges-to-laman-blocks} applies and so
\[
\Pr\left[\text{$H$ is not $2$-orientable}|\Gamma\right] = \Theta(1).
\]
\end{proof}

\begin{lemma}\label{lem:thresh-gec2}
Let $c > c_2$.  Then w.h.p., $G(n,c/n)$ has at least one giant rigid component.
\end{lemma}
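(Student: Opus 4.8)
The plan is to read the conclusion off almost directly from the \emph{definition} of $c_2$ together with \propref{LT}. Recall that $c_2$ is defined as the supremum of those $c$ for which the $3$-core of $G(n,c/n)$ has average degree at most $4$; implicit in this is that, for each fixed $c$, the $3$-core has a well-defined limiting average degree $d(c)$ (in the w.h.p.\ sense), with the set $\{c' : d(c')\le 4\}$ having supremum $c_2$. Consequently, for any fixed $c>c_2$ one has $d(c)>4$, and since $d(c)$ is a constant this gap is bounded away from zero. Moreover $c_2\approx 3.588>\lambda_3\approx 3.351$, so for $c>c_2$ the $3$-core is, w.h.p., non-empty and of linear size by \propref{PSW}. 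Writing $n_3$ and $m_3$ for the number of vertices and edges of the $3$-core, the $k$-core concentration results of~\cite{PSW} (the vertex count being \propref{PSW}, with the edge count following from concentration of the $3$-core's degree sequence by the same methods) give $2m_3/n_3 = d(c)+o(1) > 4$ w.h.p., hence w.h.p.\ $m_3 \ge (2+\eps)n_3$ for some constant $\eps=\eps(c)>0$; in particular $m_3 > 2n_3-3$ (since $n_3=\Omega(n)\to\infty$).

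Next I would convert this edge surplus into a large rigid block. The $3$-core is a simple graph on $n_3$ vertices with more than $2n_3-3$ edges, so it is not Laman-sparse and the hypothesis of \lemref{circuits} is met: the $3$-core spans a rigid block on at least four vertices. Since rigidity is a monotone property under edge addition (\lemref{monotone}), the rigid component of $G(n,c/n)$ that contains this block also spans at least four vertices. (One could equally invoke \lemref{component-3plus2core} to obtain a four-vertex rigid component directly.)

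Finally, to upgrade ``at least four vertices'' to ``giant,'' apply \propref{LT}: w.h.p.\ every rigid component of $G(n,c/n)$ has size $1$, $2$, $3$, or $\Omega(n)$. A rigid component on at least four vertices is therefore of size $\Omega(n)$, i.e., giant, which is precisely the asserted conclusion. The only step with genuine content is the first one --- verifying that for $c>c_2$ the $3$-core really carries strictly more than $2n_3-3$ edges, with room to spare --- and this is essentially a quantitative restatement of the definition of $c_2$ backed by the $k$-core concentration of~\cite{PSW}; the remaining two steps are immediate combinatorial consequences of \lemref{circuits} and \propref{LT}.
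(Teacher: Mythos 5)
Your proposal is correct and follows essentially the same route as the paper's proof: use the definition of $c_2$ to get that for $c>c_2$ the $3$-core has (w.h.p.) strictly more than $2n_3-3$ edges, then apply \lemref{circuits} to obtain a rigid block on at least four vertices, and finish with \propref{LT} to upgrade to a giant component. The only cosmetic difference is that the paper shortcuts the first step by citing \propref{FR} directly, whereas you unpack the definition of $c_2$ and appeal to the concentration of the $3$-core's vertex and edge counts from~\cite{PSW}; both are legitimate ways to certify the edge surplus in the $3$-core, and the remainder of the argument is identical.
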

\begin{proof}
By \lemref{circuits}, any simple graph with $n$ vertices and at least $2n-2$ edges spans a
rigid block on at least $4$ vertices.  \propref{FR} implies that for $c > c_2$,
the $3$-core of $G(n,c/n)$ induces such a graph w.h.p.  Finally \propref{LT} implies that
there is a giant rigid component w.h.p.
\end{proof}

\paragraph{Uniqueness of the Giant Rigid Component.} Before we determine the
size, we show that w.h.p. there is only one giant rigid component and that it
is contained in the $(3+2)$-core.

\begin{lemma}\label{lem:giant-component-unique}
Let $c > c_2$.  Then w.h.p., there is a unique giant rigid component in $G(n,c/n)$.
\end{lemma}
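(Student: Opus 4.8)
The plan is to prove that, w.h.p., $G(n,c/n)$ has at most one rigid component on four or more vertices; since \lemref{thresh-gec2} already supplies at least one giant component, and \propref{LT} forbids any component of size between $4$ and $\delta_0 n$, this yields the unique giant rigid component. (I cannot reuse the perturbation idea of \lemref{thresh-lec2}: for $c>c_2$ the graph is already not $2$-orientable by \propref{FR}, so adding edges produces no contradiction.) I would combine a deterministic reduction with a first-moment estimate.

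First, the structural reduction. Suppose $V_1\ne V_2$ are rigid components on $\ge4$ vertices; by \propref{LT} both have size $\ge\delta_0 n$ (with $\delta_0>0$ the constant from that proposition), and by \lemref{component-decomp} they meet in at most one vertex. The key observation is that a rigid block absorbs any vertex with two neighbours in it — attaching such a vertex by two edges to a Laman basis keeps the graph Laman-spanning (an easy sparsity count) — so maximality of a component forces every vertex outside $V_i$ to have at most one neighbour in $V_i$. Hence the edges between $V_1$ and $V_2$ form a matching, and by \lemref{blocks-combine} (three crossing edges in the disjoint case, or one edge avoiding the common vertex in the shared case) they would combine into one block unless there are at most two such edges. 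Writing $A=V_1\setminus V_2$, $B=V_2\setminus V_1$, the bad event thus yields disjoint sets $A,B$ with $|A|,|B|\ge\delta_0 n-1$ satisfying (i) $G[A]$ spans $\ge 2|A|-O(\log n)$ edges and $G[B]$ spans $\ge 2|B|-O(\log n)$ edges (from Laman bases, dropping the $O(\log n)$ edges at a common vertex, using the w.h.p. bound on the maximum degree of $G(n,c/n)$); (ii) at most two edges run between $A$ and $B$; and (iii) every vertex of $V\setminus(A\cup B)$ has at most one neighbour in $A$ and at most one in $B$.

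Second, the first-moment bound. I would estimate the expected number of pairs $(A,B)$ of disjoint sets of sizes $\alpha n,\gamma n$ with $\alpha,\gamma\ge\delta_0$ obeying (i)--(iii). Choosing the ordered partition $(A,B,V\setminus(A\cup B))$ costs $\exp(nH(\alpha,\gamma,1-\alpha-\gamma)+o(n))$, with $H$ the (natural-log) multinomial entropy. The five conditions in (i)--(iii) are governed by five pairwise-disjoint sets of possible edges — within $A$, within $B$, between $A$ and $B$, from $A$ to the rest, and from $B$ to the rest — so the corresponding events are independent, with probabilities (up to $\exp(o(n))$) $\exp(ng(\alpha))$, $\exp(ng(\gamma))$, $\exp(-c\alpha\gamma n)$, $\exp((1-\alpha-\gamma)(\ln(1+c\alpha)-c\alpha)n)$, $\exp((1-\alpha-\gamma)(\ln(1+c\gamma)-c\gamma)n)$; here $g(x)\le0$ is the large-deviation rate for $\bin\!\big(\binom{xn}{2},c/n\big)$ to reach $2xn$, and the last two arise from $\ln\Pr[\bin(xn,c/n)\le1]=\ln(1+cx)-cx+o(1)$ raised to the power $(1-\alpha-\gamma)n$. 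The remaining task is to check that the sum of all these exponents stays below a negative constant on the compact region $\{\alpha,\gamma\ge\delta_0,\ \alpha+\gamma\le1\}$, which by continuity reduces to a pointwise inequality; then the expected number of bad pairs is $\exp(-\Omega(n))$, so w.h.p. none exists.

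The hard part is exactly that last exponent inequality. Conditions (i) and (ii) alone are insufficient — at balanced sizes, e.g.\ $\alpha=\gamma\approx0.3$, the exponent is positive, because $G(n,c/n)$ really does contain large dense subgraphs (the $3$-core among them) — and it is condition (iii), contributing the strictly negative terms $\ln(1+cx)-cx$, that pushes the total negative. One also has to keep $\alpha,\gamma$ bounded below by $\delta_0$: the estimate fails at arbitrarily small linear sizes, but \propref{LT} rules out components of those sizes, so nothing is lost. Everything else — the shared-vertex subcase, the logarithmic slack in (i), and verifying the exponent numerically — is routine bookkeeping.
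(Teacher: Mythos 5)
Your approach is genuinely different from the paper's, and while its outline is sound, the step you call ``routine bookkeeping'' is in fact the entire proof, and you have not carried it out.

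The paper avoids any first-moment computation. It realizes $G(n,c/n)$ by a random edge-ordering of $K_n$ truncated at a binomial time $m$, and tracks the evolution of rigid components along the process $G_1,G_2,\dots$. It uses \propref{LT} to argue that only $O(1)$ steps can create a new rigid component of size $>3$ (each is giant, so there is room for only $O(1)$ of them), and \lemref{blocks-combine} plus sprinkling to argue that two coexisting giant rigid components merge within $O(\log n)$ further steps (w.h.p.\ three of the next $O(\log n)$ random edges cross between their symmetric differences). Hence there are $O(\log n)$ ``bad'' steps at which two such components coexist, and by binomial concentration the random stopping time $m$ lands on any specific step with probability $O(1/\sqrt n)$; a union bound finishes. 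No large-deviation rate function ever appears.

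Your reduction to a static first-moment bound is correct as far as it goes: the observation that a rigid component absorbs any outside vertex with two neighbours in it is right (and is essentially the Henneberg~I move / \lemref{peel-degree-two} in reverse), so condition~(iii) is indeed forced, as is condition~(ii) via \lemref{blocks-combine}, and the five edge-classes are disjoint so the probabilities factor. But the proof then hinges entirely on showing that
\[
H(\alpha,\gamma,1-\alpha-\gamma) + g(\alpha)+g(\gamma) - c\alpha\gamma + (1-\alpha-\gamma)\big[(\ln(1+c\alpha)-c\alpha)+(\ln(1+c\gamma)-c\gamma)\big] < -\eps
\]
uniformly over the compact region $\{\alpha,\gamma\ge\delta_0,\ \alpha+\gamma\le 1\}$ and for \emph{all} $c>c_2$ (not just $c$ near $c_2$: as $c\to\infty$ the term $g$ tends to $0$ and the negativity must come entirely from the $-c\alpha\gamma$ and condition~(iii) terms, so the balance shifts). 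You yourself note that conditions (i)--(ii) alone give a \emph{positive} exponent at $\alpha=\gamma\approx 0.3$, so whether (iii) rescues it is exactly the content of the lemma; asserting it and deferring to ``numerics'' leaves the proof incomplete. A two-parameter optimization with a free parameter $c$ ranging over $(c_2,\infty)$, and where the sign genuinely changes if a term is dropped, is not routine bookkeeping; it is the crux, and if you take this route you must actually do it. (Spot checks at a few $(\alpha,\gamma,c)$ do come out negative, so I believe the approach can be made to work, but it has to be carried through.) You also implicitly need the constant $\delta_0$ from \propref{LT} to be \emph{explicit}, or else to verify the exponent remains bounded away from zero as $\alpha,\gamma\downarrow 0$ so that the lower cutoff is immaterial; you claim the estimate ``fails at arbitrarily small linear sizes'' but do not show this, and the behaviour near $\alpha\to 0$ (where $g(\alpha)\sim 2\alpha\ln\alpha$ competes with $-\alpha\ln\alpha$ from the entropy) is exactly the kind of delicate limit that needs to be written out. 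The paper's process argument sidesteps all of this at the cost of a small loss in the failure probability ($O(\log n/\sqrt n)$ instead of exponentially small), which is more than sufficient here.
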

\begin{proof}
By \lemref{thresh-gec2}, when $c > c_2$ w.h.p. there is at least one giant rigid component in $G(n,c/n)$.
To show the giant rigid component is unique, we consider $G(n,c/n)$ as being generated by the following
random graph process:
first select a linear ordering of the $\binom{n}{2}$ edges of $K_n$
uniformly at random and take the first $m$ edges from this ordering,
where $m$ has binomial distribution with parameters $\binom{n}{2}$ and $c/n$.

Consider the sequence of graphs $G_1, G_2,\ldots, G_{\binom{n}{2}}$
defined by adding the edges one at a time according to the selected ordering.
Define $t\in [1,\binom{n}{2}]$ to be \emph{critical} if
$G_t$ has one more rigid component spanning more than three vertices than $G_{t-1}$ and \emph{bad} if
$G_t$ has more than one such rigid component.

By \propref{LT}, w.h.p., all rigid components on more than three vertices that appear
during the process have size $\Omega(n)$.  Thus, w.h.p., at most $O(1)$ $t$ are critical.

To bound the number of bad $t$ we note that if $G'=(V',E')$ and $G''=(V'',E'')$ are distinct giant rigid components,
then the probability that a random edge has one endpoint in $V_1\setminus V_2$ and the other
in $V_2\setminus V_1$ is $\Theta(1)$, and the probability that two of the added edges are incident on the same vertex is $O(1/n)$.
So after the addition of $O(\log n)$ random
edges, at least three of them have this property, w.h.p.  \lemref{blocks-combine} then implies that, w.h.p.,
$G'$ and $G''$ persist as giant rigid components for at most $O(\log n)$ steps in the process.   Since there
are at most $O(1)$ such pairs, the total number of bad or critical $t$ is $O(\log n)$, w.h.p.

The probability that $m=t$ is $O(1/\sqrt{n})$,
by standard properties of the binomial distribution. A union bound shows that the probability $m$ is bad
is $O(\log n/\sqrt{n})$, so the probability there is only one rigid component on more than three
vertices and that it is giant is $1-o(1)$ as desired.
\end{proof}

We need two more structural lemmas about the relationship between the giant rigid component and the $(3+2)$-core.
\begin{lemma}\label{lem:in-3p2-core}
Let $c > c_2$.  Then, the unique giant rigid component that exists w.h.p. by \lemref{giant-component-unique}
is contained in the $(3+2)$-core of $G(n,c/n)$.  Moreover, the giant rigid component contains a
(smaller) unique giant rigid block that lies entirely in the $3$-core.
\end{lemma}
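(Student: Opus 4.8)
The plan is to prove \lemref{in-3p2-core} in two parts, corresponding to its two assertions: first that the giant rigid component lies in the $(3+2)$-core, and second that inside it there is a unique giant rigid \emph{block} lying in the $3$-core. For the first part, let $G'=(V',E')$ be the unique giant rigid component guaranteed by \lemref{giant-component-unique}. Since $G'$ is a rigid component on $n' = \Omega(n) \ge 4$ vertices, it is by definition vertex-induced and Laman-spanning, so by \lemref{laman-spanning-degrees} it has minimum degree at least two. I would then argue that $V'$ survives the peeling process defining the $(3+2)$-core in reverse: one checks that $G'$ cannot be destroyed by the standard characterization of the $(3+2)$-core as a fixed point. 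Concretely, run the $3$-core peeling (remove degree-$\le 2$ vertices) together with the $(3+2)$-augmentation; the point is that every vertex of $V'$ has at least two neighbors \emph{within} $V'$ (by minimum degree two applied to the induced Laman-spanning subgraph), so once any single vertex of $V'$ is certified to be in the $(3+2)$-core, all of $V'$ follows by induction along a connected ordering. To seed this, I would invoke \lemref{component-3plus2core}-style reasoning: since $c > c_2$, \propref{FR} and \lemref{circuits} give a rigid block on $\ge 4$ vertices with a nonempty $3$-core contained in $G$, and by \lemref{monotone} and uniqueness this block is inside $G'$; its $3$-core vertices are in the $(3+2)$-core, giving the seed.

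For the second ("moreover") part, the plan is to locate a giant rigid block inside $G'$ that lies in the $3$-core. By the argument just sketched, $G$ has a rigid block $B$ on $\ge 4$ vertices that is not Laman-sparse, with minimum degree $\ge 3$, by \lemref{circuits} applied to the $3$-core of $G(n,c/n)$ (which has $\ge 2n_3 - 2$ edges for $c>c_2$ by \propref{FR}). This block $B$ is contained in $G'$ by \lemref{monotone} and the uniqueness in \lemref{giant-component-unique}. The natural candidate for "the" giant rigid block in the $3$-core is the union of all rigid blocks of $G$ that are contained in the $3$-core; by \lemref{blocks-combine} (the intersection/edge-connection criterion) the maximal such block is well-defined and, since $B$ witnesses one of size $\Omega(n)$, it is giant. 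Uniqueness of this giant rigid block follows by the same counting-plus-\propref{LT} argument used for \lemref{giant-component-unique}, or more directly: two giant rigid blocks both inside the $3$-core would, by \lemref{blocks-combine} applied to the $\Omega(n)$ edges of the $3$-core running between them (present w.h.p.\ by a first-moment/concentration estimate), merge into a single block, contradicting maximality.

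The main obstacle I anticipate is making the "every vertex of $G'$ is in the $(3+2)$-core" step fully rigorous, since the $(3+2)$-core is defined by a forward augmentation process from the $3$-core rather than by a clean local degree condition. The cleanest way around this is to prove a small structural fact first: if $H$ is a vertex-induced Laman-spanning subgraph of $G$ on $\ge 4$ vertices whose vertex set meets the $3$-core, then all of $V(H)$ lies in the $(3+2)$-core. The proof would use that $H$ has minimum degree two (\lemref{laman-spanning-degrees}) and is connected, so one can order $V(H)$ so that each vertex after the first has $\ge 2$ earlier neighbors \emph{in $H$}; combined with the seed vertex in the $3$-core, the defining augmentation of the $(3+2)$-core absorbs all of $V(H)$. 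Applying this with $H = G'$ finishes the first claim, and applying the reverse peeling argument of \lemref{component-3plus2core} pins the interior block into the $3$-core, completing the proof.
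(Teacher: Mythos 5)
Your overall plan matches the paper's, which proves this lemma in a few lines by citing \lemref{component-3plus2core} (whose proof is precisely the seed-plus-reverse-peeling argument you describe) and then observing that the $3$-core's excess edges yield the giant block for the ``moreover'' clause. So there is no new route here; what you propose is essentially an inline re-derivation of \lemref{component-3plus2core}.

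There is, however, a genuine gap in the way you state the auxiliary ``structural fact,'' and it propagates into the ordering you invoke. The hypothesis ``$V(H)$ meets the $3$-core'' is too weak: $K_4$ minus an edge is a Laman graph on four vertices with minimum degree two whose own $3$-core is \emph{empty}, so if only one of its vertices lay in the $3$-core of $G$, the $(3+2)$-augmentation would stall immediately---no remaining vertex would have two neighbors in the current core. What is actually needed (and what your application in fact supplies, since the seed comes from \lemref{circuits}) is that $H$ contains a sub-block of minimum degree at least three, so that the \emph{whole} $3$-core of $H$ lies in the $3$-core of $G$ and serves as a multi-vertex seed. Relatedly, it is false that ``one can order $V(H)$ so that each vertex after the first has $\ge 2$ earlier neighbors in $H$''---$K_4$ itself fails this---because connectivity plus minimum degree two does not yield such an ordering; the orderable set is $V(H)$ minus its own $3$-core, and the ordering exists precisely because of the reverse peeling given by \lemref{peel-degree-two}, which you should cite explicitly. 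With these two corrections your argument coincides with the paper's proof of \lemref{component-3plus2core} specialized to $G'$, and the rest of your plan (locating the giant block in the $3$-core via \lemref{circuits} and arguing uniqueness via a \lemref{blocks-combine} merge) is sound and matches the paper.
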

\begin{proof}
By \lemref{giant-component-unique}, w.h.p., $G(n,c/n)$ has exactly one rigid component of size at least $4$, and
it spans at least twice as many edges as vertices.  Thus \lemref{component-3plus2core} applies to the rigid
component, w.h.p., so it is in the $(3+2)$-core.  Because the $3$-core itself has average degree at least $4$,
the second part of the lemma follows from the same argument.
\end{proof}

\begin{lemma}\label{lem:3p2-core}
Let $c>c_2$ and suppose that the giant rigid block in the $3$-core implied by \lemref{in-3p2-core}
spans all but $o(n)$ of the vertices in the $3$-core.  Then, w.h.p., the giant rigid component
spans all but $o(n)$ vertices in the $(3+2)$-core.
\end{lemma}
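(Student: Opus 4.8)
The plan is to leverage the inductive construction of the $(3+2)$-core as a sequence of Henneberg I moves applied to the $3$-core, and to argue that each such move preserves membership in a rigid block. Recall that the $(3+2)$-core is obtained from the $3$-core by repeatedly adding a vertex $v$ together with two edges to vertices already present. By \lemref{in-3p2-core}, w.h.p.\ there is a giant rigid block $B$ lying entirely in the $3$-core. First I would observe that each vertex added in the $(3+2)$-core construction attaches via (at least) two edges to the subgraph built so far; whenever \emph{both} of these attachment edges land on vertices of a rigid block $B'$ containing $B$, the Henneberg I move of \lemref{blocks-combine} (second bullet is not needed here; the disjoint-vertex case with the new vertex $v$ playing the role of a singleton block that receives two edges) shows that $V(B') \cup \{v\}$ is again a rigid block. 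Thus the set of vertices reachable from $B$ by such ``both-endpoints-inside'' attachments forms a single growing rigid block, and the giant rigid component contains all of them.

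The key quantitative step is then to bound the number of vertices in the $(3+2)$-core whose attachment edges do \emph{not} both fall inside the growing block. By hypothesis, the $3$-core has $s = \Theta(n)$ vertices and all but $o(n)$ of them lie in $B$; call the exceptional set $S_0$, with $|S_0| = o(n)$. Order the $(3+2)$-core vertices outside the $3$-core as $v_1, v_2, \dots$ in the order they are added, and let $B_i$ denote the growing rigid block after processing $v_1, \dots, v_i$ (so $B_0 = B$). A vertex $v_i$ fails to be absorbed into $B_i$ only if at least one of its two attachment edges points to a vertex not in $B_{i-1}$, i.e., to a vertex in $S_0$ or to a previously-unabsorbed $v_j$. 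Letting $S$ be the total set of never-absorbed vertices, every vertex of $S \setminus S_0$ must send an attachment edge into $S$ itself. Hence $S$ induces (in the $(3+2)$-core, using the attachment edges) a subgraph in which every vertex outside $S_0$ has degree at least one into $S$, which is a mild self-referential condition; I would turn this into the bound that the ``closure'' of $S_0$ under the relation ``has an attachment edge into the current bad set'' is itself $o(n)$.

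The natural way to make the last point rigorous is a branching / exploration argument in $G(n,c/n)$: starting from the seed set $S_0$ of size $o(n)$, the set of $(3+2)$-core vertices that can be reached by a chain of attachment edges terminating in $S_0$ is dominated by a subcritical-type branching process (each vertex outside the $3$-core has Poisson-bounded degree, and the probability that a given new vertex attaches into a fixed $o(n)$-sized set of already-placed vertices is $o(1)$), so this closure has size $|S_0| \cdot (1 + o(1)) = o(n)$ w.h.p. This is the step I expect to be the main obstacle, because one must be careful that the $(3+2)$-core peeling order and the random-graph edges are not independent in a naive way — the vertices of the $(3+2)$-core outside the $3$-core are precisely those selected by the inductive rule, so the conditioning has to be handled by revealing edges in the order of the construction (or by a union bound over candidate bad sets of size $o(n)$, using that a set $T$ with $|T| = \omega(|S_0|)$ in which every vertex has an edge into $T \cup S_0$ is an unlikely dense-ish structure, exactly the kind ruled out by the sparse-subgraph counting underlying \propref{LT} and \propref{PSW}). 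Once the closure bound is in hand, $|S| = o(n)$, so the growing rigid block — hence the giant rigid component — spans all but $o(n)$ of the $(3+2)$-core, as claimed.
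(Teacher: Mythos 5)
Your high-level plan is sound and matches the paper's in spirit: the $(3+2)$-core is built from the $3$-core by attaching degree-two vertices, and absorbing such a vertex into a Laman-spanning block when both attachment edges land inside the block is exactly the converse of \lemref{peel-degree-two} (note that \lemref{blocks-combine} is not the right citation here, since its disjoint case needs \emph{three} connecting edges; the singleton-plus-two-edges case is a Henneberg I extension, justified by the converse of \lemref{peel-degree-two}). The quantitative step you identify as the main obstacle is indeed where the approaches diverge, and it is also where your proposal has a real gap.

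You explore \emph{forward} from the seed set $S_0 = V_0 \setminus V'$ and assert that the closure is dominated by a ``subcritical-type branching process'' with size $|S_0|\cdot(1+o(1))$. That multiplicative form would require the per-step infection rate to be $o(1)$, but the natural forward branching factor is not $o(1)$: a bad vertex $u$ can newly infect every later-added $(3+2)$-core neighbor $v$ for which $uv$ is one of $v$'s attachment edges, and the expected number of such $v$ is $\Theta(1)$ (governed by $\deg(u)$, which is asymptotically Poisson with mean $> c_2$). So without a further argument the forward process is not obviously subcritical at all, let alone with ratio $o(1)$, and the sequential self-referential estimate (``if the bad set is $o(n)$ so far, each new vertex is bad w.p.\ $o(1)$'') is circular unless you supply a concentration or stopping-time argument that rules out the bad set ever escaping past $\eps n$. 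You flag the conditioning difficulty, but the proposed rescue via \propref{LT}/\propref{PSW} does not speak to the supercriticality concern.

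The paper instead argues \emph{backward} and per vertex: each $v\in V_1\setminus V_0$ sits atop a binary attachment tree whose leaves lie in $V_0$; a branching process bound gives height $O(\log\log n)$ w.h.p., hence $O(\log n)$ leaves; since $|V_0\setminus V'| = o(n)$ (in fact $O(\log^3 n\,\sqrt{n})$ from \theoref{almost}), a union bound over these $O(\log n)$ leaves shows each $v$ is absorbed with probability $1-o(1)$, and linearity of expectation plus Markov finishes. Going backward fixes the branching factor at exactly $2$ and replaces the self-referential ``bad set stays small'' claim with a clean per-vertex union bound, which is what makes the argument close without a martingale or exploration-process estimate. If you want to retain your forward framing, you would need to show the forward infection is genuinely subcritical (or restructure it as the complementary per-vertex bound), which the paper's tree picture does for free.
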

\begin{proof}
Since $c>c_2$, both the $3$-core and the $(3+2)$-core span $\Omega(n)$ vertices, w.h.p.  If the $(3+2)$-core has $o(n)$
more vertices than the $3$-core, then we are already done.  Thus for the rest of the proof, we assume that the
$(3+2)$-core spans $\Omega(n)$ more vertices than the $3$-core.

Let $G$ denote $G(n,c/n)$, let $G_0=(V_0,E_0)$ denote the $3$-core, let $G_1=(V_1,E_1)$ be the $(3+2)$-core,
and let $G'=(V',E')$ be the giant rigid block in the $3$-core.  We now observe that each $v\in V_1-V_0$
sits at the ``top'' of a binary tree with its ``leaves'' in $V_0$.  A branching process argument shows that each
of these has height at most $\log\log n$, w.h.p.  On the other hand, a vertex $v\in V_0$ is in the giant
rigid component when all of these $O(\log n)$ ``leaves'' lie in $V'$.  Since $V_0-V'$ has $o(n)$ vertices,
this happens with probability $1-o(1)$.
\end{proof}

\paragraph{Size of the giant rigid component.}  We are now ready to bound the size of the giant rigid component
when it emerges.  Here is the main lemma.
\begin{lemma}\label{lem:giant-component-size}
Let $c>c_2$.  Then, w.h.p., the unique giant rigid component implied by \lemref{giant-component-unique}
spans a $(1-o(1))$-fraction of the vertices in the $(3+2)$-core.
\end{lemma}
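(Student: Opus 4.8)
The plan is to combine \theoref{almost} with the combinatorial \lemref{mainlemma} and the dichotomy of \propref{LT}, via a sprinkling argument. Let $G$ denote $G(n,c/n)$ with $c>c_2$, and let $G_1=(V_1,E_1)$ be its $(3+2)$-core, which is giant w.h.p. by \propref{PSW} and the discussion following it. By \lemref{giant-component-unique} and \lemref{in-3p2-core} there is, w.h.p., a unique giant rigid component $G'=(V',E')$ and it is contained in $G_1$; by \lemref{3p2-core} it suffices to show that the giant rigid block inside the $3$-core spans all but $o(n)$ vertices of the $3$-core, and in fact it is cleaner to argue directly at the level of $G_1$. Suppose for contradiction that $V''=V_1\setminus V'$ has $\Omega(n)$ vertices. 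The key tension is that \theoref{almost} produces an orientation of a subgraph of $G_1$ giving all but $O(\sqrt{n}\log^3 n)$ vertices of $G_1$ out-degree exactly $2$, which forces $G_1$ to be ``nearly $(2,0)$-tight'' on $V''$; in particular the subgraph of $G_1$ induced by $V''$ is incident on close to $2|V''|$ edges. I would then sprinkle $O(1)$ additional random edges and apply \lemref{mainlemma}.

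First I would make the edge-count on $V''$ precise. From \theoref{almost}, there is a subgraph $H_0$ of $G_1$ and an orientation of $H_0$ in which every vertex of $V_1$ except an exceptional set $S$ with $|S|=O(\sqrt{n}\log^3 n)$ has out-degree exactly $2$. Counting the edges of $H_0$ with at least one endpoint in $V''\setminus S$ by their tails, the vertices of $V''\setminus S$ alone contribute $2(|V''|-|S|)$ edge-tails, so $G_1$ contains at least $2|V''|-O(\sqrt n\log^3 n)$ edges incident on $V''$. Since I want exactly ``incident on at least $2n''$ edges'' to feed \lemref{mainlemma}, I add a few more edges by sprinkling: let $H$ be obtained from $G$ by adding each non-edge independently with probability $1/n^2$, so $H\sim G(n,(c+1/n)/n)$ up to the usual coupling and the $(3+2)$-core of $H$ contains $V_1$. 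Because $|V''|=\Omega(n)$, with probability $\Theta(1)$ at least $O(\sqrt n\log^3 n)+4$ of the sprinkled edges land with both endpoints in $V''$ — here I use that the number of sprinkled edges inside $V''$ is Binomial with mean $\Theta(1)\cdot \binom{|V''|}{2}/n^2=\Theta(1)$, wait, that is only $\Theta(1)$, not $\omega(\sqrt n)$; so I instead sprinkle with probability $\log^4 n/n^2$, keeping the edge probability $c/n+o(1/n)$ still below no useful threshold — actually the cleanest fix is to observe \lemref{mainlemma} only needs $V''$ incident on $2n''$ edges, and the deficit is $O(\sqrt n\log^3 n)=o(n)$, so I reduce $V''$: remove $o(n)$ vertices of $V'' $ to absorb the deficit and argue on $V_1$ minus those, which still has $\Omega(n)$ vertices and is now incident on at least twice its size in edges after adding $4$ sprinkled edges landing inside it (an event of probability $\Theta(1)$ by the same Binomial-mean-$\Theta(1)$ computation, since we only need $4$ of them).

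With that in hand, apply \lemref{mainlemma} to the graph $H$ restricted to its $(3+2)$-core (which coincides with its own $(3+2)$-core by construction), with the rigid component $G'$ — still a rigid component of $H$ by monotonicity, \lemref{monotone} — playing the role of $G'$ and the reduced $V''$ playing the role of the complementary vertex set incident on $\ge 2n''$ edges. The lemma's conclusion is that either $H$'s $(3+2)$-core is Laman-spanning, or it has a second rigid component on $\ge 4$ vertices. But \propref{LT} applied to $H$ says all rigid components are tiny or giant, so a second one on $\ge 4$ vertices is giant, contradicting \lemref{giant-component-unique} applied to $H$ (which is also a sparse random graph with $c+o(1)>c_2$). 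Hence the first alternative holds: $H$'s $(3+2)$-core is Laman-spanning, so $G'=V'$, contradicting $|V''|=\Omega(n)$. Conditioning: the chain ``$|V''|=\Omega(n)$'' $\Rightarrow$ ``with probability $\Theta(1)$ over the sprinkling, $H$ has two giant rigid components or a Laman-spanning $(3+2)$-core, the former contradicting a w.h.p. event and the latter contradicting the starting assumption'' forces $\Pr[|V''|=\Omega(n)]=o(1)$, which together with \lemref{3p2-core} gives the claim.

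The main obstacle I expect is the bookkeeping in the sprinkling step: \theoref{almost}'s exceptional set has size $\Theta(\sqrt n\log^3 n)=\omega(1)$, so one cannot repair it with $O(1)$ sprinkled edges, and repairing it with $\omega(\sqrt n)$ sprinkled edges risks changing the core structure; the right move — absorbing the $o(n)$ deficit into $V'$ rather than into added edges, so that only a constant number of sprinkled edges is needed to push the count on the reduced $V''$ from $2n''-o(n)$ strictly above $2n''$ — needs to be set up carefully so that the reduced $V''$ is still $\Omega(n)$ and so that the conditioning on the (w.h.p.) events of \lemref{giant-component-unique} and \propref{LT} for $H$ is valid simultaneously with the $\Theta(1)$-probability sprinkling event. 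Everything else is a direct chaining of the already-proved lemmas.
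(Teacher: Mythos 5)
Your overall plan matches the paper's: apply \theoref{almost} to get an edge count on $V''$, sprinkle extra random edges to bring that count up to $2n''$, then feed \lemref{mainlemma} and rule out the second conclusion via \propref{LT} and \lemref{giant-component-unique}. But you get stuck on, and never resolve, the one step where the argument actually requires a precise choice: how many edges to sprinkle. Your first two attempts (probability $1/n^2$ or $\log^4 n/n^2$) give only $\Theta(1)$ or $\Theta(\log^4 n)$ new edges, which, as you note, cannot close a deficit of order $\sqrt{n}\log^3 n$. Your third attempt --- ``remove $o(n)$ vertices of $V''$ to absorb the deficit and argue on $V_1$ minus those'' --- does not work as stated. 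If you literally delete $\Theta(\sqrt{n}\log^3 n)$ vertices from the graph, the remainder need not coincide with its own $(3+2)$-core (deleted vertices create degree-$0$ and degree-$1$ stubs, and peeling them can cascade), so the hypotheses of \lemref{mainlemma} fail. If instead you keep the vertices but move them out of $V''$, then $V''\ne V\setminus V'$ and again the lemma does not apply: $V'$ is a \emph{rigid component}, not an arbitrary complement, and you cannot enlarge it at will. Either way, the bookkeeping you flag as ``needs to be set up carefully'' is a genuine unresolved gap, not a routine detail.

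The paper's resolution is exactly the thing you dismiss as risky: sprinkle at the rate $1/n^{5/4}$ extra in the edge probability, so that $H\sim G(n,(c+1/n^{5/4})/n)$ and $\Theta(n^{3/4})$ new edges appear. Conditioned on the bad event that $V''$ spans a constant fraction of the $3$-core, $\Theta(n^{3/4})$ of these land incident on $V''$, which dominates the $O(\sqrt{n}\log^3 n)$ deficit, so \lemref{mainlemma} applies w.h.p. The worry that so many edges ``risks changing the core structure'' is exactly what the paper addresses in its last paragraph: by the $k$-core continuity result of Pittel--Spencer--Wormald, adding $o(n)$ edges only grows the $3$-core by $o(n)$ vertices, so the $3$-core of $G$ still covers all but $o(n)$ of the $3$-core of $H$, and \lemref{3p2-core} then transfers the conclusion back. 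Also note the paper runs the argument on the $3$-core $V_0$ (which is trivially its own $(3+2)$-core) rather than directly on $V_1$, and then lifts to the $(3+2)$-core via \lemref{3p2-core}; arguing directly on $V_1$ as you propose is workable but you then have to separately verify the $(3+2)$-core hypothesis for $H$, which is not immediate. The right lesson is that there is a sweet spot --- any sprinkling rate $\omega(\sqrt{n}\log^3 n)$ but $o(n)$ works --- and identifying it is the content of the proof, not an afterthought.
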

\begin{proof}
Let $G=G(n,c/n)$, let $G_{0}=(V_0,E_0)$ be the $3$-core of $G$, and let $G'=(V',E')$
be a giant rigid block of $G_0$ implied, w.h.p. by \lemref{in-3p2-core}.
Let $n_0$ be the size of $V_0$ and $n'$ the size of $V'$. With high probability, $n_0$ and $n'$
are $\Omega(n)$.

By \theoref{almost} (and this is the hard technical step),
$V''=V_0\setminus V'$ is incident on $2(n_0-n')-O(\log^3 n\, \sqrt{n})$ edges in $G_0$.

Define $H$ to be the graph obtained by adding each edge of $K_n \setminus G$
to $G$ with probability such that $H$ and $G(n,(c+1/n^{5/4})/n)$ are
asymptotically equivalent.

Let $\gamma>0$ be a fixed constant and define $\Gamma$ to be the event that $n_0-n'\ge \gamma n_0$;
i.e., the giant rigid component spans at most a $(1-\gamma)$-fraction of the $(3+2)$-core in $G$.  Conditioned
on $\Gamma$, the expected number of edges added between $V'$ and $V''$ is $\Theta(n^{3/4})$,
so w.h.p., \lemref{mainlemma} applies to $V_0$ in $H$.  Recall that \lemref{mainlemma} has two conclusions:
\begin{itemize}
\item $V_0$ induces a Laman-spanning subgraph
\item $V_0$ spans multiple components spanning at least four vertices in $H$
\end{itemize}
The second case happens with probability $o(1)$ by \propref{LT}, so w.h.p., $V_0$ is Laman-spanning in $H$.

To complete the proof, we note that by \cite{PSW} adding $o(n)$ edges causes the $3$-core to grow by $o(n)$
vertices, w.h.p.  Thus the $3$-core of $G$ spans all but $o(n)$ vertices in the $3$-core of $H$.  Now \lemref{3p2-core}
applies to $H$, showing that, w.h.p., all but $o(n)$ vertices in the $(3+2)$-core lie in the giant rigid component.
\end{proof}

\paragraph{Proof of \theoref{main}.}
The theorem follows from \lemref{thresh-lec2}, \lemref{thresh-gec2}, and \lemref{giant-component-size}.
\hfill $\qed$

\section{Configurations and the algorithmic approach} \label{sec:conf}

We now develop the setting used in the proof of \theoref{almost}, which is based on our analysis of a $2$-orientation
heuristic for random graphs, which is introduced in the next section.  The heuristic operates in the
random configuration model (introduced in~\cite{BC78,B80}), which we briefly introduce here.

\paragraph{Random Configurations.}
A \emph{random configuration} is a model for random graphs with a pre-specified degree sequence; the
given data is a vertex set $V$ and a list of degrees for each vertex such that the sum of the degrees is even.
Define $deg(v)$ to denote the degree of a vertex $v\in V$.

A random configuration is formed from a set $\mathcal{A}$ consisting of $deg(v)$ {\em copies} of each vertex $v \in V$,
defined to be the set of (vertex) copies of $V$. Let $\mathcal{M}$ denote a uniformly random perfect matching in
$\mathcal{A}$. The multigraph $G_\mathcal{A}=(V,E)$ defined by $\mathcal{A}$ and $\mathcal{M}$ has $V$ as its vertex set and
and edge $vw$ for each copy of a vertex $v$ matched to a copy of a vertex $w$.

The two key facts about random configurations that we use here are:
\begin{itemize}
\item Any property that is holds w.h.p. in $G_{\mathcal{A}}$ holds w.h.p. when
conditioned on $G_{\mathcal{A}}$ being 	simple \cite{MR95,MR98}.
\item Any property that holds w.h.p. for $G_{\mathcal{A}}$ in a random configuration with
asymptotically Poisson	degrees holds w.h.p. in the sparse $G(n,c/n)$ model \cite{B01}.
\end{itemize}
Since we are only interested in proving results on $G(n,c/n)$, from now all random configurations discussed have
asymptotically Poisson degree sequences.

\paragraph{The Algorithmic Method.}  Our proof of \theoref{almost} relies on the following observation: a property that
holds w.h.p. for \emph{any} algorithm that generates a uniform matching $\mathcal{M}$ holds w.h.p. for a random configuration.  The
following two moves were defined by Fernholz and Ramachandran \cite{FR07}.
\begin{description}
\item[\textbf{FR I}] Let $\mathcal{A}$ be a set of vertex copies.  Select (arbitrarily) any copy $a_0$ and match it to a copy
$a_1$, selected uniformly at random.  The matching $\mathcal{M}$ is given by the matched pair
$\{a_0,a_1\}$ and a uniform matching on $\mathcal{A}\setminus \{ a_0,a_1\}$.
\item[\textbf{FR II}] Select two copies $a_0$ and $a_1$.  Let $\mathcal{M}'$ be a uniform matching on
$\mathcal{A}\setminus \{ a_0,a_1\}$.  Produce the matching $\mathcal{M}$ as follows:
\begin{itemize}
\item with probability $1/(|\mathcal{A}|-1)$ add the matched pair $\{a_0,a_1 \}$ to $\mathcal{M}'$
\item otherwise, select a matched pair $\{b_0,b_1\}$ uniformly at random and replace it in
$\mathcal{M}'$ with the pairs $\{ a_0,b_0 \}$, $\{ a_1,b_1 \}$.
\end{itemize}
\end{description}
These two moves generate uniform matchings.
\begin{lemma}[{\cite[Lemma 3.1]{FR07}}]\label{lem:matching-moves}
Matchings generated by recursive application of the moves FR I and FR II generate uniform random matchings on
the set of vertex copies $\mathcal{A}$.
\end{lemma}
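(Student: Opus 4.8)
The plan is to show that a \emph{single} application of either move turns a uniformly random perfect matching on the smaller copy-set into a uniformly random perfect matching on the full copy-set, and then to conclude by induction on $|\mathcal{A}|$, which every move decreases by exactly two. I will use throughout that a uniform perfect matching on a set of even size $s$ gives probability $1/(s-1)!!$ to each of its $(s-1)!!$ perfect matchings, and that the base cases $|\mathcal{A}|\in\{0,2\}$ are trivial. Because each move will be shown to preserve uniformity on its own, it does not matter which move is applied at a given step, nor how the distinguished copies are chosen, nor that the choices could be made adaptively.

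First I would handle \textbf{FR I}, which is routine: fix a target perfect matching $M$ of $\mathcal{A}$ and let $\{a_0,b\}$ be the unique edge of $M$ at the distinguished copy $a_0$. FR I outputs $M$ exactly when the uniform partner $a_1$ equals $b$ (probability $1/(|\mathcal{A}|-1)$) and the recursively generated matching on $\mathcal{A}\setminus\{a_0,a_1\}$ equals $M\setminus\{\{a_0,b\}\}$ (probability $1/(|\mathcal{A}|-3)!!$ by the inductive hypothesis); the product is $1/(|\mathcal{A}|-1)!!$, independent of $M$.

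The substantive step is \textbf{FR II}. Writing $N=|\mathcal{A}|$ and fixing a target $M$, I would split on whether $\{a_0,a_1\}\in M$. If $\{a_0,a_1\}\in M$, the move reaches $M$ only via the first branch, which needs $\mathcal{M}'=M\setminus\{\{a_0,a_1\}\}$ (probability $1/(N-3)!!$) and the coin landing on the first branch (probability $1/(N-1)$), giving $1/(N-1)!!$. If $\{a_0,a_1\}\notin M$, let $b_0,b_1$ be the $M$-partners of $a_0,a_1$ (distinct, and both in $\mathcal{A}\setminus\{a_0,a_1\}$, since $M$ is a matching), so that $\mathcal{M}'_M := (M\setminus\{\{a_0,b_0\},\{a_1,b_1\}\})\cup\{\{b_0,b_1\}\}$ is a perfect matching of $\mathcal{A}\setminus\{a_0,a_1\}$. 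Then the move reaches $M$ only via the second branch, and only when $\mathcal{M}'=\mathcal{M}'_M$ (probability $1/(N-3)!!$), the coin avoids the first branch (probability $(N-2)/(N-1)$), and the split of the chosen matched pair assigns $a_0\leftrightarrow b_0$ and $a_1\leftrightarrow b_1$ (probability $1/(N-2)$: one of the $(N-2)/2$ matched pairs, together with the orientation choice); the product is again $1/(N-1)!!$ after using $(N-1)!!=(N-1)(N-3)!!$.

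The main obstacle, and the only place real care is needed, is turning this case analysis into an exact accounting rather than an inequality. I must check (i) that the two branches are disjoint as producers --- the first always outputs a matching containing $\{a_0,a_1\}$, the second never does --- so nothing is double counted across branches; and (ii) that within the second branch the map from internal randomness to output is injective, the preimage of $M$ being recovered uniquely as $\big(\mathcal{M}'_M,\{b_0,b_1\},\text{orientation }a_0\leftrightarrow b_0\big)$. Granting (i)--(ii), each single FR move sends a uniform matching on the $(N-2)$-set to a uniform matching on the $N$-set, and since every move strictly shrinks $|\mathcal{A}|$ the recursion bottoms out; the induction then yields uniformity of the matching produced by any recursive application of FR I and FR II.
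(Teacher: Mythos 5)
The paper gives no proof of this lemma --- it is stated as an import from Fernholz and Ramachandran and the paper simply cites \cite[Lemma~3.1]{FR07} --- so there is nothing in the paper to compare your argument against line by line. Your self-contained induction is correct. Writing $N=|\mathcal{A}|$: for FR~I you correctly multiply $\frac{1}{N-1}\cdot\frac{1}{(N-3)!!}=\frac{1}{(N-1)!!}$; for FR~II your two cases are indeed disjoint (the first branch always outputs a matching containing $\{a_0,a_1\}$, the second never does), the preimage of any target $M$ in the second branch is the unique triple $\bigl(\mathcal{M}'_M,\{b_0,b_1\},\text{orientation}\bigr)$, and the arithmetic $\frac{N-2}{N-1}\cdot\frac{1}{(N-3)!!}\cdot\frac{1}{N-2}=\frac{1}{(N-1)!!}$ is right. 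One point worth making explicit, which you handled correctly but somewhat in passing: the phrase ``select a matched pair $\{b_0,b_1\}$ uniformly at random'' in the description of FR~II must be read as selecting one of the $N-2$ \emph{ordered} pairs (equivalently, one of the $(N-2)/2$ unordered pairs and then a uniform orientation). This reading is forced --- with any deterministic labeling of the chosen pair, half the target matchings in the second case would be unreachable from a given $\mathcal{M}'$ and the output distribution would not be uniform --- and your count $1/(N-2)$ is exactly this interpretation. Your final remark that the induction tolerates adaptive, history-dependent choices of the distinguished copies is also correct, since the inductive hypothesis is quantified over all strategies on the smaller copy set and, for FR~II, the surgery's own randomness (the coin and the ordered pair) is fresh and independent of $\mathcal{M}'$.
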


We will only use the move FRII in the special situation in which $a_0$ and $a_1$ are copies of the same vertex.  With
this specialization, in terms of the graph $G_{\mathcal{A}}$, the two moves correspond to:
\begin{description}
\item[\textbf{FR I}] Reveal an edge of $G_{\mathcal{A}}$ incident on the vertex $v$ that $a_0$ is a copy of.
\item[\textbf{FR II}] Pick two copies $a_0$ and $a_1$ of a vertex $v$.	Generate
$G_{\mathcal{A}-\{a_0, a_1\}}$ and then complete generating $G_{\mathcal{A}}$ by either adding a self-loop to
$v$ or splitting the edge $ij$ corresponding to $\{b_0,b_1\}$ by replacing it with edges $iv$ and $vj$, with probabilities
as described above.
\end{description}

\section{The $2$-orientation algorithm} \label{sec:algo}

We are now ready to describe our $2$-orientation algorithm.  It generates the random multigraph $G_{\mathcal{A}}$
using the moves FR I and FR II in a particular order and orients the edges of $G_{\mathcal{A}}$ as they are
generated.

Since \theoref{almost} is only interested in the $(3+2)$-core, we assume that our algorithm only
runs on the $(3+2)$-core of $G_{\mathcal{A}}$.  Since we can always orient degree two vertices
so that both incident edges point out, the only difficult part of the analysis is the behavior
of our algorithm on the $3$-core of $G_{\mathcal{A}}$.  We denote the set of copies corresponding to the
$3$-core by $\mathcal{A}_s$ and the corresponding multigraph by $G_{\mathcal{A}_s}$.  Define
the number of copies of a vertex $v$ in $\mathcal{A}_s$ by $deg_{\mathcal{A}_s}(v)$.

\paragraph{The $2$-orientation Algorithm.}  We now define our orientation algorithm in terms of the
FR moves.

\alg{$2$-orienting the $3$-core}{
Until $\mathcal{A}_s$ is empty, select a minimum degree vertex $v$ in $\mathcal{A}_s$, and let
$d=deg_{\mathcal{A}_s}(v)$.
\begin{enumerate}
\item If $d\le 2$, execute the FR I move, selecting a copy of $v$ deterministically.  Orient
the resulting edge or self-loop away from $v$.  If $v$ still has any copies left,
call this algorithm recursively, choosing $v$ as the minimum degree vertex.
\item If $d=3$, execute the FR II move, selecting two copies of $v$, and then recursively call this algorithm,
starting the recursive call on $v$ using case $1$.  If the FR II move generated a self-loop, orient it arbitrarily.
If the FR II move split an oriented edge $i\to j$, orient the new edges $i\to v$ and $v\to j$.
\item If $d\ge 4$, perform FR I move on $v$, leaving the resulting edge unoriented.  Then recursively call this algorithm,
choosing $v$ as a minimum degree vertex.
\end{enumerate}
}

We define a vertex $v$ to be \emph{processed} if it is selected deterministically at any time.  A vertex is
defined to be \emph{tight} if the algorithm orients exactly two edges out of it and \emph{loose} otherwise.
For convenience, when a vertex runs out of copies, we simply remove it from $\mathcal{A}_s$; thus
when we speak of the number of remaining vertices, we mean the number of vertices with any
copies left on them.

\paragraph{Correctness and Structural Properties.} We now check that the $2$-orientation algorithm is well-defined.

\begin{lemma}\label{lem:algorithm-correctness}
The $2$-orientation algorithm generates a uniform matching.
\end{lemma}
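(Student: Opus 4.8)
The plan is to show that the matching $\mathcal{M}$ produced by the $2$-orientation algorithm is uniform by appealing to \lemref{matching-moves}, which says that any recursive composition of the moves FR I and FR II yields a uniform matching on the copy set $\mathcal{A}$. So the work is entirely bookkeeping: verify that every step of the algorithm is an instance of a legal FR move applied to the current (residual) copy set, and that the recursion terminates having matched all copies. First I would observe that each of the three cases in the main loop invokes exactly one FR move on the copy set $\mathcal{A}_s$ currently remaining: case $1$ (and case $3$) is an FR I move with $a_0$ a deterministically chosen copy of $v$; case $2$ is an FR II move with $a_0,a_1$ two copies of $v$, which by \lemref{matching-moves} is also legal regardless of how $a_0,a_1$ are chosen. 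The orientation decisions attached to each move do not touch $\mathcal{M}$ at all, so they are irrelevant to this lemma.

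The one subtlety is that the algorithm is phrased recursively: after an FR I or FR II move, it calls itself ``on $v$'' rather than returning to a fresh minimum-degree selection. I would handle this by noting that \lemref{matching-moves} permits the \emph{choice} of which copy (or copies) to expose at each step to be arbitrary — in particular it may depend on the history — so reusing $v$ as the next pivot is allowed. Formally, I would let the algorithm define a sequence of copy sets $\mathcal{A}_s = \mathcal{A}^{(0)} \supsetneq \mathcal{A}^{(1)} \supsetneq \cdots \supsetneq \mathcal{A}^{(t)} = \emptyset$, where $\mathcal{A}^{(i+1)}$ is obtained from $\mathcal{A}^{(i)}$ by deleting the two (FR I) or the several (FR II: two exposed copies $a_0,a_1$ plus, in the non-self-loop case, the pivoted pair $b_0,b_1$ which are rematched) copies touched at step $i$. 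In every case at least two copies are removed and $\mathcal{M}$ restricted to the removed copies is exactly the matched pairs the move commits to, while $\mathcal{M}$ on $\mathcal{A}^{(i+1)}$ is, inductively, a uniform matching there. This is precisely the inductive structure underlying \lemref{matching-moves}, so the composition is a uniform matching on $\mathcal{A}_s$.

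Two small well-definedness points remain to be checked. The algorithm selects a \emph{minimum degree} vertex $v$ and then may recurse on $v$ while $v$ still has copies; I would note that $v$'s residual degree only decreases, so it stays a legal choice of exposed copy, and when $v$ runs out of copies we simply drop it, reducing to a smaller instance of the same process on $\mathcal{A}^{(i+1)}$ (possibly now empty, terminating the recursion). Also, in case $2$ with $d=3$, the FR II move leaves $v$ with one copy; the ``recursive call on $v$ using case $1$'' then performs an FR I on that last copy, which is legal since $d=1\le 2$. Thus every move invoked is a valid FR move on the current copy set, the process exhausts $\mathcal{A}_s$, and \lemref{matching-moves} gives that $\mathcal{M}$ is uniform.

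I do not anticipate a real obstacle here — the content of the lemma is that the algorithm is a \emph{faithful scheduler} of FR moves — but the step requiring the most care is confirming that the FR II invocation in case $2$, with both exposed copies belonging to the same vertex $v$, is covered by \lemref{matching-moves}; this is exactly the specialization the paper has already singled out in \secref{conf}, so it goes through.
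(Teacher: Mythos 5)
Your proof is correct and takes essentially the same route as the paper: reduce to \lemref{matching-moves} by checking that every step of the algorithm is a faithful FR move on the residual copy set, and handle the recursion by noting that the exposed copies may be chosen adaptively. The paper's own proof is terser but hinges on the same two observations — that the algorithm is built from FR moves, and that re-processing $v$ is consistent because its degree decreases at least as fast as any other vertex's (so it remains minimum-degree); you cover the latter point with your ``residual degree only decreases'' remark.
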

\begin{proof}
This follows from the fact that it is based on the FR moves and \lemref{matching-moves}.  The only other thing
to check is that if $v$ is being processed and the algorithm is called on $v$ again that $v$ is still a minimum degree
vertex in $\mathcal{A}_s$.  This is true, since $v$ started as minimum-degree and the FR moves decrease its degree at least
as much as any other vertex.
\end{proof}

The following structural property allows us to focus only on the evolution of the degree sequence.
\begin{lemma}\label{lem:loose-vertices}
A loose vertex is $v$ generated only in one of three ways:
\begin{itemize}
\item[\textbf{L1}] $v$ is never processed, because it runs out of copies before it is selected
\item[\textbf{L2}] $v$ has degree one in $\mathcal{A}_s$ when it is processed
\item[\textbf{L3}] $v$ has degree two in $\mathcal{A}_s$ when it is processed, and a self-loop is revealed
\end{itemize}
\end{lemma}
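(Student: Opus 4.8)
The plan is a finite case analysis on the behaviour of the $2$-orientation algorithm at a fixed vertex $v$, organised by the degree $d$ of $v$ at the moment it is selected --- recalling that once $v$ is selected the algorithm stays on $v$ through its recursive calls until $v$ has no copies left, so no other vertex touches $v$ meanwhile. The quantity to track is the out-degree the algorithm assigns to $v$; $v$ is tight exactly when this equals two. First I would record the bookkeeping for the three kinds of steps the algorithm can take at $v$: a case-$1$ FR I step contributes exactly one to $v$'s out-degree (the revealed edge or self-loop is oriented away from $v$); a case-$3$ FR I step contributes zero (its edge is left unoriented), while still consuming one or two copies of $v$; and a case-$2$ FR II step at $v$ consumes two copies of $v$ and contributes exactly one to its out-degree, whether it produces a self-loop (oriented arbitrarily, which still adds exactly one) or splits an edge $i\to j$ into the oriented edges $i\to v$ and $v\to j$. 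A self-loop is the only way two copies of $v$ are consumed for a single unit of out-degree, and this is the source of the ``$-1$'' behind case \textbf{L3}.

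Next I would trace the processing of $v$. If $d\ge 4$ the algorithm takes case-$3$ FR I steps, each adding nothing to $v$'s out-degree, until $v$'s degree drops below $4$; since an FR I step applied to a vertex of degree $\ge 4$ leaves it with degree $\ge 2$, this first drop lands on degree $2$ or $3$. If $v$ is selected with degree $3$ (initially, or after the case-$3$ shedding), the case-$2$ FR II step adds one to its out-degree and leaves exactly one copy of $v$, which is handled by case $1$ and adds a second, so $v$ is tight. If $v$ is selected with degree $2$, the case-$1$ FR I step adds one; if that step reveals a self-loop then $v$ is out of copies with out-degree one and is loose --- and this is exactly \textbf{L3}; otherwise one copy remains, another case-$1$ step adds a second, and $v$ is tight. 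If $v$ is selected with degree $1$, a single case-$1$ FR I step (necessarily not a self-loop) leaves $v$ loose with out-degree one; this can only be $v$'s \emph{first} selection, because any later degree-$1$ selection is a recursive call that follows a case-$2$ step or a non-self-loop case-$1$ step which has already given $v$ out-degree one, so there $v$ finishes tight --- hence this case is exactly \textbf{L2}. The only remaining way for $v$ to be loose is that $v$ is never selected, its copies all having been consumed by neighbours' FR moves; since the algorithm halts with $\mathcal{A}_s$ empty, every vertex is either processed or falls here, which is \textbf{L1}.

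The bookkeeping of the first paragraph is routine; the point that needs care is ensuring the recursive structure cannot produce a loose vertex outside \textbf{L1}--\textbf{L3}. Concretely one must verify (i) that a case-$2$ FR II step always contributes exactly one unit of out-degree to $v$ even when the edge it splits was itself previously left unoriented --- otherwise $\Omega(n)$ degree-$3$ vertices could be loose, which is incompatible with the later quantitative bound; (ii) that a degree-$2$ step revealing a self-loop is \textbf{L3} regardless of whether $v$ reached degree $2$ directly or through case-$3$ shedding from higher degree; and (iii) that the degree-$1$ recursive call following a case-$2$ or non-self-loop case-$1$ step always occurs after $v$ has accumulated exactly one unit of out-degree, so that it finishes with exactly two. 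All three follow by applying the step-bookkeeping along every branch of the recursion, and together they complete the case analysis.
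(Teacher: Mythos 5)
Correct, and essentially the same case analysis the paper gives: the paper compresses your out-degree bookkeeping into a citation of \lemref{henneberg} (step~1 without a self-loop is a Henneberg~I, step~2 is a Henneberg~II or creates a self-loop, and step~3 cannot leave $v$ with fewer than two copies), but the content is identical. Your points of care (i)--(iii) are likewise absorbed into that Henneberg citation in the paper's version.
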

\begin{proof}
The proof is a case analysis.
\begin{itemize}
\item In step 1, if a self-loop is not generated, this is equivalent to the Henneberg I move,
so by \lemref{henneberg} $v$ ends up being tight.
\item Step 2 always corresponds to a Henneberg II move or creates a self-loop, and so by \lemref{henneberg}
$v$ ends up being tight either way, and the out-degree of no other vertex changes.
\item Step 3 cannot leave $v$ with fewer than two copies.
\item The other cases are the ones in the statement of the lemma, completing the proof.
\end{itemize}
\end{proof}

Because of the algorithm's recursive nature, we can, at any time, suspend the algorithm and
just generate a uniform matching on the remaining configuration.  In the next section,
we will use this observation to split the analysis into two parts.
\begin{lemma}\label{lem:suspend}
Let $\mathcal{A}_s|_{t}$ denote the remaining configuration at time $t$.
Suppose that, w.h.p., a random configuration $G^*$ on $\mathcal{A}_s|_{t}$ is $(2,0)$-spanning.
Then, w.h.p., there is an orientation of $G_{\mathcal{A}_s}$ in which all the
vertices of $\mathcal{A}_s|_{t}$ are tight, and the out-degrees of vertices in
$\mathcal{A}_s\setminus \mathcal{A}_s|_t$ are the same as in the full algorithm.
\end{lemma}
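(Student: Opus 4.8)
The plan is to combine the recursive structure of the $2$-orientation algorithm with the orientability facts from \secref{rigid-prelim}, chiefly \lemref{mapsparse}, \lemref{sparseorientable}, and \lemref{henneberg}. First I would use \lemref{algorithm-correctness} together with the recursive definition of the FR moves to ``suspend'' the algorithm at time~$t$: the edges revealed and oriented before time~$t$ are a deterministic function of the history up to~$t$, and, conditioned on that history, the not-yet-revealed part of the matching is a uniformly random perfect matching on $\mathcal{A}_s|_{t}$. So $G_{\mathcal{A}_s}$ may be coupled with the union of the already-revealed edges and an independent random configuration $G^*$ on $\mathcal{A}_s|_{t}$. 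Since a vertex is deleted from $\mathcal{A}_s$ only after all of its copies have been used, every edge of $G^*$ lies entirely inside $\mathcal{A}_s|_{t}$, and each processed vertex has all of its incident edges already revealed.

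Next I would orient the tail. By hypothesis $G^*$ is $(2,0)$-spanning w.h.p.; fix such an outcome and a $(2,0)$-tight spanning subgraph $S \subseteq G^*$. By \lemref{mapsparse} (or \lemref{sparseorientable}), $S$ is a maximal $2$-orientable graph, hence admits an orientation in which every out-degree is at most two; since $|E(S)| = 2|V(S)|$, every vertex of $S$ --- that is, every vertex of $\mathcal{A}_s|_{t}$ --- in fact has out-degree exactly two, and so is tight. The edges of $G^*$ outside $S$, the edges left unoriented by Step~3, and any self-loops are simply kept out of the oriented subgraph and contribute nothing.

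It then remains to reinstate the processed vertices, which I would do in the reverse of the order in which the algorithm removed them, extending the orientation one vertex at a time. A vertex removed by Step~1 without a self-loop was added by a Henneberg~I move with both new edges oriented out of it; a vertex removed by Step~2 without a self-loop was added by a Henneberg~II move, splitting some already-oriented edge $i\to j$ into $i\to v$, $v\to j$ and then adding a Step~1 edge $v\to k$ out of~$v$. In both cases the argument from the proof of \lemref{henneberg} applies verbatim: the move leaves $v$ with out-degree exactly two, changes no other vertex's out-degree (for Henneberg~II the tail $i$ merely points at $v$ instead of~$j$), and in particular never gives a vertex of $\mathcal{A}_s|_{t}$ a new out-edge. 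The remaining cases --- Step~3, together with the self-loop branches of Steps~1 and~2, which by \lemref{loose-vertices} are exactly the ways a processed vertex fails to be made tight --- only reveal edges the algorithm leaves unoriented, and these stay outside the oriented subgraph. Running over the whole reverse sequence, every vertex of $\mathcal{A}_s|_{t}$ stays tight, and every processed vertex ends with exactly the out-degree the full algorithm assigned it.

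The main obstacle is the bookkeeping in this last step: one has to check that reinstating the processed vertices in reverse order really does break up, move by move, into (inverse) Henneberg~I/II steps on the current remaining graph, so that \lemref{henneberg}'s argument applies at each stage, and that a Step~2 move never has to ``split'' an edge whose direction has not yet been fixed in a way that would cost some vertex a third out-edge. This is where one uses that the algorithm processes each chosen vertex to completion, always selects a current minimum-degree vertex, and only ever orients edges \emph{away from} the vertex being processed --- so the only out-degrees ever modified are those of vertices that are already finished and tight. Granting this, the construction goes through on the w.h.p.\ event that $G^*$ is $(2,0)$-spanning, which proves the lemma.
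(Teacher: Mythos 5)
Your proposal is correct and follows essentially the same route as the paper: both rely on the cut-off algorithm yielding a uniform matching on $\mathcal{A}_s|_t$, orienting a $(2,0)$-tight spanning subgraph of $G^*$ so that every vertex of $\mathcal{A}_s|_t$ is tight, and then invoking the Henneberg argument of \lemref{henneberg} to show that the pending FR~II completions (and FR~I reveals) do not disturb any out-degree in $G^*$ or in the already-processed part. The only caveat worth noting is that the FR~II split orients $i\to v$ \emph{into} the processed vertex $v$, so the phrase ``only ever orients edges away from the vertex being processed'' is slightly imprecise; what actually matters (and what both you and the paper use) is that the split replaces $i\to j$ by $i\to v$, preserving the out-degree of $i$ exactly.
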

\begin{proof}
Since $G^*$ comes from a uniform matching on the remaining copies, and the FR II move
only requires this as input, the cut-off version of the $2$-orientation algorithm
generates a uniform matching.  Thus, w.h.p., results for it hold for $G_{\mathcal{A}_s}$.

By hypothesis, w.h.p., we can orient $G^*$ such that each vertex has out-degree at least two: $G^*$
has a subgraph that is $(2,0)$-spanning and orient the remaining edges arbitrarily, so all
the vertices of $G^*$ are tight, w.h.p.  Moreover, since before $G^*$ is generated, the cut-off
algorithm acts the same way as the full algorithm, which implies that how we orient the edges of $G^*$
does not change the out-degrees in $\mathcal{A}_s\setminus \mathcal{A}_s|_t$.

So the final thing to check is that the FR I and FR II moves don't change the out-degrees in
$G^*$ after the recursive calls return to them.
\begin{itemize}
\item For FR I, no edges induced by $G^*$ are involved, so the statement is trivial.
\item For FR II, either a self-loop is generated, in which case the proof is the same as for FR I; or
an edge is split, in which case the construction used to prove \lemref{henneberg} shows that the
out-degree of vertices in $G^*$ remains unchanged.
\end{itemize}
This completes the proof.
\end{proof}

\paragraph{The Simplified Algorithm.} In light of \lemref{loose-vertices}, we can simplify the analysis
by simply tracking how the degree sequence evolves as we remove copies from $\mathcal{A}_s$.  Since
loose vertices are generated only when a vertex runs out of copies before the algorithm had a chance
to orient two edges out of them, we can ignore the edges and just remove copies as follows.

\alg{Simplified $2$-orientation algorithm}{
Repeatedly execute the following on a vertex $v$ of minimum degree in $\mathcal{A}_s$, until $\mathcal{A}_s$ is empty
\begin{enumerate}
\item \label{step:2} If $deg_{\mathcal{A}_s}(v) \leq 2$, repeat the loop until all copies of $v$ are removed\\
\hspace*{5mm} a. remove a copy of $v$ from $\mathcal{A}_s$. \\
\hspace*{5mm} b. remove a copy chosen uniformly at random from $\mathcal{A}_s$.
\item \label{step:3} If $deg_{\mathcal{A}_s}(v) =3$, first remove the $2$ copies of $v$ from $\mathcal{A}_s$; then execute step~\ref{step:2}.
\item \label{step:i} If $deg_{\mathcal{A}_s}(v) \geq 4$, repeat the following loop $deg_{\mathcal{A}_s}(v)-3$ times and then execute step~\ref{step:3} \\
\hspace*{5mm} a. remove a copy of $v$ from $\mathcal{A}_s$. \\
\hspace*{5mm} b. remove a copy chosen uniformly at random \hspace*{9.5mm} from $\mathcal{A}_s$.
\end{enumerate}
}

As a corollary to \lemref{loose-vertices} we have,
\begin{lemma}\label{lem:simplified-algorithm}
The number of loose vertices generated by the simplified algorithm is exactly the number
of vertices that run out of copies as in L$1$--L$3$ in \lemref{loose-vertices}
\end{lemma}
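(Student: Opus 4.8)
The plan is to show that the simplified $2$-orientation algorithm is, as far as the multiset of active copies is concerned, the same process as the full $2$-orientation algorithm, and then to read off the statement from \lemref{loose-vertices}. Concretely, I would build a coupling of the two algorithms under which they select the same minimum-degree vertex at every step (use the same arbitrary tie-breaking rule in both) and delete the same copies of $\mathcal{A}_s$, driven by the same random choices, at every step. This is a short case check against the three branches. In the branch where $v$ has degree at most two, the full algorithm repeatedly executes FR~I on $v$, and each FR~I deletes the deterministically chosen copy of $v$ together with a uniformly random copy of the remainder; this is exactly steps (a)--(b) of the first branch of the simplified algorithm, iterated until $v$ is exhausted. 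In the branch where $v$ has degree three, the full algorithm executes one FR~II move on two copies of $v$, which — viewed purely as an operation on the active copy set — simply deletes those two copies of $v$ (the eventual fate of this pair, a self-loop on $v$ or a splice of an existing edge, is decided later and is irrelevant to the active copy set), and then recurses into the ``degree at most two'' branch on the last copy of $v$; this matches ``remove the two copies of $v$, then execute the degree-at-most-two branch'' in the simplified algorithm. In the branch where $v$ has degree at least four, both algorithms strip $deg_{\mathcal{A}_s}(v)-3$ copies of $v$ by the same FR~I/step-(a)-(b) pair deletions and then drop into the degree-three branch. In each branch the full algorithm additionally records edge structure and orientations, but none of this affects which copies remain active, so the coupling is consistent; in particular the degree sequence of $\mathcal{A}_s$ evolves identically in the two algorithms.

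Under this coupling, each of the conditions L$1$, L$2$, L$3$ of \lemref{loose-vertices} is a function of the common copy-removal history alone: L$1$ says $v$ is never pulled as a minimum-degree vertex; L$2$ says $v$ has exactly one copy when it is first pulled; and L$3$ says $v$ has exactly two copies when it is first pulled \emph{and} the first uniformly random copy deleted at that moment (step (b), i.e.\ the FR~I partner of $v$) is the other copy of $v$, so that a self-loop is revealed. All three events are therefore registered identically by the simplified algorithm, and I would declare a vertex of the simplified algorithm \emph{loose} precisely when it runs out of copies according to one of L$1$--L$3$. With that definition, the set of loose vertices of the simplified algorithm equals, under the coupling, the set of vertices exhibiting L$1$--L$3$ in the full algorithm, which by \lemref{loose-vertices} is exactly the set of loose vertices of the full algorithm; counting gives the claim.

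The only point needing genuine care — and hence the main (minor) obstacle — is the bookkeeping in the degree-three / FR~II branch: one must check that replacing the FR~II move by ``delete the two copies of $v$'' does not perturb the active copy multiset relative to the simplified algorithm, which holds because the splice of an existing edge happens entirely inside the not-yet-generated remainder of the matching and touches only copies that have already left the active set; and that after FR~II the recursion re-enters with $v$ still a current minimum-degree vertex, which is guaranteed since FR~II lowers $v$'s degree by two and lowers no other degree (cf.\ the proof of \lemref{algorithm-correctness}). Once this is verified, the statement is an immediate corollary of \lemref{loose-vertices}.
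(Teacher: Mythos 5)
Your proposal is correct and follows essentially the same route as the paper's one-line proof, which simply observes that the copy-removal history of $\mathcal{A}_s$ evolves identically under the full and simplified algorithms. You flesh this observation out into an explicit step-by-step coupling and verify that L1--L3 are functions of that common history, but there is no new idea beyond what the paper asserts.
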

\begin{proof}
The number of remaining copies of each vertex in $\mathcal{A}_s$ evolves as in the full algorithm.
\end{proof}

\section{Proof of \theoref{almost}} \label{sec:almost}

In this section we prove, \almosttheorem

\paragraph{Roadmap.}  The proof is in two stages: before the minimum numbers of copies on
any vertex remaining reaches four and after.  The overall structure of the argument
is as follows:
\begin{itemize}
\item At the start, the minimum number of copies on any vertex in $\mathcal{A}_s$
is three.  We run the simplified algorithm until the minimum degree
in $\mathcal{A}_s$ rises to four, or the number of vertices remaining reaches $\sqrt{n}$.
\item Since we only run the simplified algorithm when there are
are $\Omega(\sqrt{n})$ copies remaining, we can give bounds on the number
of loose vertices generated by analyzing it as a series of
branching processes.
\item Once the minimum degree in $\mathcal{A}_s$ has reached four, we can
use the combinatorial \lemref{mainlemma} along with the counting
argument \propref{LT} to show that, w.h.p., a random configuration on
what remains is $(2,0)$-spanning.  If it never does, we just declare the
last $\sqrt{n}$ vertices to be loose, so either way we get the desired bound.
\end{itemize}
We define the first stage, when we run the simplified algorithm, to be \emph{phase $3$}; the second stage is
\emph{phase $4$}.
The main obstacle is that during phase 3, there may be many degree two vertices, which increases the
probability of generating loose vertices.  Let us briefly sketch our approach.

At the start of phase 3, every vertex has degree at least three (and $\Omega(n)$ have degree three, w.h.p.).
The algorithm will:
\begin{itemize}
\item Pick a degree three vertex, remove all of its copies and a random copy.
\item Removing the random copy may create a degree two vertex, which is then removed, along with
two random copies.
\item These random copies may create more degree two vertices or even a loose vertex.
\end{itemize}
We call the cascade described above a \emph{round} of the algorithm.  We model each round as a
\emph{branching process}, which we analyze with a system of differential equations (this step
occupies most of the section).  The key fact is that, with high probability, all rounds process $O(\log n)$ vertices,
which is enough to bound the number of loose vertices using arguments similar to those from the
high-degree phase analysis.

\paragraph{Phase 3: The Main Lemma.}  We start with the analysis of phase $3$.  In phase $3$,
all iterations of the algorithm take step 1 or 2.  We define a \emph{round} of the algorithm in phase $3$
to start with a step 2 iteration and contain all the step 1 iterations before the next step 2.  The critical lemmas
are.
\begin{lemma}\label{lem:round-length}
With probability at least $1-1/n$, all rounds during phase $3$ process  $O(\log n)$ vertices.
\end{lemma}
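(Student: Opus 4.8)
The plan is to model one round of phase~$3$ as a branching process and to show it is subcritical. A round begins by deleting the three copies of a chosen degree-$3$ vertex together with one uniformly random copy; call the deletion of a uniformly random copy a \emph{dart}, so this first vertex fires one dart. Thereafter a dart landing on a copy of a vertex $w$ of current degree $3$ demotes $w$ to degree $2$, forcing $w$ to be processed during the round and to fire two further darts; a dart landing on a vertex of degree $\ge 4$ is wasted; and a dart landing on a copy of a degree-$\le 2$ vertex --- necessarily one created earlier in the same round --- or on a copy of the vertex currently being processed only shortens the cascade, since it further demotes a vertex that was going to be processed anyway. Hence the number of vertices processed in the round is at most $1$ plus the total progeny of a Galton--Watson process in which each dart spawns two new darts with probability $p_3 := 3V_3/M$ and none otherwise, $V_3$ being the number of remaining degree-$3$ vertices and $M$ the number of remaining copies; the deviations from exact independence (sampling without replacement, self-hits, and the $O(\log n/\sqrt n)$ drift of $p_3$ over a round) can only help and are absorbed into $o(1)$ slack.

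The crux is therefore to show that the branching parameter $2p_3$ stays bounded away from $1$ throughout phase~$3$, i.e.\ that the number of copies on degree-$3$ vertices never exceeds $(\tfrac12-\eps)M$ for some fixed $\eps>0$. This is where the differential-equation method is required. The initial degree sequence is that of the $3$-core of $G(n,c/n)$, which is asymptotically a random configuration whose degrees are Poisson conditioned to be at least $3$, with a mean that is a known function of $c$ (a standard property of cores; cf.\ \propref{PSW} and the configuration model of \secref{conf}); a direct computation gives $3V_3/M\approx 1/3$ at the start when $c$ is near $c_2$, and this initial value decreases as $c$ grows. One then sets up a system of ODEs for the scaled degree counts $v_j(x)=V_j/n$, with time $x$ equal to the scaled number of remaining copies, whose drift per round has two contributions --- the deterministic deletion of one degree-$3$ vertex's copies, and the cascade of darts, each hitting a degree-$j$ vertex with probability $jv_j/x$ and demoting it --- and verifies that $3v_3(x)<\tfrac12\,x$ along the whole trajectory traversed in phase~$3$. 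At the far end of phase~$3$ one has $v_3=0$, but because we have no a priori monotonicity for $p_3$, ruling out an intermediate excursion back toward $1/2$ requires genuinely analyzing (and, where closed forms are unavailable, numerically integrating) these ODEs. This is the main obstacle and the bulk of the work.

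Given $p_3\le\tfrac12-\eps$ throughout, the rest is routine. Run a round up to the stopping time at which it either terminates or has processed $C\log n$ vertices, for a constant $C$ to be fixed; up to that time the dart process is dominated by a Galton--Watson process with offspring $2\cdot\mathrm{Bernoulli}(\tfrac12-\tfrac\eps2)$, which is subcritical and so has total progeny with exponentially decaying tails. Choosing $C$ large enough in terms of $\eps$, a fixed round therefore processes more than $C\log n$ vertices with probability at most $n^{-2}$; since phase~$3$ comprises at most $n$ rounds, a union bound gives that all rounds process $O(\log n)$ vertices with probability at least $1-1/n$. Finally, Wormald's differential-equation theorem legitimizes passing from the ODE solution to the required w.h.p.\ bound on $V_3/M$: its hypotheses are met because each round perturbs the degree sequence by only $O(\log n)$ copies, well within the method's tolerances.
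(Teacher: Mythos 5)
Your proposal matches the paper's argument: both model each phase-3 round as a branching process on the ``darts'' (the random-copy removals), identify the branching ratio $\lambda = 2\cdot 3V_3/M = 6a^{(3)}/\mu$, establish subcriticality of this ratio throughout phase~3 via Wormald's differential-equation method applied to the evolving truncated-Poisson degree sequence, and then conclude via exponential tail bounds for subcritical Galton--Watson total progeny plus a union bound over the at-most-$n$ rounds. The paper in fact obtains a closed-form expression for $\lambda(s)$ (solving $\ddelta/\ds = -\delta$ and integrating for $a^{(3)}$) rather than resorting to numerical integration, and it asserts (without showing the calculation) that $\lambda$ is monotone decreasing on $(0,s^*)$ --- a point you rightly flag as requiring genuine verification; otherwise your outline is essentially the paper's, with some additional care about the coupling/domination details that the paper elides.
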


We defer the proofs for now, and instead show how they yield a bound on the number of loose
vertices.

\begin{lemma}\label{lem:dense-phase-3}
With high probability, the expected number of loose vertices generated in a phase $3$ round is $O(\log^2 n/\sqrt{n})$.
\end{lemma}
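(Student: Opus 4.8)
The plan is to combine the round-length bound (\lemref{round-length}) with an estimate of the per-step probability that a loose vertex is created during phase~$3$. Recall from \lemref{loose-vertices} that loose vertices arise in only three ways (L1--L3), and that in phase~$3$ every round begins by deleting the three copies of a degree-three vertex and then cascades through degree-two vertices, each time deleting one copy deterministically and one copy uniformly at random. A loose vertex is produced in a round precisely when one of the random copy deletions hits a vertex of current degree one or two in $\mathcal{A}_s$ (case L2 or L3), or when a vertex is never processed before running out of copies (L1); the L1 case is subsumed since such a vertex must have had a random copy removed while at degree one. So it suffices to bound the expected number of random deletions in a round that land on a low-degree vertex.

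First I would condition on the high-probability event from \lemref{round-length} that every phase~$3$ round processes $O(\log n)$ vertices; on this event a round performs $O(\log n)$ random copy deletions. For a single random deletion, the copy chosen uniformly at random from $\mathcal{A}_s$ lands on a vertex of degree one or two with probability proportional to the number of such copies remaining, divided by the total number of remaining copies. Since we only run the simplified algorithm while $\Omega(\sqrt n)$ vertices (hence $\Omega(\sqrt n)$ copies) remain, the denominator is $\Omega(\sqrt n)$; a branching-process / differential-equations bound on the degree sequence (the analysis developed in the rest of the section, controlling the number of degree-two vertices present at any time) shows the number of degree-$\le 2$ copies at any point of phase~$3$ is $O(\log n \cdot \sqrt n)$ — indeed, each round creates only $O(\log n)$ degree-two vertices and they are consumed within the same round — so each random deletion creates a loose vertex with probability $O(\log n/\sqrt n)$. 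Multiplying by the $O(\log n)$ random deletions per round gives expected $O(\log^2 n/\sqrt n)$ loose vertices per round, which is the claim.

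The main obstacle is the middle step: controlling, with high probability throughout phase~$3$, the number of low-degree (degree one or two) copies present in $\mathcal{A}_s$ at the moment of each random deletion, so that the per-deletion loose-vertex probability is genuinely $O(\log n/\sqrt n)$ rather than merely $o(1)$. This is exactly where the branching-process model of a round and its differential-equation analysis are needed: one must show that degree-two vertices do not accumulate across rounds (each round clears the degree-two vertices it creates) and that a round's cascade stays subcritical, so the instantaneous count of degree-$\le 2$ copies stays $O(\log n\sqrt n)$. Once that structural fact is in hand, the lemma is a short union-bound-and-linearity-of-expectation argument: the bad event of \lemref{round-length} contributes $o(1/\sqrt n)$ and may be absorbed, and on the good event the expected count is $O(\log^2 n/\sqrt n)$ by the computation above.
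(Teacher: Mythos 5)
Your overall approach matches the paper's: condition on \lemref{round-length} so that each round processes $O(\log n)$ vertices, argue that each of the $O(\log n)$ random copy deletions in a round creates a loose vertex with probability $O(\log n/\sqrt{n})$, and multiply. The paper decomposes per processed \emph{vertex} (each of the $O(\log n)$ non-root vertices in the round has already been hit once, and the extra hit or self-loop needed to make it loose occurs with probability $O(1/\sqrt{n})$ per iteration, union-bounded over $O(\log n)$ iterations), whereas you decompose per random \emph{deletion}; these are the same double sum iterated in different orders, and both give $O(\log^2 n/\sqrt n)$.

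There is, however, an arithmetic slip in your middle step that as written breaks the argument: you claim the number of degree-$\le 2$ copies at any instant is $O(\log n \cdot \sqrt n)$ and the denominator is $\Omega(\sqrt n)$, which would give a per-deletion probability of $O(\log n)$, not $O(\log n/\sqrt n)$. Your own parenthetical (``each round creates only $O(\log n)$ degree-two vertices and they are consumed within the same round'') shows the correct count is $O(\log n)$, not $O(\log n\sqrt n)$; with that correction the arithmetic $O(\log n)/\Omega(\sqrt n) = O(\log n/\sqrt n)$ goes through. Also note that this $O(\log n)$ bound on the queue of low-degree vertices follows directly from \lemref{round-length} (subcriticality implies short rounds implies a small queue); invoking the differential-equations machinery for it separately is not wrong but is indirect --- the paper avoids this detour by bounding the per-vertex probability rather than the per-deletion probability, which sidesteps any need to count degree-$\le 2$ copies at all.
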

\begin{proof}
All vertices start the round with at least three copies on them, so \lemref{loose-vertices} implies that
a loose vertex is either
\begin{itemize}
\item hit at least twice at random (L1 and L2)
\item reaches degree two and then gets a self-loop (L3)
\end{itemize}

Both of these events happen with probability $O(1/\sqrt{n})$, since we are only running the algorithm
with this many vertices left, and the
probability any specific vertex is hit is $O(1/k)$, where $k$ is the number of remaining
copies, and w.h.p. this is $\Omega(n)$ during any phase 3 round.

Since the round lasts $O(\log n)$ iterations, w.h.p.,
the probability any vertex becomes loose is at most $O(\log n/\sqrt{n})$.  Because any
loose vertex generated in a round must be processed in that round, the expected number of
loose vertices generated is $O(\log n)O(\log n/\sqrt{n})=O(\log^2 n/\sqrt{n})$.
\end{proof}

\begin{lemma}\label{lem:low-degree}
With high probability, the number of loose vertices generated during phase $3$ is $O(\log^3 n\sqrt{n})$.
\end{lemma}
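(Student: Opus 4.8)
The plan is to assemble \lemref{round-length} and \lemref{dense-phase-3} with a count of the number of rounds and a Markov-inequality argument; the extra logarithmic factor in the statement drops out of this last step.

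First I would bound the number of rounds in phase~$3$ by $n$. By definition, each phase~$3$ round begins with a single step~$2$ iteration, performed on a current minimum-degree vertex $v$ whose degree is three. That iteration removes two copies of $v$ and then, via the immediate step~$1$ continuation on $v$, removes the last copy of $v$; so a step~$2$ iteration permanently removes one vertex from $\mathcal{A}_s$, and distinct rounds begin on distinct vertices. Hence the total number of rounds in phase~$3$ is at most $n$ (even counting a possibly truncated final round).

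Next, let $\mathcal{G}$ be the (w.h.p.) event on which both \lemref{round-length} holds --- every round processes $O(\log n)$ vertices --- and the typicality conditions behind \lemref{dense-phase-3} hold, so that conditionally the expected number of loose vertices created in any one phase~$3$ round is $O(\log^2 n/\sqrt{n})$; then $\Pr[\mathcal{G}] = 1 - o(1)$. Writing $X$ for the total number of loose vertices generated in phase~$3$, and noting that each loose vertex is charged to the unique round in which it is processed or runs out of copies (as in the accounting of \lemref{dense-phase-3}), linearity of expectation together with the bound on the number of rounds gives
\[
\E[X \mid \mathcal{G}] \;\le\; n \cdot O(\log^2 n/\sqrt{n}) \;=\; O(\sqrt{n}\,\log^2 n).
\]

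Finally I would apply Markov's inequality with a slack of $\log n$:
\[
\Pr[X > \sqrt{n}\,\log^3 n \mid \mathcal{G}] \;=\; O(1/\log n) \;=\; o(1),
\]
and since conditioning on $\mathcal{G}$ changes any probability by at most $\Pr[\overline{\mathcal{G}}] = o(1)$, we conclude that $X = O(\sqrt{n}\,\log^3 n)$ w.h.p., which is the claim. The one point that needs care --- and the reason \lemref{dense-phase-3} is phrased as an expectation bound holding with high probability over the random configuration rather than as a deterministic per-round bound --- is that the conditioning must be a \emph{single} event valid simultaneously for all $O(n)$ rounds, so that linearity of expectation can be applied without a union bound inflating the estimate; this is exactly what forces the final bound to carry $\log^3 n$ rather than $\log^2 n$.
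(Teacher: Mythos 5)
Your proof is correct and follows essentially the same route as the paper's: condition on \lemref{round-length} holding, apply \lemref{dense-phase-3} to the $O(n)$ rounds to get $\E[X] = O(\sqrt{n}\log^2 n)$, and then apply Markov's inequality with a $\log n$ slack factor to obtain $O(\sqrt{n}\log^3 n)$ with probability $1-O(1/\log n)$. Your explicit argument that each round consumes one degree-three vertex (hence at most $n$ rounds) is a nice sharpening of the paper's informal ``$O(n)$ rounds''; the only detail you omit is that when phase~3 terminates at the $\sqrt{n}$-vertex cutoff, those remaining vertices are declared loose, contributing an additional $O(\sqrt{n})$ loose vertices --- but this is dominated by the main term and does not affect the bound.
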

\begin{proof}
Let $\Gamma$ be the event that all rounds process $O(\log n)$ vertices.  The main  \lemref{round-length}
implies that $\Gamma$ fails to hold with probability $1/n$, so we are done if the lemma holds when conditioning
on $\Gamma$.

Assuming, $\Gamma$, \lemref{dense-phase-3} applies to $O(n)$ rounds.  From
\lemref{dense-phase-3}, we see that, w.h.p., the expected number of loose vertices generated in
phase $3$ is $O(n)O(\log^2/\sqrt{n})=O(\log^2 n\sqrt{n})$.  It then follows from Markov's inequality that the probability
of $\Omega(\log^3 n\sqrt{n})$ loose vertices generated in phase $3$ is at most $1/\log n=o(1)$.

If phase $3$
runs until it hits its cutoff point, then there are an additional $O(\sqrt{n})$ loose vertices, but this preserves the desired bound.
\end{proof}

\paragraph{Phase 4.} At the start of phase $4$, we stop the simplified algorithm, since we can
prove the following lemma more directly.
\begin{lemma}\label{lem:high-degree}
With high probability, at the start of phase $4$, a uniform simple graph generated from $\mathcal{A}_s$
is $(2,0)$-spanning.
\end{lemma}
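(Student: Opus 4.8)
The plan is to reduce ``$(2,0)$-spanning'' to a Hall-type degree condition and then read that condition off from the fact that, at the start of phase~$4$, the simplified algorithm has raised the minimum degree to four. Recall the matching argument used to prove \lemref{mapsparse}: a graph $H$ on a vertex set $W$ contains a $(2,0)$-tight spanning subgraph --- equivalently, an orientation of some subset of its edges in which \emph{every} vertex has out-degree exactly two --- if and only if the bipartite graph with one class consisting of two copies of each vertex of $W$ and the other class equal to $E(H)$ (a copy of $w$ adjacent to every edge through $w$) has a matching that saturates the class of vertex copies. By Hall's theorem, and since adding the second copy of a vertex already represented in a set can only make Hall's condition harder, this is equivalent to the condition that $e_H(S)\ge 2|S|$ for every $S\subseteq W$, where $e_H(S)$ denotes the number of edges of $H$ incident to $S$. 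So it suffices to verify this inequality for the simple graph produced at the start of phase~$4$.

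There are two situations at that point: either phase~$3$ ended because the minimum degree in the remaining configuration $\mathcal{A}_s|_t$ rose to four, or it was stopped because only $\sqrt{n}$ vertices were left. In the first situation the conclusion is in fact deterministic. Conditioning a configuration on simplicity leaves every vertex's degree equal to its number of copies, so every vertex of the resulting simple graph $G^*$ has degree at least four. Fix $S\subseteq V(G^*)$ and double count: writing $e(S,S)$ and $e(S,\overline{S})$ for the numbers of edges inside $S$ and leaving $S$, we have $2\,e_{G^*}(S)=2e(S,S)+2e(S,\overline{S})=\sum_{v\in S}\deg(v)+e(S,\overline{S})\ge 4|S|$, hence $e_{G^*}(S)\ge 2|S|$. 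Thus the Hall condition holds for every $S$, and $G^*$ is $(2,0)$-spanning. (One could instead route through rigidity: $G^*$ has at least $2n'$ edges, so it is not Laman-sparse and coincides with its $(3+2)$-core; \lemref{circuits}, \lemref{component-3plus2core} and \lemref{mainlemma}, with \propref{LT} excluding the second alternative of \lemref{mainlemma}, then show $G^*$ is Laman-spanning, and adding any three surplus edges to a Laman basis keeps it $(2,0)$-sparse while bringing the edge count to $2n'$ --- making it $(2,0)$-spanning. The direct count above is shorter.)

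For the second situation we do not assert $(2,0)$-spanning at all: we simply declare the at most $\sqrt{n}$ remaining vertices loose, which contributes only $O(\sqrt{n})=O(\log^3 n\,\sqrt{n})$ extra loose vertices and so is absorbed into \theoref{almost} just as the overflow term is at the end of the proof of \lemref{low-degree}; \lemref{suspend} is then applied with $t$ at the phase-$3$/phase-$4$ boundary. The main obstacle here is organizational rather than mathematical: one must confirm that ``minimum degree four'' is what ends phase~$3$ in the first situation (it is, because phase~$3$ only ever processes vertices of degree at most three), that the leftover configuration still has an asymptotically sub-Poisson --- hence graphical --- degree sequence, so that the transfer principles of \secref{conf} and the uniform-simple-graph model make sense (true because the $3$-core has maximum degree $O(\log n)$ w.h.p.\ and copies are only ever removed), and that the two situations are exhaustive. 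Once the reduction to the Hall count is in place, there is no probabilistic content left in the first situation.
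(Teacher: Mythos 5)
Your proof is correct, and it is genuinely simpler than the paper's. Where the paper goes through the rigidity machinery — apply \lemref{mainlemma}, rule out the second alternative w.h.p.\ via \propref{LT}, then pass from Laman-spanning to $(2,0)$-spanning via the main theorem of \cite{maps} — you instead observe that the transversal/Hall argument already implicit in the proof sketch of \lemref{mapsparse} reduces ``$(2,0)$-spanning'' to the condition $e(S)\ge 2|S|$ for every vertex set $S$ (edges counted with incidence to $S$), and that this follows from minimum degree four by the two-line double count $2e(S)=\sum_{v\in S}\deg(v)+e(S,\overline S)\ge 4|S|$. This is in fact a strictly stronger conclusion than the paper proves: once phase~$3$ has driven the minimum remaining degree to four, the configuration's simple realizations are $(2,0)$-spanning \emph{deterministically}, so the only probability in the lemma is over whether phase~$3$ reaches that point before the $\sqrt n$ cutoff, which both you and the paper absorb by declaring the at most $\sqrt n$ leftover vertices loose. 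What the paper's longer route buys is the intermediate fact that $G^*$ is Laman-spanning, which is pleasant to know (and connected to the first conjecture in \secref{conclusions}) but unnecessary for \lemref{high-degree} or its use in \lemref{suspend}; your argument also avoids any appeal to \propref{LT} at this stage. One small caveat worth stating explicitly if you wrote this up: your double count uses that the graph is simple (so that degree equals number of configuration copies, no self-loops double-counted), which is exactly the conditioning the lemma allows.
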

\begin{proof}
Since the degree sequence of $\mathcal{A}_s$ has minimum degree $4$ and is truncated Poisson, a simple
graph $G^*$ generated by $\mathcal{A}_s$ is asymptotically equivalent to the $4$-core of some $G(n,c'/n)$.
By edge counts, \lemref{mainlemma} applies to it, and the conclusion in which it is not Laman-spanning is
ruled out, w.h.p., by \propref{LT}.  Since $G^*$ is a Laman graph plus at least three more edges, the main
theorem of \cite{maps} implies that $G^*$ is $(2,0)$-spanning.
\end{proof}

\paragraph{Proof of \theoref{almost}}
Because we are interested in results on $G(n,c/n)$, we condition on the event that
$G_{\mathcal{A}_s}$ is simple. By \lemref{low-degree}, w.h.p., phase $3$ generates $O(\log^3 n\sqrt{n})$ loose vertices.
By \lemref{high-degree} (which applies when $G_{\mathcal{A}_s}$ is simple), w.h.p., we can apply \lemref{suspend} to
conclude that there is an orientation with no loose vertices that were not generated during phase~$3$.
\hfill $\qed$

\paragraph{Further Remarks.} The assumption that $G_{\mathcal{A}_s}$ is simple can be removed
at the notational cost of introducing $(2,0)$-blocks and components and proving the appropriate generalization
of \lemref{mainlemma} to that setting.  Since the added generality doesn't help us here, we leave this
to the reader.

In the rest of this section we prove \lemref{round-length}.

\paragraph{The Branching Process.}

Consider a round in phase $3$.  Its associated branching process is defined as follows:
\begin{itemize}
\item The root vertex of the process is the degree three vertex processed to start the round.
\item The children of any vertex $v$ processed in the round are any degree one or two vertices created
while processing $v$.
\end{itemize}
\figref{loose-example} shows an example of the process, and how a loose vertex is generated.
\begin{figure}[htbp]
\centering
\includegraphics[width=0.35\textwidth]{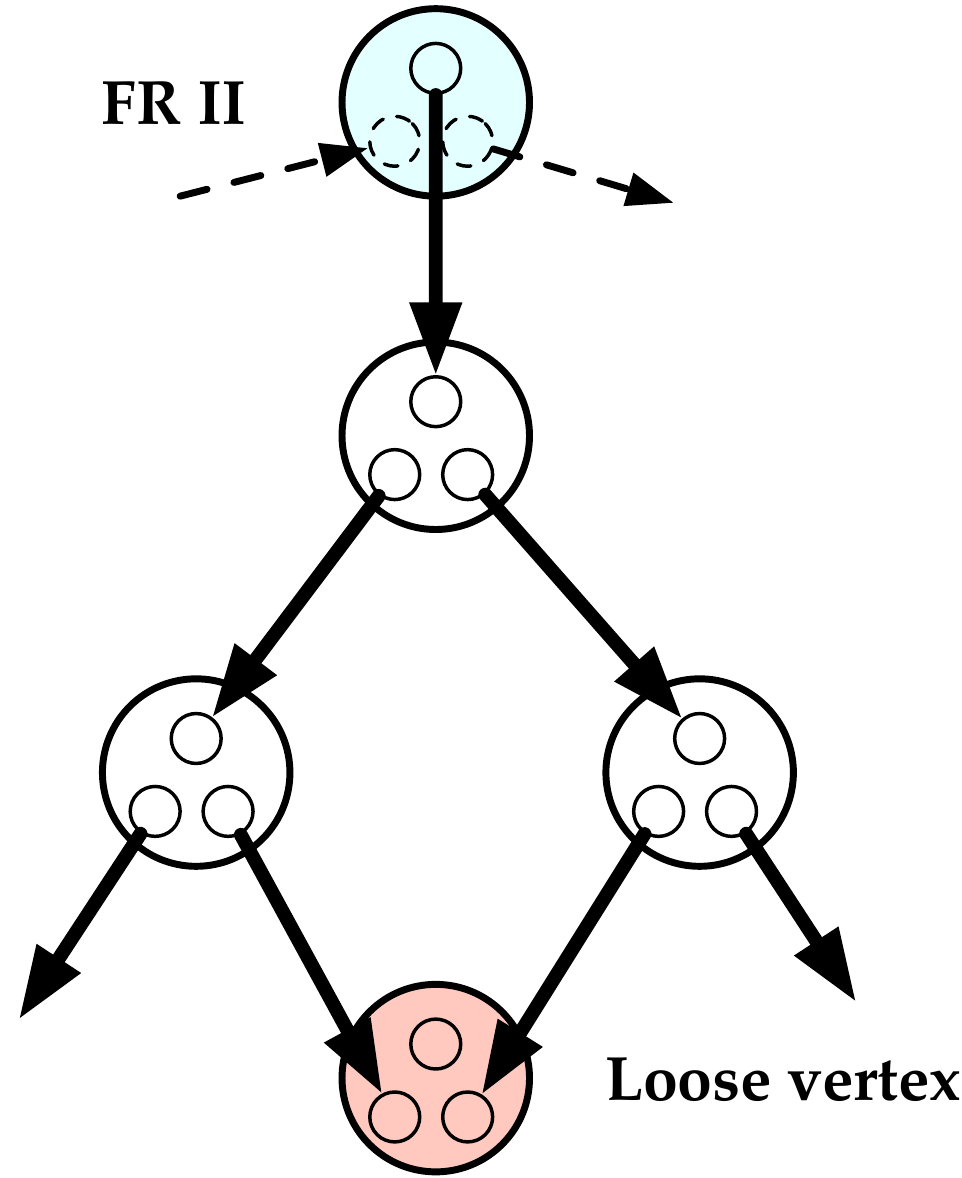}
\caption{Example of the Phase $3$ branching process.  A loose vertex is generated when it is hit twice by a
previous generation.}
\label{fig:loose-example}
\end{figure}
We define $\lambda$ to be the expected number of children.  We will show,
\begin{lemma}\label{lem:subcritical}
With high probability, $\lambda < 1$ in all phase $3$ rounds.
\end{lemma}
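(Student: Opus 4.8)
The plan is to set up a system of differential equations that tracks the expected evolution of the degree sequence of $\mathcal{A}_s$ during a phase $3$ round, and then to read off $\lambda$ from the drift of the number of degree-two vertices. Parametrize the configuration by the fraction of copies remaining (or equivalently a continuous ``time'' variable $t$), and let $y_2(t), y_3(t), y_j(t)$ denote the (rescaled) number of vertices of each degree $j\ge 2$ remaining, with $y_j(0)$ coming from the truncated-Poisson degree sequence of the $3$-core (which is controlled by \propref{PSW}). When a copy is removed uniformly at random, the probability it lands on a given vertex is proportional to its current degree, so the standard configuration-model calculation gives the expected change in each $y_j$ per random copy-removal: a vertex of degree $j$ is hit with probability $j y_j / \sum_i i y_i$, moving mass from $y_j$ to $y_{j-1}$. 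Processing a degree-three vertex removes its two remaining copies (after the first was taken deterministically) plus the random copies that cascade; the cascade is exactly the branching process in the lemma statement. I would write the expected-drift equations for the $y_j$ and use Wormald's differential-equation method (the ``differential equation method for random graph processes'') to conclude that w.h.p. the actual trajectory stays within $o(n)$ of the solution throughout phase $3$, i.e., as long as $\Omega(\sqrt n)$ copies remain.

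Next I would express $\lambda$, the expected number of children in the branching process, in terms of these quantities. Each random copy removed during a round hits a degree-two vertex (thereby creating a new degree-one/degree-two child that must itself be processed) with probability $2 y_2(t)/\sum_i i y_i(t)$; processing that child removes two more random copies, each of which is again a potential branching event. So the per-child expected number of new children is roughly $\lambda(t) = 2 \cdot \bigl(2 y_2(t) / \sum_i i y_i(t)\bigr)$, up to lower-order corrections from self-loops and from hitting the just-processed vertex. The content of the lemma is therefore the analytic claim that the solution of the differential equations satisfies $2 y_2(t) < \tfrac12 \sum_i i y_i(t)$ — equivalently, degree-two vertices never carry too large a share of the copies — for all $t$ up to the phase-$3$ cutoff. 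I would prove this by establishing an invariant: show that the quantity $\sum_i i y_i(t) - 4 y_2(t)$ (or a similar linear combination chosen so its derivative has a favorable sign) stays bounded away from zero, using the definition of $c_2$ as the supremum of $c$ for which the $3$-core has average degree at most $4$. Since we are in the regime $c > c_2$ but processing vertices only *decreases* the average degree, and the algorithm preferentially removes low-degree vertices, the average degree of what remains should stay above (or controllably near) $4$ while $y_2$ stays small relative to the total; this is exactly the slack that forces $\lambda < 1$.

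The main obstacle I expect is the analytic control of the differential equations near the phase-$3$ cutoff and the verification that the subcriticality invariant genuinely holds along the whole trajectory rather than just initially. Two subtleties make this delicate: (i) the system is infinite-dimensional (one variable per degree), so I would need to argue that the Poisson tail of $y_j$ for large $j$ remains negligible and truncate cleanly, or work directly with a generating-function encoding of the degree distribution and track a small number of functionals of it; and (ii) the relationship between the number of copies removed and the branching-process generations must be made precise enough that the error terms are genuinely $o(1)$ in $\lambda$, not merely $O(1)$ — this is where the $\Omega(\sqrt n)$ lower bound on remaining copies is used, since it makes ``hit the same vertex twice'' and ``self-loop'' events contribute only $O(1/\sqrt n)$ to the drift. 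Granting the invariant, \lemref{round-length} follows from \lemref{subcritical} by the standard fact that a branching process with offspring mean $\lambda < 1$ bounded away from $1$ has total progeny $O(\log n)$ with probability $1 - O(1/n^2)$, and a union bound over the $O(n)$ rounds.
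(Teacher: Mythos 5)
You have the right high-level plan (Wormald's differential equation method, track the rescaled degree sequence, read off $\lambda$, show it stays below $1$), and your remarks about why $\Omega(\sqrt n)$ remaining copies make self-loop and double-hit errors negligible, as well as how \lemref{round-length} would then follow, are correct.

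However, there is a genuine gap in the central formula. You write $\lambda(t) \approx 2\cdot\bigl(2y_2(t)/\sum_i i y_i(t)\bigr)$, treating ``a random copy hits a degree-two vertex'' as the branching event. But at the start of every round (and at every time the ODE method sees) the minimum degree in $\mathcal{A}_s$ is at least $3$; degree-two vertices exist only transiently inside a round and there are at most $O(\log n)$ of them, so $y_2 = o(1)$ in the ODE scaling and your proposed $\lambda$ would be vacuously near zero. The actual branching event is a random copy removal hitting a degree-\emph{three} vertex, which drops it to degree two, making it a child that must be processed (consuming two more random copies). The correct branching ratio is $\lambda = 6a^{(3)}/\mu$, where $a^{(3)}$ is the rescaled count of degree-three vertices and $\mu = \sum_{i\ge 3} i\,a^{(i)}$. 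Your proposed invariant $\sum_i i y_i - 4y_2 > 0$ is therefore the wrong quantity; what must be controlled is $\mu - 6a^{(3)}$. Note also that this is not implied by the $3$-core having average degree $\ge 4$: that only gives $\lambda \le 3/2$, so the heuristic ``$c_2$ is defined by average degree $4$, hence slack'' does not close the argument by itself.

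The other missing ingredient is the structural observation that makes the infinite ODE system tractable: under the simplified algorithm the degree distribution for $i\ge 4$ remains exactly truncated Poisson with a time-varying parameter $\delta$, and changing time variables to $s$ with $\mathrm{d}\delta/\mathrm{d}s = -\delta$ gives $\delta = \delta_0 \e^{-s}$, which collapses everything above degree $3$ to a single scalar and lets one integrate the $a^{(3)}$ equation in closed form. One then checks $\lambda(0) = \e^{-\tau}\tau^2/\bigl(1-\e^{-\tau}(1+\tau)\bigr) < 1$, which needs $\tau = \delta_0 > 1.794$ (and at $c = c_2$ one already has $\tau \approx 2.688$), together with monotone decrease of $\lambda(s)$ on phase $3$. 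Your suggestion to work with a generating-function encoding is in the right spirit, but it has to deliver exactly this Poisson-ansatz reduction; as stated, the proposal does not identify the correct $\lambda$ or the correct quantity to bound.
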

This implies the key \lemref{round-length}.
\begin{proof}[Proof of \lemref{round-length}]
If $\lambda < 1$ in all rounds, then standard results on branching processes imply that the probability
any particular round processes more than $O(\log n)$ vertices is $O(1/n^2)$, for appropriate choices
of constants.  A union bound then implies that the probability all of them process $O(\log n)$ vertices is at least
$1-1/n$.

This assumption on $\lambda$ holds w.h.p. by \lemref{subcritical}, completing the proof.
\end{proof}

\paragraph{Analysis of the Branching Process: Proof of \lemref{subcritical}.} We will use the method of
differential equations \cite{W95} to establish \lemref{subcritical}.  All the required Lipchitz conditions
and tail bounds in our process are easy to check, since the degree sequence is asymptotically Poisson.  Thus
the main step is to define and analyze the system of differential equations describing the evolution of the
degree sequence as the algorithm runs.

In order to simplify the analysis, we break the loop into individual {\em timesteps}. At each timestep, two vertex copies are removed from $\mathcal{A}_s$. For example, the step~\ref{step:2} of the algorithm is divided into $deg_{\mathcal{A}_s}(v)$ many timesteps, where in each timestep we remove a copy of $v$ and another random copy. Similarly, step~\ref{step:i} of the algorithm gets divided into $deg_{\mathcal{A}_s}(v)-3$ many timesteps.

For all $i \ge 3$, let $a^{(i)}$ denote the number of vertices of degree $i$ in the $3$-core divided by $n$ (where $n$ is the number of vertices in $G=G(n,c/n)$). We say a vertex is {\em hit} during a round in phase 3 if a copy of the vertex is selected by step~\ref{step:2}(b) of the algorithm during the round. Since in phase $3$ vertices of degree $4$ and greater get hit randomly with probability proportional to their degree, they always obey a truncated Poisson  distribution. Therefore, for $i \ge 4$, $a^{(i)}$ will be a truncated Poisson distribution with a time-varying mean $\delta$,
\begin{equation}
\label{eq:trunc-poi}
a^{(i)} = \frac{\e^{-\delta} \,\delta^i}{i!} \,.
\end{equation}
Let $\mu$ denote the total number of vertex copies in the $3$-core divided by $n$. That is,
\[
\begin{split}
\mu
= \sum_{i \ge 3} i a^{(i)}
= 3 a^{(3)} + \sum_{i \ge 4} \frac{\e^{-\delta} \,\delta^i}{(i-1)!} \\
= 3 a^{(3)} + \delta \left( 1-\e^{-\delta} \left(1+\delta+\frac{\delta^2}{2} \right) \right) \, .
\end{split}
\]
We will use the following notation for the number of copies of degree $j$ or greater divided by $n$, $\mu^{\ge j} = \sum_{i=j}^\infty i a^{(i)} \, . $

Since an edge hits a vertex of degree $3$ with probability $3a^{(3)}/\mu$, and since it creates two new edges when it does so, the branching ratio of the
branching process defined above is
$
\lambda = 6a^{(3)}/\mu.
$
To show that the branching process is subcritical we need to analyze $\lambda$ and show that throughout phase $3$ it is bounded away from $1$.

Now, in one timestep the expected change in $a^{(i)}$ ($i \geq 4$) is
$$n \cdot \E[\Delta a^{(i)}] =  \frac{(i+1)a^{(i+1)}}{\mu} - \frac{ia^{(i)}}{\mu} \,.$$
Here, the first term on the right hand side represents the probability that a degree $i+1$ vertex is hit (this creates a new degree $i$ vertex). The second term represents the probability that a degree $i$ vertex is hit (this destroys a degree $i$ vertex).  Similarly, the expected change in $a^{(3)}$ in one timestep is
$$n \cdot \E[\Delta a^{(3)}] = \frac{4a^{(4)}}{\mu} - \frac{3a^{(3)}}{\mu} - (1 - \lambda) \,.$$
Here, the additional $1-\lambda$ term comes due to the FR II step. The expected probability on a given timestep that there are no degree-$2$ vertices is $1-\lambda$.

Setting the derivatives of these variables to their expectations gives us our differential equations:

\[
n \cdot \frac{\da^{(3)}}{\dt}
= \frac{-3a^{(3)} + 4a^{(4)}}{\mu} - (1-\lambda)
\]
and, for all $i\ge 4$
\[
n \cdot \frac{\da^{(i)}}{\dt}
= \frac{-i a^{(i)} + (i+1) a^{(i+1)} }{\mu}
\]
Changing the variable of integration to $s$ where $\ds/\dt = 1/(n \mu)$ and substituting $\lambda =6a^{(3)}/\mu$ gives
\begin{align}
\frac{\da^{(3)}}{\ds}
&= -3a^{(3)} + 4a^{(4)} - \mu + 6a^{(3)} = -\mu^{\ge 5} \label{eq:da3ds} \\
\frac{\da^{(i)}}{\ds}
&= -i a^{(i)} + (i+1) a^{(i+1)} \quad \mbox{for all $i \ge 4$} \label{eq:daids} \, .
\end{align}
This infinite system is consistent with the truncated Poisson form for $a^{(i)}$.  Since~\eqref{eq:trunc-poi} implies
\[
\frac{\da^{(i)}}{\ds} = \left( \frac{i}{\delta} - 1 \right) a^{(i)} \,\frac{\ddelta}{\ds}
\; \;
\mbox{ and } \; \;
a^{(i+1)} = \frac{\delta}{i+1} \, a^{(i)} \, ,
\]
Equation~\eqref{eq:daids} becomes
\[
\left( \frac{i}{\delta} - 1 \right) a^{(i)} \,\frac{\ddelta}{\ds}
= ( -i + \delta ) a^{(i)}
\; \; \mbox{ or } \;\; \frac{\ddelta}{\ds} = -\delta  \, .\]

Thus, $\delta$ as a function of $s$ is, $\delta = \delta_0 \,\e^{-s}$. Here, $\delta_0$ is the parameter of the truncated Poisson distribution describing the degree distribution of the $3$-core.  We can also express $\mu$ as a  function of $s$. Since $\lambda=6a^{(3)}/\mu$, we can also express $\lambda$ as a function of $s$. Then we show using analytic arguments that  $\lambda < 1$ for all values of $s$ in phase $3$.

We start by computing the integral,
\[
a^{(3)} =
a^{(3)}_0
- \int_0^s \mu^{\ge 5} \,\ds,
\]
analytically. Since $\frac{\ddelta}{\ds} = -\delta$, we have $\ds = \frac{-\ddelta}{\delta}$. Let $\delta_0=\tau$. Remember, $\delta_0$ is the parameter of the truncated Poisson distribution describing the degree distribution of the $3$-core. Now, $\mu^{\geq 5}$ can be expressed as
$$ \mu^{\geq 5} = \sum_{i=5}^{\infty} \frac{\e^{-\delta} \delta^i}{(i-1)!} = \delta \left (1 - \e^{-\delta} \left (1+\delta+ \frac{\delta^2}{2} + \frac{\delta^3}{6} \right ) \right ).$$ We can compute $a^{(3)}$ as
\begin{align*}
\lefteqn{a^{(3)} } \\ & = a^{(3)}_0
- \int_0^s \mu^{\ge 5} \, \ds   =   a^{(3)}_0 + \\ & \int_{\tau}^{\tau \e^{-s}} \delta  \left (1 - \e^{-\delta} \left (1+\delta+ \frac{\delta^2}{2} + \frac{\delta^3}{6} \right ) \right ) \frac{\ddelta}{\delta} \\
& =  a^{(3)}_0 + \int_{\tau}^{\tau \e^{-s}} \left (1 - \e^{-\delta} \left (1+\delta+ \frac{\delta^2}{2} + \frac{\delta^3}{6} \right ) \right ) \ddelta \\
& = a^{(3)}_0 + \tau\e^{-s} + \e^{-\tau\e^{-s}} \left ( \frac{\tau^3\e^{-3s}}{6}+\tau^2\e^{-2s} + 3\tau\e^{-s}+4 \right ) \\ & - \left ( \tau+ \frac{\tau^3\e^{-\tau}}{6}+\tau^2\e^{-\tau}+3\tau\e^{-\tau}+4\e^{-\tau} \right ). \end{align*}
Now, $a^{(3)}_0 = \e^{-\tau} \tau^3 /6$. Therefore, $a^{(3)}$ as a function of $s$ is
\begin{align*} \label{eqn:lhs} \lefteqn{a^{(3)}} \\  &  = \frac{\e^{-\tau}\tau^3}{6}+\tau\e^{-s} + \e^{-\tau\e^{-s}}( \frac{\tau^3\e^{-3s}}{6}+\tau^2\e^{-2s} + 3\tau\e^{-s}+4 )  \\ &  - \left ( \tau+ \frac{\tau^3\e^{-\tau}}{6}+\tau^2\e^{-\tau}+3\tau\e^{-\tau}+4\e^{-\tau} \right ). \end{align*}
Since both $a^{(3)}$ and $\delta$ can be expressed as a function of $s$, we can write
\begin{align*}
\lefteqn{\mu} \\
& =3 a^{(3)} + \delta \left( 1-\e^{-\delta} \left(1+\delta+\frac{\delta^2}{2} \right) \right) \\
& = 3\left ( \frac{\e^{-\tau}\tau^3}{6}+\tau\e^{-s}  \right ) \\
& + 3 \left ( \e^{-\tau\e^{-s}} \left ( \frac{\tau^3\e^{-3s}}{6}+\tau^2\e^{-2s} + 3\tau\e^{-s}+4 \right ) \right ) \\
& - 3 \left ( \tau+ \frac{\tau^3\e^{-\tau}}{6}+\tau^2\e^{-\tau}+3\tau\e^{-\tau}+4\e^{-\tau} \right )\\
& + \tau\e^{-s} \left ( 1 - \e^{-\tau\e^{-s}} \left ( 1 + \tau\e^{-s}+\frac{\tau^2\e^{-2s}}{2} \right ) \right ),
\end{align*}
as a function of $s$. Now, given $a^{(3)}$ and $\mu$ as functions of $s$, we can compute $\lambda=6a^{(3)}/\mu$ as a function of $s$. We need to show that $\lambda < 1$ for all values of $s \geq 0$ in phase $3$. Let $s^*$ be the value of $s$ at which $a^{(3)}$ (as a function of $s$) evaluates to $0$ (this signals the end of phase 3). By taking derivatives, it can be shown that $\lambda$ is a decreasing function of $s$ in the interval $(0,s^*)$ (we omit this calculation here). So, all left to verify is whether at $s=0$, $\lambda < 1$. Evaluating $\lambda$ at $s=0$, we get
$$\lambda|_{s=0} =  \frac{6 \cdot \frac{\e^{-\tau}\tau^3}{6}}{\frac{3\e^{-\tau}\tau^3}{6} + \tau - \e^{-\tau}(\tau+\tau^2+\frac{\tau^3}{2})} = \frac{\e^{-\tau}\tau^2}{1-\e^{-\tau}(1+\tau)}.$$
For $\tau> 1.794$, $\lambda|_{s=0} < 1$. At the birth of the giant rigid component $(c=c_2 \approx 3.58804)$, $\tau = 2.688$\footnote{Remember, $\tau (= \delta_0)$ is the parameter of the truncated Poisson distribution describing the degree distribution of the $3$-core. The value of $\tau$ equals $qc$ where $q=1-\e^{-qc}(1+qc)$ is the fraction of the vertices in the $3+2$-core of $G$ (see, e.g.,~\cite{PSW}). For $c=3.58804$, we get  $q=0.749154$ and $\tau = 2.688$.}, and as we increase $c$, the value of $\tau$ only increases. Therefore, for all $c> c_2$, $\lambda|_{s=0} < 1$. As, $\lambda$ is a decreasing function of $s$, therefore for $c> c_2$, $\lambda < 1$ throughput phase $3$. This shows that for $c > c_2$ all branching processes in phase $3$ are subcritical.

\hfill $\qed$

\section{Conclusions} \label{sec:conclusions}

We studied the emergence of rigid components in sparse Erd\H{o}s-R\'enyi random graphs, proving that there
is a sharp threshold and a quantitative bound on the size of the rigid component when it emerges.  These
results confirm theoretically the simulations of \cite{rivoire2006exactly}.

As conjectures, we leave the following:

\begin{Conjecture}
With high probability, the entire $(3+2)$-core is Laman-spanning in $G(n,c/n)$, for $c>c_2$.
\end{Conjecture}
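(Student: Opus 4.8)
The plan is to reduce the conjecture to a statement purely about the $3$-core and then try to push the analysis behind \theoref{almost} all the way down to \emph{no} exceptional vertices. For the reduction, it suffices to show that, w.h.p., the $3$-core of $G(n,c/n)$ is itself Laman-spanning. Indeed, the $(3+2)$-core is obtained from the $3$-core by repeatedly adjoining a vertex $v$ with at least two neighbours in the subgraph built so far; if that subgraph contains a rigid block $B$ with $v$'s two neighbours in $B$, then a Laman basis of $B$ together with the two edges at $v$ is a Laman graph on $|B|+1$ vertices (the count $2|B|-3$ becomes $2(|B|+1)-3$, and hereditary $(2,3)$-sparsity is preserved because $v$ has degree two there), so $B\cup\{v\}$ is again a rigid block. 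Starting this induction from a single rigid block equal to the whole $3$-core, every subsequently adjoined vertex is absorbed, so the entire $(3+2)$-core becomes one rigid block, i.e.\ Laman-spanning. (Conversely, since for $c>c_2$ the $3$-core is over-determined, \lemref{mainlemma} applied to the $3$-core, together with \propref{LT} and \lemref{giant-component-unique}, comes close to forcing the leftover of the giant block to be empty, so the two statements are essentially equivalent; but only the direction above is needed.)

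The core task is therefore to strengthen \theoref{almost} to have \emph{zero} loss on the $3$-core. The $\sqrt n$ in its statement has two sources: the cutoff of the simplified algorithm at $\sqrt n$ remaining vertices, and the per-round $O(1/\sqrt n)$ probabilities of generating a loose vertex once only $\Omega(\sqrt n)$ copies remain. I would re-run the branching-process analysis of \secref{almost} with the cutoff lowered to $\mathrm{polylog}\,n$ remaining vertices: the differential equations \eqref{eq:da3ds}--\eqref{eq:daids} and the explicit computation $\lambda|_{s=0}=\e^{-\tau}\tau^{2}/(1-\e^{-\tau}(1+\tau))<1$ for $\tau>1.794$, together with monotonicity of $\lambda$ in $s$, keep every phase-$3$ round subcritical for $c>c_2$, so \lemref{round-length} still gives $O(\log n)$-length rounds down to polylog scale. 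The genuinely new work is (i) justifying Wormald's method in this extended range, where the relative error of the fluid limit degrades, and (ii) handling the residual configuration on $\mathrm{polylog}\,n$ vertices directly, showing it is w.h.p.\ $(2,0)$-spanning (hence Laman-spanning) or else small enough that \lemref{mainlemma} and \propref{LT} finish it off, and then feeding this back through \lemref{suspend}.

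Any loose vertices that survive this sharper analysis would then have to be shown to be absorbed into the giant rigid block anyway. A surviving exceptional vertex in the $3$-core has degree at least three there, and \lemref{blocks-combine} (three edges between two blocks, or a shared vertex plus one edge) attaches it to the giant block whenever its neighbourhood is not pathological; a union bound over the $O(\mathrm{polylog}\,n)$ such vertices would suffice. The subtle point is that in the configuration model the L3 self-loop events alone contribute $\Theta(\log n)$ loose vertices in expectation, and the transfer from the configuration model to the simple $G(n,c/n)$ model used throughout \secref{conf} is lossy; I would expect this to force either working in the simple model directly (e.g.\ via edge switchings) or a sharper coupling.

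The main obstacle is exactly the breakdown of the differential-equation method in the final $o(n)$ steps of the simplified algorithm: the present proof sidesteps it by declaring the last $\sqrt n$ vertices loose, and removing that crutch requires controlling the degree sequence and the rare degree-$\le 2$ cascades when only polylogarithmically many copies remain, a regime where individual random pairings dominate and Wormald's method yields nothing. This is precisely the gap that keeps the statement a conjecture: one essentially needs to show that the exceptional set of \theoref{almost} is not merely $o(n)$ but empty w.h.p., which seems to demand a fine understanding of the joint law of the last few matched pairs that is not currently available.
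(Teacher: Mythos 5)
This statement is left as a \emph{Conjecture} in the paper; there is no proof of it to compare against, and the authors say so explicitly in the remarks following the conjectures (``Proving either of these conjectures using the plan presented here would most likely require a stronger statement than \theoref{almost}, such as the $3$-core being, w.h.p., $(2,0)$-spanning''). Your proposal does not claim to close this gap, and indeed ends by correctly identifying exactly why it remains open. So the honest assessment is: you have not proved the conjecture, but you were not supposed to be able to, and your diagnosis of the difficulty agrees with the authors'.

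Two things are worth flagging. First, your opening reduction --- that it suffices to show the $3$-core is Laman-spanning, because each vertex adjoined in building the $(3+2)$-core arrives with two neighbours in the current rigid block and is absorbed by a Henneberg~I move --- is clean, correct, and cleaner than the branching-process argument in \lemref{3p2-core}, which handles the weaker hypothesis that only a $(1-o(1))$-fraction of the $3$-core is in the giant block; under your stronger inductive hypothesis that the whole $3$-core is one block, the tree/height argument of \lemref{3p2-core} is not needed. Second, your analysis of the $\sqrt n$ loss is accurate: both the cutoff in the simplified algorithm and the $O(1/\sqrt n)$ per-round self-loop/double-hit probabilities contribute, and pushing Wormald's method down to $\mathrm{polylog}\,n$ remaining copies (where the fluid limit degrades) plus handling the residual configuration directly is precisely the hard part the paper sidesteps by declaring the last $\sqrt n$ vertices loose. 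Your remark that exceptional L3 self-loop events contribute $\Theta(\log n)$ loose vertices in expectation in the configuration model, and that the transfer to the simple graph is lossy, is a real concern; the paper's own suggested strengthening (the $3$-core being $(2,0)$-spanning) would, via \lemref{mainlemma} and \propref{LT}, yield Laman-spanning of the $3$-core and hence, by your reduction, of the $(3+2)$-core, but the authors do not know how to prove it either. In short: correct reduction, correct identification of the obstruction, and an appropriately honest conclusion that the statement remains a conjecture.
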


\begin{Conjecture} With high probability, the $3$-core of $G(n,c/n)$ is \emph{globally rigid} (i.e., there is \emph{exactly one} embedding of the framework's edge lengths, as opposed to a discrete set), for $c>c_2$.
\end{Conjecture}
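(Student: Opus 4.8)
The plan is to reduce the conjecture to the standard combinatorial characterization of generic global rigidity in the plane, and then to a strengthening of \theoref{main}. Recall the Connelly--Hendrickson--Jackson--Jord\'an theory (see \cite{C05,servatius:random:2008}): a generic framework on a graph with at least four vertices is globally rigid in $\mathbb{R}^2$ if and only if its graph is $3$-connected and \emph{redundantly rigid}, i.e.\ remains rigid (Laman-spanning) after deletion of any single edge. Write $G_0=(V_0,E_0)$ for the $3$-core of $G(n,c/n)$; by \propref{PSW} it has $\Omega(n)$ vertices w.h.p.\ for $c>c_2$, in particular at least four. So it suffices to prove, w.h.p.: \textbf{(i)} $G_0$ is $3$-connected, and \textbf{(ii)} $G_0-e$ is Laman-spanning for every $e\in E_0$.

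For \textbf{(i)}, $G_0$ has minimum degree $3$ by definition, so only vertex cuts of size one or two are at issue. A cut $\{u,v\}$ with small side $C$ of size $\ell$ forces $G_0[C\cup\{u,v\}]$ to span at least $3\ell/2$ edges on $\ell+2$ vertices; for constant $\ell$ this is a finite case check (such a $C$ would have to be an induced near-$K_4$ all of whose edges avoid $V_0\setminus(C\cup\{u,v\})$, which has probability $o(1)$ by a first-moment count), for $\ell=\omega(1)$ with $\ell=o(n)$ the first moment gives $o(1)$ directly since then $3\ell/2>\ell+2$, and for $\ell=\Omega(n)$ a linear $2$-cut would be a sparse linear cut of the $3$-core, ruled out by the edge-counting ideas behind \propref{LT}. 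Equivalently, one may invoke the known fact that the $k$-core of $G(n,c/n)$ is $k$-connected w.h.p.\ once $c$ is bounded away from the $k$-core threshold, which applies since $c>c_2>\lambda_3$.

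For \textbf{(ii)} I would first promote \theoref{almost}/\theoref{main} to the statement that the \emph{entire} $3$-core is Laman-spanning w.h.p.\ --- a form of the other conjecture above (that the $(3+2)$-core is Laman-spanning) restricted to $G_0$. This follows if the number of loose vertices produced by the $2$-orientation algorithm inside the $3$-core, currently $O(\log^3 n\sqrt n)$, can be driven to $0$: refine the phase-$3$ branching-process analysis of \secref{almost} to an $o(1)$ expected loose count, then argue the $O(\sqrt n)$ phase-$4$ boundary vertices are absorbed into the giant block using the surplus of $G_0$ (average degree $>4$ gives $|E_0|>2n_0$, far above the Laman threshold $2n_0-3$), with \propref{LT} ruling out the ``extra component'' alternative of \lemref{mainlemma}. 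Granting $G_0$ rigid, redundancy splits on the endpoint degrees of $e=uv$. If some endpoint $v$ has degree exactly $3$, then $v$ has degree $2$ in $G_0-e$, and by \lemref{peel-degree-two} together with the Henneberg~I argument inside the proof of \lemref{henneberg}, $G_0-e$ is Laman-spanning iff $G_0-v$ is; and $G_0-v$ is again a dense near-$4$-regular graph to which the strengthened theorem applies. If both endpoints of $e$ have degree $\ge 4$, then $G_0-e$ still has minimum degree $\ge 3$ and $>2n_0-3$ edges, so \lemref{circuits} applies, and combining \lemref{mainlemma} with \propref{LT} as in the proof of \lemref{giant-component-size} shows $G_0-e$ is Laman-spanning. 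Equivalently, $e$ fails to be redundant only if it is a coloop of the rigidity matroid of $G_0$, which the $\Omega(n)$ randomly placed surplus edges prevent, via a first-moment argument that no edge lies outside every rigid circuit.

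The crux is step \textbf{(ii)}, and within it the genuinely new work relative to this paper: closing the $O(\log^3 n\sqrt n)$ gap in \theoref{almost} to obtain the $3$-core fully rigid with no exceptional vertices, and then upgrading ``$G_0$ rigid'' to ``$G_0-e$ rigid for all $e$'' uniformly over the $\Theta(n)$ edges. The degree-$3$ case is clean via \lemref{peel-degree-two}; the higher-degree case needs either a sharpened branching-process analysis applied to $G_0-e$ uniformly in $e$, or a structural proof that the dense rigidity matroid of $G_0$ has no coloops. By comparison, step \textbf{(i)} I expect to be routine.
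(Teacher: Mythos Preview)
The statement you are attempting is a \emph{conjecture} in the paper, not a theorem; the paper gives no proof. Immediately after stating it, the authors note that the relevant characterization is $3$-connectivity plus redundant rigidity, and that ``proving either of these conjectures using the plan presented here would most likely require a stronger statement than \theoref{almost}, such as the $3$-core being, w.h.p., $(2,0)$-spanning.'' Your proposal identifies exactly this route, which is consistent with the paper's own remarks, but it does not close the gap the paper leaves open.

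The genuine gap is in your step \textbf{(ii)}. You explicitly state that you would ``promote \theoref{almost}/\theoref{main} to the statement that the entire $3$-core is Laman-spanning w.h.p.'' and that this ``follows if the number of loose vertices \ldots\ can be driven to $0$.'' But you give no mechanism for driving the $O(\log^3 n\sqrt{n})$ loose-vertex bound to zero; the phrase ``refine the phase-$3$ branching-process analysis of \secref{almost} to an $o(1)$ expected loose count'' is a hope, not an argument. This is precisely the content of the paper's first conjecture (restricted to the $3$-core), which the authors leave open. Your redundancy argument then compounds the problem: in the degree-$3$ case you apply ``the strengthened theorem'' to $G_0-v$, but $G_0-v$ is not the $3$-core of any $G(n,c'/n)$, so even the hypothetical strengthened theorem does not apply directly; in the degree-$\ge 4$ case you need \lemref{mainlemma} and \propref{LT} to hold for $G_0-e$ uniformly over $\Theta(n)$ edges $e$, and you have not established the hypothesis of \lemref{mainlemma} (which requires a giant rigid component already present in $G_0-e$) without first assuming $G_0$ itself is Laman-spanning. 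In short, your proposal is a correct outline of what would need to be shown, matching the paper's own diagnosis, but the central step---eliminating the $o(n)$ exceptional vertices in \theoref{almost}---remains an open problem, and you have not supplied a proof of it.
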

In the plane, generic global rigidity is characterized by the framework's graph being $3$-connected and remaining Laman-spanning
if any edge is removed \cite{C05}. Proving either of these conjectures using the plan presented here
would most likely require a stronger statement than \theoref{almost}, such
as the $3$-core being, w.h.p., $(2,0)$-spanning.
\begin{Conjecture}
With high probability, the size of the $(3+2)$-core when it emerges is given by $qn$, where $q$
is the root of the equation $q = 1-\e^{-qc} (1+qc)$.
\end{Conjecture}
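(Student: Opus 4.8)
The plan is to compute the limiting fraction $q$ of vertices in the $(3+2)$-core by the objective (local weak convergence) method, viewing the $(3+2)$-core as the outcome of a ``two-neighbour bootstrap'' percolation seeded at the $3$-core, and then to upgrade this to a w.h.p.\ statement about the size of the $(3+2)$-core by a differential-equation analysis of a two-phase peeling process in the spirit of \secref{almost}. For $c<\lambda_3$ the $(3+2)$-core is empty and $q=0$, so assume $c>\lambda_3$, where it is, w.h.p., giant.

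First I would set up the local recursion. The local weak limit of $G(n,c/n)$ is a Galton--Watson tree $T$ with offspring distribution $Po(c)$. Orient each edge away from the root; for a directed edge $u\to w$, say $w$ \emph{supports} $u$ if $w$ stays in the $(3+2)$-core after deleting $uw$. In the limit, $3$-core membership is itself given by the standard $k$-core / core-message characterization (Proposition~\ref{prop:PSW} and the references therein), and ``support'' satisfies the recursion: $w$ supports $u$ iff $w$ is in the $3$-core, or at least two children of $w$ support $w$. The key collapse is that a $3$-core vertex has at least three incident core-active edges, hence at least two that go to children which themselves support it; so the recursion reduces to ``$w$ supports $u$ iff at least two children of $w$ support $w$.'' Letting $\rho$ be the probability that a uniform random directed edge has a supporting child-endpoint and using Poisson thinning (a $Po(c)$ number of children, each supporting independently with probability $\rho$ in the limit) gives $\rho=\psi_2(c\rho)$. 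The same collapse at the root shows the root lies in the $(3+2)$-core iff at least two of its neighbours support it, so the limiting density is $q=\psi_2(c\rho)$. Hence $q=\rho$ and $q$ solves $q=\psi_2(cq)=1-\e^{-cq}(1+cq)$, the claimed equation.

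Next I would identify which root is attained. Since $q=0$ is always a solution, one must show the bootstrap lands on the largest positive root $q_+$, which by \propref{PSW} equals $\mu_3(c)/c$ with $\mu_3(c)$ the largest solution of $\mu/\psi_2(\mu)=c$ (matching the footnote identifying $\tau=qc$ with the $3$-core's truncated-Poisson parameter). Because the bootstrap is seeded at the $3$-core, $\rho\ge\psi_3(\mu_3(c))$, the limiting $3$-core density; and for $c\ge c_2$ an elementary computation shows this exceeds the smaller positive root $q_-$ of $q=\psi_2(cq)$ (at $c=c_2$ one has $\psi_3(\mu_3)\approx 0.50$ while $q_-\approx 0.33$, and the gap widens as $c$ grows). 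Since there are at most two positive roots and a fixed point $\ge\psi_3(\mu_3(c))>q_-$ must be $q_+$, we conclude $q=q_+=\mu_3(c)/c$. For $\lambda_3<c<c_2$ this crude comparison fails, and one instead needs a monotone coupling showing the finite build-up process still converges to $q_+$.

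Finally, for concentration I would run a two-phase exposure of $G(n,c/n)$: first expose the $3$-core by the usual ``peel min-degree $\le 2$'' process, whose output concentrates by \propref{PSW}; then, with the $3$-core frozen, run ``peel any non-$3$-core vertex of current degree $\le 1$'' on all of $G(n,c/n)$, tracking the degree sequence of the remaining graph, refined by adjacency to the frozen set, with the differential-equation method of \cite{W95}, exactly as in \secref{almost}. The Lipschitz and boundedness hypotheses are routine since all relevant degree distributions are (truncated) Poisson, a \propref{LT}-style count rules out pathological dense spots, and the fixed point of the ODE system matches the $q$ above. I expect the main obstacle to be the rigorous handling of the nested structure rather than the algebra: the ``support'' events reference the $3$-core, which is \emph{not} a local object, so one must show these events decorrelate along disjoint subtrees in the local weak limit, and, in the finite graph, that the second peeling does not interact with the first in a way that violates the differential-equation method's hypotheses. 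The root-selection step near $c=\lambda_3$ is a secondary obstacle requiring the coupling mentioned above.
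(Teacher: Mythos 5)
The statement you are attempting is Conjecture~6.3 in the paper, and the paper offers \emph{no} proof of it: in \secref{random-prelim} the authors write explicitly that the branching-process heuristic yields $q = 1-\e^{-qc}(1+qc)$ ``however, we do not know how to make the argument rigorous, and so we leave it as a conjecture.'' There is therefore no proof in the paper to compare against, and what you have written is a research plan with self-acknowledged gaps, not a proof.

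Evaluating it on its own terms, the plan is a reasonable road-map (and matches the paper's own heuristic in \secref{random-prelim} and the footnote in \secref{almost} identifying $\tau = qc = \mu_3(c)$), but the gaps you flag are exactly where the difficulty lives, and they are not minor. First, your ``key collapse'' is circular as stated: the condition ``$w$ supports $u$ iff at least two children of $w$ support $w$'' is satisfied by the all-zero assignment, so without the $3$-core seed the recursion has no preferred fixed point; you then re-introduce the seed to break ties, but the $3$-core is a global, self-referential object, so the ``support'' events are not local functions of a bounded-radius neighbourhood in $T$, which is precisely the obstruction to a clean local-weak-limit proof. Second, your root-selection argument for $c>c_2$ is a numerical spot-check at $c=c_2$ (comparing $\psi_3(\mu_3)\approx 0.50$ to $q_-\approx 0.33$) together with the unproved assertion that ``the gap widens,'' and you concede that for $\lambda_3 < c < c_2$ a monotone coupling is needed that you do not supply---but the conjecture is stated for the $(3+2)$-core ``when it emerges,'' i.e.\ from $c>\lambda_3$ onward, so this is in scope. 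Third, the differential-equation concentration step is described but not carried out: freezing the $3$-core and then peeling the remainder requires a state space that tracks the joint degree profile relative to the frozen set, and you would have to verify the \cite{W95} hypotheses and show that the two phases interact benignly, which you yourself call the ``main obstacle.'' In short, the proposal is a credible sketch aligned with the paper's heuristic, but the three items you list as obstacles are the entire content of a proof, and none of them is resolved.
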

The term $\e^{-qc} (1+qc)$ comes from $\Pr[Po(qc) < 2]$. This conjecture, with \theoref{main}, implies that when the giant rigid component emerges it spans about $0.749n$ vertices.

\paragraph{Acknowledgements.}  We would like to thank Michael Molloy, Alexander Russell, and Lenka Zdeborov\`a for initial discussions.
We would also like to thank Daniel Fernholz and  Vijaya Ramachandran for discussions regarding~\cite{FR07}.

\end{document}